\numberwithin{equation}{section}
\newtheorem{theorem}{Theorem}[section] 
\newtheorem{example}[theorem]{Example}
\newtheorem{corollary}[theorem]{Corollary} 
\newtheorem{lemma}[theorem]{Lemma}
\newtheorem{proposition}[theorem]{Proposition}
\theoremstyle{definition} \newtheorem{definition}[theorem]{Definition}
\theoremstyle{remark} \newtheorem{remark}[theorem]{Remark}
\newcommand{\m}{{}^{-1}} \newcommand{\G}{\Gamma} 
\newcommand{\s}{\sigma}
 \newcommand{\0}{\theta} \newcommand{\de}{\delta}
 \newcommand{\af}{\alpha}
 \newcommand{\U}{{\mathcal U}}
\newcommand{\Kpar}{{K}_{\rm par}}
\newcommand{\kpar}{{\kappa }_{\rm par}}
\title[The twisted partial group algebra and (co)homology]{The twisted partial group algebra and (co)homology  of partial crossed products}
\keywords{Partial action, partial projective representation, 
twisted partial group algebra, group homology, Hochschild homology, spectral sequence}
 \subjclass[2020]{Primary 16S35, 16E40, 18G40, Secondary 16E30,  20C25, 20J05, 20J06
 } 
\author{Mikhailo Dokuchaev}
\address[Mikhailo Dokuchaev]{Departamento de Matem{\'a}tica, Universidade de S\~ao Paulo,
Rua do Mat\~ao, 1010, 05508-090 S\~ao Paulo, Brazil}
\email{dokucha@ime.usp.br}  
\author{Emmanuel Jerez}
\address[Emmanuel Jerez]{Departamento de Matem{\'a}tica, Universidade de S\~ao Paulo,
Rua do Mat\~ao, 1010, 05508-090 S\~ao Paulo, Brazil}
\email{ejerez@ime.usp.br}
\begin{document}

 \begin{abstract} Given a group $G$ and a partial factor set $\sigma $ of $G,$ we introduce the twisted partial group algebra $\kappa_{par}^{\sigma}G,$ which governs the partial projective $\s $-representations of $G$ into algebras over a filed $\kappa .$ Using the relation between partial projective representations and twisted partial actions we endow $\kpar^\sigma G$ with the structure of a crossed product by a twisted partial action of $G$ on a commutative subalgebra of $\kpar^\sigma G.$ Then, we use twisted partial group algebras to obtain a first quadrant Grothendieck spectral sequence converging to the Hochschild ho\-mo\-lo\-gy of the crossed product $A\ast_{\Theta} G,$ involving the Hochschild homology of $A$ and the partial homology of $G,$ where ${\Theta}$ is a unital twisted partial action of $G$ on a $\kappa$-algebra $A$ with a $\kappa $-based twist. An analogous third quadrant cohomological spectral sequence is also obtained.
\end{abstract}

\maketitle

 \section{Introduction}

 Partial group algebras govern partial group representations. Both concepts were initially introduced in the theory of $C^*$-algebras
 \cite{exe}, \cite{QR}, which together with the related notion of a partial group action \cite{exe} stimulated various algebraic developments. Partial actions are abundant in Mathematics. 
 In particular, a flow of a differentiable vector field on a manifold is a partial action of the additive group of the real numbers \cite{AbadieTwo}. The latter is a classical example of a local action of a Lie group in Differential Geometry, and the notion of a maximum local transformation group given by 
R. S. Palais in \cite[p. 65]{Palais} is, in fact, the concept of a partial group action formulated in the differentiable setting.

 $C^*$-algebraic early applications of partial actions and partial representations include those to algebras generated by partial isometries, such as the Cuntz-Krieger $C^*$-algebras \cite{E2}, \cite{QR}, \cite{ELQ}, the Exel-Laca $C^*$-algebras \cite{EL}, as well as to graded $C^*$-algebras \cite{E0}. The first algebraic advances were made in 
 \cite{DEP}, \cite{KL1}, \cite{St1} and \cite{St2}.
In algebra, the new concepts are useful to graded algebras, Hecke algebras, Leavitt path algebras, Steinberg algebras,
 Thompson's groups, inverse semigroups, restriction semigroups and automata (see the survey article \cite{D3}). More recent applications were obtained to $C^*$-algebras related to dynamical systems of type $(m,n)$ \cite{AraEKa}, to dynamical systems associated to separated graphs and related $C^*$-algebras \cite{AraE1}, \cite{AraL}, to paradoxical decompositions \cite{AraE1}, to shifts \cite{AraL}, to full or reduced $C^*$-algebras of $E$-unitary or strongly $E^*$-unitary inverse semigroups \cite{MiSt}, 
to topological higher-rank graphs \cite{RenWil}, to Matsumoto and Carlsen-Matsumoto $C^*$-algebras of arbitrary subshifts \cite{DE2}, to ultragraph $C^*$-algebras \cite{GR3} and to expansions of monoids in the class of two-sided restriction monoids \cite{Kud2} (see also the related earlier results in \cite{CornGould} and \cite{Kud}). Inspired by partial actions a suitable generalization of them was defined in \cite{KudLaan} and applied to Ehresmann semigroups. The latter has its origin in Differential Geometry and was introduced in \cite{Lawson1991}.

Partial actions and, more generally, twisted partial actions on $C^*$-algebras were defined in order to introduce a wider notion of a crossed product and apply it to deal with important classes of $C^*$-algebras (see \cite{E6} and \cite{D3}). Besides the remarkable $C^*$-algebraic developments a good deal of interesting algebraic research was done to study crossed products by partial actions (partial skew group rings) considering their ring theoretic properties, ideal structure, and applications 
(see \cite{D3}). As to the more general crossed products by twisted partial actions, we mention their importance for graded algebras \cite{BaMaPi}, \cite{DES1}, \cite{NysOinPin2}, for Hecke algebras \cite{E3}, the study of their ring theoretical properties \cite{BLP}, \cite{BaravCoSo}, \cite{BeCo}, \cite{BeCoFeFlo}, \cite{LaLuOiWa}, \cite{PSantA}, and the relevance of their semigroup theoretical version, which is useful for extensions related to group cohomology based on partial actions \cite{DKh2}, \cite{DKh3}. The latter was introduced in \cite{DKh} and found applications to partial projective representations and partial Galois extensions of commutative rings (see \cite{D3}). Furthermore, partial $2$-cocycles 
appeared in \cite{KennedySchafhauser} as certain obstructions in the study of the ideal structure of the reduced crossed product of a $C^*$-algebra by a global action. In addition, the cohomology from \cite{DKh} was extended from groups to groupoids in \cite{NysOinPin3} and used to 
classify the equivalence classes of certain groupoid graded rings. It also inspired the treatment of partial cohomology from the point of view of Hopf algebras \cite{BaMoTe}.

Besides the group cohomology based on partial actions, a cohomology related to partial group representations was defined in 
\cite{AlAlRePartialCohomology}. Despite the similarity in formulas and the interactions between partial actions and partial representations, more differences than similarities can be seen between the two partial cohomologies so far (see \cite{DKhS2}). The partial group cohomology in the sense of \cite{AlAlRePartialCohomology} and a similarly defined partial group homology was recently studied in \cite{MMAMDDHKPartialHomology}.

Given a unital partial action $\0 $ of a group $G$ on an algebra $A$ over a field $\kappa $ (see Definition~\ref{def:parac}), a third quadrant Grothendieck
spectral sequence was produced in \cite{AlAlRePartialCohomology} relating the Hochschild co\-ho\-mo\-lo\-gy of the partial skew group algebra $A \rtimes _{\0}G$ with the Hochschild cohomology of $A$ and the partial cohomology of $G$ (in the sense of \cite{AlAlRePartialCohomology}). Our purpose in this paper is to
take as a starting point partial projective group representations,
which allows us to deal with the more general crossed product $A\ast _ {\Theta} G,$ where ${\Theta}$ is a unital twisted partial action of $G$ on $A$ (see Definition~\ref{def:twparac}), whose twist is 
$\kappa $-based (see Definition~\ref{def:par2-cocycle}), considering both homology and cohomology.

The theory of partial projective group representations was developed in \cite{DN}, \cite{DN2}, \cite{DoNoPi}, \cite{DoSa}, with further results in \cite{NovP},  \cite{Pi5}, \cite{Pi}. 
They were defined in \cite{DN} as certain maps of the form $G\to M$, where $M$ is a $\kappa $-cancellative monoid (see Definition~\ref{def:K-smgrp}) over a field $\kappa.$ In this article it is appropriate to restrict ourselves 
to the case, in which $M$ is a $\kappa $-algebra. It is well known that given a $\kappa $-valued 2-cocycle (equivalently, a factor set) $\tau $ of a group $G,$ the usual projective $\tau$-representations of $G$ are in bijective correspondence with the algebra homomorphisms of the twisted group algebra 
$\kappa ^{\tau} G.$ Likewise, we introduce an algebra
 $\kappa_{par}^{\sigma}G$, called the twisted partial group algebra of $G$, which governs the partial projective $\sigma $-representations of $G$ for a fixed partial factor set $\sigma .$
 It generalizes the notion of the partial group algebra $\kappa_{par}G$ of $G$ (see \cite{DEP}), which controls the partial representations of $G.$

In Section~\ref{sec:ParProjRepr} we establish some notions and recall some facts on partial projective representations, and point out that we do not lose factor sets of partial projective representations when restricting ourselves to the case, in which they take values in algebras (see Remark~\ref{rem:ParFactorSets}).
The concept of the twisted partial group algebra 
$\kappa_{par}^{\sigma}G$ of $G$ for a given partial factor set $\s $ is introduced in Definition~\ref{def:ParTwiGrAlg}. In Proposition~\ref{prop:1to1corresp} we establish the universal property of $\kappa_{par}^{\sigma}G$ with respect to the partial projective $\s$-representations of $G$ in $\kappa $-algebras. For further use, we also define an involution $\rho \mapsto \rho ^*$ on the semigroup 
 $pm(G)$ of all factor sets of partial projective representations of $G$ in $\kappa$-algebras, such that
the algebras $\kappa_{par}^{\rho^*}G$ and $(\kappa_{par}^{\rho}G)^{op}$ are isomorphic, where $(\kappa_{par}^{\rho}G)^{op}$ stands for the opposite algebra of $\kappa_{par}^{\rho}G$ (see Proposition~\ref{p_left_and_right_partial_modules_equivalence}).

Section~\ref{sec:PPRvsTwistedPA} is devoted to the relation between partial projective representations and twisted partial actions. In particular, Corollary~\ref{cor:FactorSets=Twists} states that the set of all partial factor sets $pm(G)$ of partial projective representations of $G$ in $\kappa$-algebras coincides with the set of all $\kappa $-valued twists of unital partial actions of $G$ on $\kappa$-algebras. For a given $\sigma \in pm(G)$ we define the notion of a covariant $\sigma $-representation 
of a unital partial action, which is a tool to produce homomorphisms from the crossed product $ A\ast _{\0, \s} G$ into algebras, where
$\0$ is a unital partial action of $G$ on $A,$ for which $\sigma $ is a twist (see Proposition~\ref{prop:covariant}). This is a ``$\sigma$-version'' of a known tool efficiently used for partial skew group algebras \cite{E6}. We apply the obtained results in order to endow in Theorem~\ref{prop:kparCrossed} the partial skew group algebra $\kpar ^\sigma G$ with a crossed product structure of the form $B^\sigma \ast _{(\0^\sigma, \s ) } G,$ where $\0 ^\sigma $ is a unital partial action of $G$ on a commutative subalgebra $B^\sigma $ of $\kpar ^\sigma G,$ for which $\s $ is a twist. 
 
 After giving some useful facts in Section~\ref{sec:Remarks}
 on bimodules related to partial projective representations and on equivalence of partial factor sets, we establish a homological spectral sequence in Section~\ref{sec:HomolSpecSeq}, which converges the Hochschild homology of the crossed product
 $A \ast _{\0, \s } G,$ where $\0$ is a unital partial action of $G$ on a $\kappa $-algebra $A$ with twist $\s \in pm(G).$ It is done by establishing a series of preparatory facts. The functors of the form $ F_1(-)= A \otimes_{A^e}- $ and $ F(-)=A \ast _{\0, \s } G \otimes_{(A \ast _{\0, \s } G)^e}-$ come into the picture naturally, as their left derived functors compute the Hochschild homology of $A$ and 
 $A \ast _{\0, \s } G,$ respectively (here $A^e$ denotes the enveloping algebra $A\otimes A^{op}$ of $A$ and the meaning of $(A \ast _{\0, \s } G)^e$ is similar). We also consider the functor 
$ F_2(-)= B^{\sigma} \otimes_{\kappa_{par}^{\sigma''}G}-$ applied to the left $\kappa_{par}^{\sigma''}G$-modules, where 
 $\sigma ''$ is an idempotent partial factor set equivalent to $ \s \s ^*.$ One of the main steps is to prove that the functors $F_2F_1$ and $F$ are naturally isomorphic when applied to the ca\-te\-go\-ry of $A \ast _{\0, \s } G$-bimodules (see Corollary~\ref{c_F2F1F}). In Theorem~\ref{t_SSHHTor} we obtain a spectral sequence involving the functor $\operatorname{Tor}^{\kappa_{par}^{\sigma''}G}_p (B^\sigma , - )$ and converging to
 the Hochschild homology of
 $A \ast _{\0, \s } G.$ Next we relate this $\operatorname{Tor}$ functor with the partial homology of $G$ in Proposition~\ref{prop:TorSigma'Hkpar}, and the main result of the section is 
 Theorem~\ref{t_SSHH}, which for any $A \ast_{\theta,\sigma} G$-bimodule $M$ establishes 
 a first quadrant homological spectral sequence of the form
 \[
    E^2_{p,q} = H_p^{par}(G,H_q(A,M) ) \Rightarrow H_{p+q}(A \ast_{\theta, \sigma} G, M), 
 \]
 where $H_{\ast}^{par}(G,- ) $ denotes the partial homology of $G$ as defined in \cite{MMAMDDHKPartialHomology}.
 
 In Section~\ref{sec:CohomolSpecSeq} we follow dual ideas to work with cohomology. The main result is 
Theorem~\ref{t_SSCH}, which states that there exists a 
third quadrant cohomological spectral sequence of the form
\[
    E_2^{p,q} = H^p_{par}(G,H^q(A,M) ) \Rightarrow H^{p+q}(A \ast_{\theta, \sigma} G, M), 
\]
 where $M$ is an arbitrary $A \ast_{\theta,\sigma} G$-bimodule
and $H^{\ast}_{par}(G,- ) $ denotes the partial cohomology of $G$ from \cite{AlAlRePartialCohomology}. This generalizes \cite[Theorem 4.1]{AlAlRePartialCohomology}.

 
 We shall work mainly with algebras over fields, nevertheless, some concepts and results we shall state over a commutative ring. It will be convenient to adopt the following notation: in all what follows $K$ will denote an arbitrary commutative unital (associative) ring, whereas $\kappa$ will stand for an arbitrary field. Furthermore, $G$ will be an arbitrary group.
 
 \section{Partial projective representations of groups into 
 algebras}\label{sec:ParProjRepr}

 Partial projective group representations with values in $\kappa$-cancellative monoids were introduced in \cite{DN}. The latter are defined as follows (see \cite[Definition 2]{DN}):
 
 \begin{definition}\label{def:K-smgrp} By a 
 \textbf{$\kappa $-semigroup} we shall mean a semigroup $T$
 with $0$ together with a map $\kappa \times T \to T$ such that $\af(\beta x) = (\af \beta )x,$ $\af (xy) = (\af x ) y = x (\af y),$ $1_{\kappa } x =x$ for any $\af , \beta \in \kappa, x,y \in T$ and the zero of $\kappa$ multiplied by any element of $T$ is $0 \in T.$ A \textbf{$\kappa $-cancellative semigroup} is a $\kappa $-semigroup $T$ such that for any $\alpha, \beta \in K$ and $0\neq x\in T$ one has that $\alpha x = \beta x \; \Longrightarrow \alpha = \beta.$ 
 \end{definition} Evidently, any unital (associative) $\kappa$-algebra is a $\kappa$-cancellative (multiplicative) monoid. We shall need some notions and facts from \cite{DN} adapted to the case of algebras.
 
 \medbreak
 
 The following is a slight modification of \cite[Definition 1]{DN} (see also \cite[Definition 2]{DN}):

 \begin{definition}\label{def:ProjMonoidRepr}
 A \textbf{projective representation} of a monoid $S$ in a unital $\kappa $-algebra $R$ is a map $\Gamma: S\to R$ for which there is a function $\rho:S\times S\to \kappa $
 such that for every $x,y\in S:$ 
 \begin{equation}\label{eq:ProjMonoid1}
 \rho (x,y)=0 \Leftrightarrow \Gamma(xy) = 0,
 \end{equation}
 and
 \begin{equation}\label{eq:ProjMonoid2}
  \quad\Gamma(x)\Gamma(y) = \Gamma(xy)\rho(x,y).
 \end{equation}
 The map $\rho$ is called the \textbf{factor set of} $\Gamma .$  
 \end{definition}
 
 If we replace in Definition~\ref{def:ProjMonoidRepr} the algebra $R$ by a $\kappa $-cancellative monoid $M,$ then we obtain a definition equivalent to \cite[Definition 1]{DN}. We say that $\G$ is {\it unital} is $\G (1_G)=1_R.$
 
 Applying
 (\ref{eq:ProjMonoid2}) to the equality $\Gamma(x)[\Gamma(y)\Gamma(z)]=
 [\Gamma(x)\Gamma(y)]\Gamma(z),$ we obtain:
 \begin{equation}\label{eq:ProjMonoid3}
 \rho(x,y)\rho(xy,z)=\rho(x,yz)\rho(y,z) \text{ for all }x,y,z \in S.
 \end{equation}

 Given a monoid $S,$ denote by ${\rm RCFMat _{S\times S}}(\kappa )$ the algebra of all row and column finite matrices over $\kappa $ whose columns and rows are indexed by the elements of $S.$ By a projective matrix representation of a monoid $S$ over $\kappa $ we shall mean a projective representation of the form 
 $S\to {\rm Mat}_n(\kappa),$ $n>0,$ or $S\to {\rm RCFMat _{S\times S}}(\kappa ).$ We need the next fact from \cite{DN}.
 
 \begin{theorem}\label{teo:MonoidFactorSet}(\cite[Theorem 1]{DN})
 Let $S$ be a monoid. A map $\rho:S\times S\to \kappa $ is a factor set
 for some projective matrix representation of $S$ over $\kappa $ if and
 only if $\rho$ satisfies \eqref{eq:ProjMonoid3} and, moreover,
 \begin{equation*}
 \rho(x,y)=0 \Longleftrightarrow \rho(1,xy)=0
 \end{equation*}
 for all $x,y\in S$.
 \end{theorem}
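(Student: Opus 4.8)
The plan is to prove the two implications separately. For necessity, suppose $\rho$ is the factor set of a projective matrix representation $\Gamma$ of $S$. Relation \eqref{eq:ProjMonoid3} was already derived from associativity for arbitrary projective representations, so it holds automatically. For the second condition I would apply \eqref{eq:ProjMonoid1} twice: it gives $\rho(x,y)=0\iff\Gamma(xy)=0$, and, applied to the pair $(1,xy)$ together with $1\cdot(xy)=xy$, also $\rho(1,xy)=0\iff\Gamma(xy)=0$. Chaining the two equivalences yields $\rho(x,y)=0\iff\rho(1,xy)=0$. This direction is essentially immediate from the definitions.

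For sufficiency, assume $\rho$ satisfies \eqref{eq:ProjMonoid3} and $\rho(x,y)=0\iff\rho(1,xy)=0$. I would exhibit an explicit candidate: the matrix $\Gamma(x)$ indexed by $S\times S$ with $\Gamma(x)_{s,t}=\rho(x,t)$ when $s=xt$ and $\Gamma(x)_{s,t}=0$ otherwise. This is nothing but left multiplication by $u_x$ in the twisted monoid algebra with basis $\{u_s\}_{s\in S}$ and product $u_xu_y=\rho(x,y)u_{xy}$, whose associativity is precisely \eqref{eq:ProjMonoid3}. To verify \eqref{eq:ProjMonoid2}, I would compute the $(s,t)$ entry of $\Gamma(x)\Gamma(y)$: the sum over the intermediate index collapses to the single term with index $yt$, producing $\rho(x,yt)\rho(y,t)$ supported on $s=xyt$, and \eqref{eq:ProjMonoid3} applied to $(x,y,t)$ rewrites this as $\rho(x,y)\rho(xy,t)$, which is exactly the $(s,t)$ entry of $\rho(x,y)\Gamma(xy)$.

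It then remains to check \eqref{eq:ProjMonoid1}. Writing $z=xy$, the matrix $\Gamma(z)$ is zero iff $\rho(z,t)=0$ for every $t$. Taking $t=1$ gives $\rho(z,1)=0$, whence $\rho(1,z)=0$ by the zero-condition; so $\Gamma(z)=0$ forces $\rho(1,z)=0$. Conversely, if $\rho(1,z)=0$, then \eqref{eq:ProjMonoid3} applied to $(1,z,t)$ gives $\rho(z,t)\bigl[\rho(1,z)-\rho(1,zt)\bigr]=0$, so $\rho(z,t)\rho(1,zt)=0$; since the zero-condition makes the two factors vanish simultaneously, $\rho(1,zt)=0$, hence $\rho(z,t)=0$, for all $t$, i.e.\ $\Gamma(z)=0$. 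Thus $\Gamma(z)=0\iff\rho(1,z)=0$, and because $\rho(1,xy)=0\iff\rho(x,y)=0$ this is exactly \eqref{eq:ProjMonoid1}.

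The step I expect to be the genuine obstacle is showing that the representation can be taken with values in $\mathrm{RCFMat}_{S\times S}(\kappa)$. By construction every column of $\Gamma(x)$ has a single nonzero entry, so column-finiteness is automatic; but the $s$-th row is supported on the fibre $\{t:xt=s\}$ of left multiplication, which may be infinite --- already for the plain monoid algebra ($\rho\equiv1$) of a monoid with an absorbing element --- so the naive left regular representation need not be row-finite. Hence the substantive point is to replace it by a genuinely row-and-column-finite model realizing the same $\rho$: for finite $S$ one simply lands in $\mathrm{Mat}_n(\kappa)$, while in general one must tame these fibres, for instance by letting $S$ act through a suitable quotient of the index set on which left multiplication has finite fibres, without disturbing \eqref{eq:ProjMonoid2} or the exact zero-pattern forced by the zero-condition. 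Since only the existence of such a representation (not its faithfulness) is needed, this is plausible, but controlling finiteness and the zero-pattern simultaneously is where the argument requires the most care.
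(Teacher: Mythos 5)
First, a structural point: the paper never proves this statement --- it is imported verbatim from \cite[Theorem 1]{DN} --- so your attempt can only be judged against what the theorem claims, not against an internal argument. Judged that way, your necessity direction is correct and complete, and your verification that the twisted left regular representation $\Gamma(x)_{s,t}=\rho(x,t)\delta_{s,xt}$ satisfies \eqref{eq:ProjMonoid2} and \eqref{eq:ProjMonoid1} (via the cocycle identity and the zero-condition) is also correct. But what this construction yields is a projective representation into the algebra of \emph{column}-finite matrices, i.e.\ into $\operatorname{End}_\kappa$ of a free module --- equivalently, into a unital $\kappa$-algebra. The theorem claims strictly more: a representation into ${\rm Mat}_n(\kappa)$ or ${\rm RCFMat}_{S\times S}(\kappa)$, and, as you yourself observe (your absorbing-element example is correct), your matrices need not be row-finite. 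This missing step is not a finiteness technicality that may be left as ``plausible''; it is the actual content of \cite[Theorem 1]{DN}, since landing in the row-and-column-finite matrices is exactly what makes the class of matrix representations the normalizing one in Remark~\ref{rem:MonoidFactorSets} and Remark~\ref{rem:ParFactorSets}. Note also that you cannot close the gap by citing Remark~\ref{rem:MonoidFactorSets} to pass from your algebra-valued representation back to a matrix one: that remark is itself quoted from \cite[Corollary 1]{DN}, which in \cite{DN} is derived from the very theorem being proved, so the appeal would be circular.

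Moreover, the repair you sketch (make left multiplication have finite fibres by passing to a suitable quotient or subset of the index set) does not go through as stated. A compression of the regular representation to a set $D$ of indices satisfies \eqref{eq:ProjMonoid2} only under a convexity constraint: whenever $t\in D$, $xyt\in D$ and the relevant values of $\rho$ are nonzero, one needs $yt\in D$. This constraint can force $D$ to be as bad as $S$ itself: for the monoid consisting of an identity and infinitely many constant maps (for which $\rho\equiv 1$ is an admissible factor set), any convex $D$ containing one constant contains all of them, and the rows are again infinite; there the factor set $\rho\equiv 1$ is realized only by representations of a different shape (e.g.\ the trivial one), not by any compressed monomial model. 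So in general one must combine monomial pieces with genuinely non-monomial ones, and organizing this while preserving both \eqref{eq:ProjMonoid2} and the exact zero-pattern required by \eqref{eq:ProjMonoid1} is where the real proof lives. Your proposal proves the ``algebra-valued'' version of sufficiency but leaves the matrix-valued statement --- the statement actually made --- open.
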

 
 \textbf{In all what follows} we assume that the projective representations of $S$ are unital.\medbreak
 
 Notice the following:
 
 \begin{remark}\label{rem:MonoidFactorSets} It follows from \cite[Corollary 1]{DN} that the set $m(S)$ of all factor sets of projective matrix representations of a monoid $S$ over $\kappa $ coincides with the set of all factor sets of projective representations of $S$ into $\kappa $-cancellative monoids, and consequently, $m(S)$ is the same as the set of all factor sets of projective representations of $S$ into $\kappa$-algebras.
 \end{remark}

 Define a product of factor sets $\rho$ and $\sigma$ of projective matrix representations of $S$ by the pointwise multiplication:
 $\rho\sigma(x,y)=\rho(x,y)\sigma(x,y)$. It is an immediate consequence of
 Theorem~\ref{teo:MonoidFactorSet} that $\rho\sigma$ is also a factor set, and hence, $m(S)$ is a commutative semigroup.\medbreak

 The next concept is based on \cite[Definition 6, Theorem 3]{DN}.\medbreak

\begin{definition}\label{def:parproj} A \textbf{partial projective 
 representation} of a group $G$ in a unital $\kappa $-algebra $R$ is a map $\Gamma : G \to R,$ for which there is a function 
 $\s : G\times G \to \kappa,$ such that the following postulates are satisfied for all $g,h \in G:$
 \begin{equation}\label{parproj1}
 \s (g,h) = 0 \Rightarrow \G (g\m) \G (g h)=0 \;\;\; \&\;\;\; 
 \G(g h) \G(h\m) = 0, 
 \end{equation} 
 \begin{equation}\label{parproj2}
 \G (g\m) \G (g) \G (h) = \s (g,h) \G (g\m) \G (g h),
 \end{equation}
 \begin{equation}\label{parproj3}
 \G (g) \G (h) \G (h\m) = \s (g,h) \G (gh) \G (h\m),
 \end{equation} 
 \begin{equation}\label{parproj4}
 \G (1_G) = 1_R.
 \end{equation}
\end{definition} 
 
 Similarly, as in the case of projective representations of monoids, if we replace in Definition~\ref{def:parproj} the algebra $R$ by a $\kappa $-cancellative monoid $M,$ then we obtain a definition equivalent to \cite[Definition 6]{DN} (see also \cite[Theorem 3]{DN}).\medbreak
 
 Notice that taking $h=g\m$ in \eqref{parproj1} and using \eqref{parproj4} we obtain
 \begin{equation}\label{parproj5}
 \s (g,g\m) = 0 \Rightarrow \G (g)=0 \;\;\; \&\;\;\; \G (g\m)=0. 
 \end{equation} Furthermore, for any $g,h\in G:$
 \begin{equation}\label{parproj6}
 \s (g,h) = 0 \Rightarrow \G (g) \G (h)=0. 
 \end{equation}
 Indeed, \eqref{parproj2} gives $\G (g\m) \G (g) \G (h) = 0$ which implies 
 \[
   0= \G(g) \G (g\m) \G (g) \G (h) = \s (g,g\m)  \G (g) \G (h).
 \]
 This yields $\G (g) \G (h)=0$ keeping in mind \eqref{parproj5}.\medbreak
 
 It is also easily seen that 
 \begin{equation}\label{parproj7}
 \G (g\m) \G (g h) = 0 \Leftrightarrow \G (g) \G (h)=0
 \Leftrightarrow \G (gh) \G (h\m)=0, 
 \end{equation} for all $g,h\in G.$\medbreak

 Note that Definition~\ref{def:parproj} does not imply any restriction on the values of the function  $\s: G \times G \to \kappa$ on the set 
  $${\rm Null}(\G ) = \{(g,h)\in G\times G \; \mid \; \G(g) \G(h)=0\},$$
   and therefore, in general,  $\s $ is not determined uniquely by $\G.$ Moreover, we may modify the values of $\s $ on 
   ${\rm Null}(\G )$ arbitrarily, and the properties 
   \eqref{parproj1}-\eqref{parproj4} will still be satisfied. In particular, we may require   
 $$  \G (g) \G (h)=0  \Rightarrow \s (g,h) = 0, \;\;\;\;\; \forall g,h \in G,  $$ and if this also holds, then, according to \cite[p. 258]{DN}, we shall say that $\s $ is a {\it factor set} of $\G .$ Clearly, if $\s $ is a factor set,  then 
 \begin{equation}\label{parproj8}
 \s (g,h) = 0 \Leftrightarrow  \G(g)\G(h) = 0, \;\; \forall g,h\in G.
 \end{equation}   Moreover, a factor set is uniquely determined by 
 $\G.$ Furthermore, if $\s$ is a factor set for $\G ,$ then using 
 \eqref{parproj4}, \eqref{parproj7} and \eqref{parproj8} we obtain that
 \begin{equation}\label{parproj9}
  \s(g,g\m )=0 \Leftrightarrow \G (g) \G (g\m) =0 
 \Leftrightarrow \G (g) =0 \Leftrightarrow \G (g\m)  =0
 \Leftrightarrow 
  \G (g) \G (g\m) =0,  
 \end{equation} for any $g\in G.$ In addition, in view of 
 \eqref{parproj2}-\eqref{parproj4} and \eqref{parproj9} we have that
 \begin{equation}\label{parproj10}
 \s(g,g\m )=\s(g\m ,g ),\;\;\;\;\; \forall g\in G.
 \end{equation}

 As in the case of monoids, by  a {\it partial projective matrix representation} of a group $G$ over $\kappa $  we mean a partial projective representation of the form $G\to {\rm Mat}_n(\kappa),$ $n>0,$ or $G\to {\rm RCFMat _{S\times S}}(\kappa ).$\medbreak
 
  \begin{definition}\label{def:ParRepr} By a 
  \textbf{partial representation} of $G$ in a unital $\kappa $-algebra $R$ we mean a partial projective representation with factor set $\s $ such that $\s (g,h) \in \{0,1 \}$ for all $g,h \in G.$
  \end{definition} 
  It is easy to see that Definition~\ref{def:ParRepr}
  is equivalent to the definition given in \cite[pp. 508-509]{DN}.
  Obviously, the definition given in \cite{DEP} can be formulated with values in an algebra over an arbitrary $K.$\medbreak 
 
 Partial projective representations of a group $G$ are related to projective representations of Exel's monoid $S(G),$ which was defined in \cite{exe} as the semigroup  generated by the symbols $[g]$ ($g\in G$) with the defining
 relations
 \begin{eqnarray}
 [g^{-1}][g][h]&=&[g^{-1}][gh],\nonumber\\
 {}[g][h][h^{-1}]&=&[gh][h^{-1}],\nonumber\\
 {}[g][1_G]&=&[g],\nonumber
 \end{eqnarray}
 $g, h \in G.$ It follows  that $[1_G][g]=[g],$ so that $S(G)$ is a monoid. We know that $S(G)$ is an inverse monoid \cite{exe} and by \cite{KL} and \cite{Sze} it is isomorphic to the Birget-Rhodes expansion of $G.$ In addition, the algebra which controls the partial representations of $G$ is the partial group algebra $\Kpar G$ which is the semigroup algebra $KS(G)$ (see \cite[Definition 2.4]{DEP}).\medbreak
 
 Proposition 3 from \cite{DN} can be stated as follows:
 
  \begin{proposition}\label{prop:ParFactorSets}
 A  map $\sigma:G\times G\to \kappa$ is a factor set for some
 partial projective representation  $\Gamma$ of $G$ into a $\kappa $-cancellative monoid $M$ if and only if
 there is a factor set $\rho$ of  some projective representation of  $S(G)$ in $M$ such
 that:
 \begin{eqnarray}
 &1)&\  \s(g,h)=0
 \Longleftrightarrow \rho([g],[h])=0, \;\;\; \;\; \forall\, g,h\in G,  \nonumber\\
 &2)&\  \sigma(g,h)=
 \frac{\rho([g],[h])\,\rho([g^{-1}],[g][h])}{\rho([g^{-1}],[gh])},
 \;\;\; \;\; \forall\, g,h \in G \;\;\; \text{such that}\;\;\;\s (g,h)\neq 0. \nonumber\\
 \end{eqnarray}
  \end{proposition}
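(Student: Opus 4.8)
The plan is to use the evaluation $\G(g)=\Lambda([g])$ as the bridge between the two notions, where $\Lambda\colon S(G)\to M$ ranges over the (unital) projective representations of Exel's monoid with factor set $\rho$. Both directions rest on evaluating the projective-representation identity $\Lambda(s)\Lambda(t)=\Lambda(st)\rho(s,t)$ along the two defining relations of $S(G)$, together with the $\kappa$-cancellativity of $M$ (which guarantees that $\alpha x=0$ with $x\neq 0$ forces $\alpha=0$). The single computation that produces formula~2) comes from the relation $[g\m][g][h]=[g\m][gh]$: expanding $\Lambda([g\m])\Lambda([g])\Lambda([h])$ by associativity gives
\[
\Lambda([g\m])\Lambda([g])\Lambda([h]) = \Lambda([g\m][gh])\,\rho([g],[h])\,\rho([g\m],[g][h]),
\]
whereas directly $\Lambda([g\m])\Lambda([gh]) = \Lambda([g\m][gh])\,\rho([g\m],[gh])$.

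For the implication ($\Leftarrow$) I would start from $\Lambda$ with factor set $\rho$, put $\G=\Lambda\circ[-]$, and \emph{define} $\s$ by 1) and 2). The desired identity \eqref{parproj2}, namely $\G(g\m)\G(g)\G(h)=\s(g,h)\G(g\m)\G(gh)$, is an equality between two expressions, each of which is a scalar times the common element $\Lambda([g\m][gh])$; when that element is nonzero, $\kappa$-cancellativity reduces the identity to $\rho([g],[h])\rho([g\m],[g][h])=\s(g,h)\rho([g\m],[gh])$, which is exactly 2), while if it vanishes both sides are zero. The mirror relation $[g][h][h\m]=[gh][h\m]$ yields \eqref{parproj3} (its consistency with 2) being a consequence of the cocycle identity \eqref{eq:ProjMonoid3} for $\rho$), unitality of $\Lambda$ gives \eqref{parproj4}, and 1) gives \eqref{parproj8}. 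For \eqref{parproj1} I would combine the factor-set property $\rho(x,y)=0\Leftrightarrow\Lambda(xy)=0$ with the $S(G)$-identity $[g][g\m][gh]=[g][h]$: when $\rho([g],[h])=0$ one has $\Lambda([g\m])\Lambda([g][h])=\Lambda([g\m][gh])\,\rho([g\m],[g][h])=0$, and since a nonzero scalar cannot kill a nonzero element, this forces $\Lambda([g\m][gh])=0$, i.e. $\G(g\m)\G(gh)=0$; the second half is symmetric.

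The implication ($\Rightarrow$) is where I expect the real difficulty, since from $\G$ one must manufacture a genuine projective representation of the \emph{whole} monoid $S(G)$, not merely prescribe values on generators. I would fix a linear order on $G$ and use the normal form in which each element of $S(G)$ is written uniquely as $e_{h_1}\cdots e_{h_n}[g]$, with the $e_{h_i}=[h_i][h_i\m]$ the commuting idempotents and $g$ the group part, and set
\[
\Lambda\big(e_{h_1}\cdots e_{h_n}[g]\big)=\G(h_1)\G(h_1\m)\cdots\G(h_n)\G(h_n\m)\,\G(g),
\]
so that $\Lambda([g])=\G(g)$. The crux is to prove that $\Lambda(s)\Lambda(t)$ is always a scalar multiple of $\Lambda(st)$: rewriting the word $\Lambda(s)\Lambda(t)$ into normal form by repeated application of \eqref{parproj1}–\eqref{parproj3} collects a scalar built from values of $\s$, which I would take as $\rho(s,t)$. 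Showing that this $\rho$ is well defined (independent of the reordering) and is a bona fide factor set — that is, $\rho(s,t)=0\Leftrightarrow\Lambda(st)=0$ and the cocycle identity \eqref{eq:ProjMonoid3} — is the technical heart, and here Theorem~\ref{teo:MonoidFactorSet} is the natural certificate, as it reduces being a factor set of $S(G)$ to the cocycle identity together with $\rho(x,y)=0\Leftrightarrow\rho(1,xy)=0$. Once $\Lambda$ and $\rho$ are in place, restricting to the pairs $([g],[h])$ recovers 1) and 2) by the very computation displayed above, completing the equivalence.
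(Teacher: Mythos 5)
First, a point of reference: the paper itself contains no proof of this statement --- it is quoted as Proposition 3 of \cite{DN} --- so the only argument to compare yours against is the one in that reference, which your plan essentially reconstructs.

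Your ($\Leftarrow$) direction is correct and essentially complete. Setting $\G=\Lambda\circ[\cdot]$, the two defining relations of $S(G)$, $\kappa$-cancellativity, and the monoid factor-set property $\rho(x,y)=0\Leftrightarrow\Lambda(xy)=0$ do yield \eqref{parproj1}--\eqref{parproj4} and \eqref{parproj8}, and the compatibility of \eqref{parproj3} with formula 2) is indeed an instance of the cocycle identity \eqref{eq:ProjMonoid3}, applied at the triples $([g\m],[g][h],[h\m])$ and $([g\m],[gh],[h\m])$. Two details need patching: in your argument for \eqref{parproj1} you must also handle the case $\rho([g\m],[g][h])=0$ (your ``nonzero scalar cannot kill a nonzero element'' step silently assumes this scalar is nonzero; in the vanishing case the factor-set property itself gives $\Lambda([g\m][g][h])=\Lambda([g\m][gh])=0$, so the conclusion survives), and you should check that the denominator $\rho([g\m],[gh])$ in 2) is nonzero whenever $\s(g,h)\neq 0$, which follows by a similar cancellation argument from the identity $[g][g\m][gh]=[g][h]$.

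The genuine gap is the ($\Rightarrow$) direction: what you give there is a plan, not a proof. The entire content of that implication is exactly the verification you defer, namely that the normal-form prescription $\Lambda(e_{h_1}\cdots e_{h_n}[g])=\G(h_1)\G(h_1\m)\cdots\G(h_n)\G(h_n\m)\G(g)$ satisfies $\Lambda(s)\Lambda(t)=\Lambda(st)\rho(s,t)$ for a well-defined scalar $\rho(s,t)$ vanishing exactly when $\Lambda(st)$ does; this rests on the commutation rules \eqref{Commuting e} and a careful case analysis of vanishing, and it is precisely the heart of the proof in \cite{DN}. Moreover, your proposed certificate is misaligned: Theorem~\ref{teo:MonoidFactorSet} characterizes factor sets of \emph{matrix} representations over $\kappa$, whereas the statement demands a factor set of a projective representation of $S(G)$ \emph{in the same monoid $M$}; certifying abstractly that $\rho\in m(S(G))$ would not produce the required representation in $M$, while once you have verified the two displayed properties of $\Lambda$ and $\rho$ directly, no appeal to that theorem is needed. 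So the strategy is the right one, but the proof is incomplete exactly where the real work lies.
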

  
  We want to be sure that we do not lose factor sets if we restrict ourselves to partial projective representations with values in $\kappa $-algebras. This is guaranteed by the next:
 
 \begin{remark}\label{rem:ParFactorSets}
 Any factor set of a partial projective representation of the group $G$ into some $\kappa $-cancellative monoid is also a factor set of some partial projective matrix representation of $G$ over $\kappa,$ and hence into a $\kappa $-algebra. 
 \end{remark}
 
 \begin{proof}
     Indeed, let  $\s$ be a factor set of a partial projective representation of  $G$ into a $\kappa $-cancellative monoid $M.$ Then by Proposition~\ref{prop:ParFactorSets} there is a projective representation $S(G)\to M$ with factor set $\rho$ such that 1) and 2) of Proposition~\ref{prop:ParFactorSets} hold. Then by Remark~\ref{rem:MonoidFactorSets} $\rho$ is also a factor set of some projective matrix representation of $S(G).$ Using again Proposition~\ref{prop:ParFactorSets} we conclude that $\s $ is a factor set of some partial projective matrix representation of $G$  over $\kappa$.
 \end{proof}

 It is a direct consequence of Proposition~\ref{prop:ParFactorSets} that the factor sets of partial  projective representations of  a group $G$ into $\kappa $-cancellative monoids form a commutative semigroup $pm(G)$ with respect to the pointwise multiplication
 $(\sigma \tau)(g,h)=\sigma (g,h) \tau (g,h)$. Thanks to  Remark~\ref{rem:ParFactorSets} $pm(G)$ coincides with the set of all factor sets of partial  projective representations of  $G$ into
 $\kappa $-algebras and also with the set all factor sets of partial  projective matrix representations of  $G$ over
 $\kappa .$  Observe that $pm(G)$  is 
  a commutative inverse monoid \cite[p. 259]{DN}.\medbreak
  
  We proceed with the following:
  
  \begin{definition}\label{def:sigma-repr} Let $\s \in pm(G).$ By a \textbf{partial projective $\s $-representation} (or shortly a partial 
  $\s $-representation) of  $G$ we shall mean a partial projective representation $\G : G \to R $ of  $G$ in a unital  $\kappa  $-algebra $R$ such that $\G$ and $\s $ satisfy 
  \eqref{parproj1}-\eqref{parproj4}.  
  \end{definition}

  The partial $K$-representations of a group $G$ are governed by the partial group algebra $K_{par}G$ (see \cite{DEP}), we now define a $\kappa$-algebra which will control the partial projective $\sigma$-representations.

  \begin{definition}\label{def:ParTwiGrAlg}  Let $\s \in pm(G).$  The \textbf{twisted partial group algebra} of $G$  is the  (universal)  $\kappa $-algebra $\kappa_{par}^{\sigma}G$ generated by the  symbols  $[g]^\sigma, \, (g\in G),$ subject to the relations
\begin{align}
    \s(g,h)=0 & \Rightarrow [g^{-1}]^\sigma [gh]^\sigma=[gh]^\sigma  [h ^{-1}]^\sigma =0, \\
    [g\m]^\sigma [g]^\sigma [h]^\sigma &= \s (g,h) [g\m]^\sigma [gh]^\sigma, \\ 
    [g]^\sigma [h]^\sigma [h\m ]^\sigma &= \s (g,h) [gh]^\sigma  [h\m]^\sigma, \\
    [g]^\sigma [1_G]^\sigma &= [1_G]^\sigma [g]^\sigma=[g]^\sigma, 
\end{align}
 for all $g,h \in G.$
  \end{definition} 
  Obviously, $[1_G]^\sigma= 1_{\kappa_{par}^{\sigma}G}.$ Evidently, the map $g \mapsto [g]^\sigma$ is a partial $\sigma$-representation of $G$  in  $\kappa_{par}^{\sigma}G$, which will be called the {\it canonical partial $\sigma$-representation} of $G$ and denoted by $[\cdot ]^\sigma.$ In particular,
\begin{equation}\label{parTwiA}
    \s (g,g\m) = 0 \Rightarrow [g]^\sigma=0 \;\;\; \& \;\;\;  [g\m]^\sigma=0,
\end{equation}
\begin{equation}\label{parTwiB}
    \sigma(g,h) = 0 \Rightarrow [g]^\sigma [h]^\sigma=0,
\end{equation} 
and
\begin{equation}\label{parTwiC}
    [g\m]^\sigma [g h]^\sigma = 0 \Leftrightarrow [g]^\sigma [h]^\sigma=0 \Leftrightarrow [gh]^\sigma [h\m]^\sigma=0, 
\end{equation} 
for all $g,h\in G.$ \medbreak

  
  If  $\G :G \to R$ is a partial projective $\s$-representation, 
  then it is an immediate consequence of Definitions~\ref{def:sigma-repr} and~\ref{def:ParTwiGrAlg} that the map $\tilde{\G} : \kpar ^\sigma G \to R,$ determined by $[g]^\sigma \mapsto \G (g),\, g\in G,$ is a well-defined $\kappa$-algebra homomorphism such that  $\tilde{\G} \circ  [\cdot ]^\sigma = \G.$  Conversely, it is obvious that since $[\cdot ]^\sigma$ is a partial $\s$-representation, then for any $\kappa$-algebra homomorphism $\tilde{\Gamma} : \kpar ^\sigma G \to R$ the map $\tilde{\Gamma}\circ [\cdot]^\sigma : G \to R$ is a partial  $\s$-representation. Thus, we have the next fact.

 \begin{proposition}\label{prop:1to1corresp} 
   Let $\s \in pm(G).$ For any partial $\s$-representation $\G $ of $G$ into a unital $\kappa $-algebra $R$ there exists a unique $\kappa $-algebra homomorphism $\tilde{\Gamma} : \kpar ^\sigma G \to R$ such that 
 \begin{equation}\label{univProperty} 
  \tilde{\Gamma} \circ  [\cdot ]^\sigma = \Gamma.
 \end{equation}
    Conversely, for any $\kappa$-algebra homomorphism $\tilde{\Gamma} : \kpar ^\sigma G \to R$ gives rise to a partial $\s$-representation $\G :G \to R$ such that \eqref{univProperty} holds.
 \end{proposition}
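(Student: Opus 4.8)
The plan is to treat this purely as a verification of the universal property built into Definition~\ref{def:ParTwiGrAlg}, reducing everything to the observation that the four defining relations of $\kpar^\sigma G$ are literally the four axioms \eqref{parproj1}--\eqref{parproj4} of a partial projective $\s$-representation, transcribed with $[g]^\sigma$ in place of $\G(g)$. Since $\kpar^\sigma G$ is the free $\kappa$-algebra on the symbols $[g]^\sigma$ ($g \in G$) modulo the two-sided ideal generated by those relations, specifying a $\kappa$-algebra homomorphism out of it amounts to assigning to each generator an element of the target that respects all the relations.

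For existence, given a partial $\s$-representation $\G : G \to R$, I would assign $[g]^\sigma \mapsto \G(g)$ and check that these images satisfy the defining relations of $\kpar^\sigma G$. This is immediate: the first defining relation is exactly \eqref{parproj1}, the second is \eqref{parproj2}, and the third is \eqref{parproj3}; for the fourth one uses $\G(1_G) = 1_R$ from \eqref{parproj4}, so that $\G(g)\G(1_G) = \G(1_G)\G(g) = \G(g)$ holds in the unital algebra $R$. By the universal property of the quotient algebra, the assignment extends to a $\kappa$-algebra homomorphism $\tilde{\G} : \kpar^\sigma G \to R$, and by construction $\tilde{\G}([g]^\sigma) = \G(g)$, that is, $\tilde{\G} \circ [\cdot]^\sigma = \G$. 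Uniqueness is then immediate from the fact that the $[g]^\sigma$ generate $\kpar^\sigma G$ as a $\kappa$-algebra: any homomorphism satisfying \eqref{univProperty} must send $[g]^\sigma$ to $\G(g)$, and a $\kappa$-algebra homomorphism is determined by its values on a generating set.

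For the converse, given a $\kappa$-algebra homomorphism $\tilde{\G} : \kpar^\sigma G \to R$, I would set $\G := \tilde{\G} \circ [\cdot]^\sigma$ and verify that $\G$ is a partial $\s$-representation. Here the argument runs in the opposite direction: since $[\cdot]^\sigma$ satisfies the defining relations in $\kpar^\sigma G$ (as noted right after Definition~\ref{def:ParTwiGrAlg}), applying the multiplicative, $\kappa$-linear, unital map $\tilde{\G}$ to each relation yields the corresponding axiom among \eqref{parproj1}--\eqref{parproj4} for $\G$; in particular $\G(1_G) = \tilde{\G}([1_G]^\sigma) = \tilde{\G}(1_{\kpar^\sigma G}) = 1_R$, using that $[1_G]^\sigma = 1_{\kpar^\sigma G}$. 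The identity $\tilde{\G} \circ [\cdot]^\sigma = \G$ then holds by definition.

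There is essentially no hard step here, as the statement is engineered to be precisely the universal property of $\kpar^\sigma G$; the only point demanding a little care is the bookkeeping around the unit, namely matching the fourth defining relation together with the observation $[1_G]^\sigma = 1_{\kpar^\sigma G}$ against the normalization axiom $\G(1_G) = 1_R$, so that both $\tilde{\G}$ and $\G$ come out genuinely unital.
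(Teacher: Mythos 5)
Your proof is correct and follows essentially the same route as the paper, which likewise treats the proposition as an immediate consequence of matching the defining relations of $\kpar^\sigma G$ (Definition~\ref{def:ParTwiGrAlg}) against the axioms \eqref{parproj1}--\eqref{parproj4}, extending $[g]^\sigma \mapsto \G(g)$ by the universal property of the quotient, and obtaining the converse by composing $\tilde{\Gamma}$ with the canonical partial $\s$-representation $[\cdot]^\sigma$. Your extra care with the unit (matching the fourth relation and $[1_G]^\sigma = 1_{\kpar^\sigma G}$ with $\G(1_G)=1_R$) and the explicit uniqueness-from-generators argument are just expansions of details the paper leaves implicit.
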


It is well-known that there exist an anti-isomorphism of algebras $K_{par}G \to K_{par}G$ such that $[g] \mapsto [g^{-1}]$, consequently the algebras $K_{par}G$ and $(K_{par}G)^{op}$ are isomorphic via this anti-isomorphism. We want to understand the relation between the twisted group algebra and its opposite algebra. 

\begin{definition}
    Let $\rho \in pm(G)$. We define the involution $\rho^*$ of $\rho$ such that $\rho^*(g,h)=\rho(h^{-1},g^{-1})$.
\end{definition}

\begin{lemma} \label{l_factorinverse}
    If $\rho \in pm(G)$, then $\rho^* \in pm(G)$ and there exist a surjective morphism of algebras $\kappa_{par}^{\rho^*}G \to (\kappa_{par}^{\rho}G)^{op}$ such that $[g]^{\rho^*} \mapsto [g^{-1}]^\rho \in (\kappa_{par}^{\rho}G)^{op}$. 
\end{lemma}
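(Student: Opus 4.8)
The plan is to prove both assertions at once by exhibiting a single ``mirror'' construction and then feeding it into the universal property of Proposition~\ref{prop:1to1corresp}. By Remark~\ref{rem:ParFactorSets} I may assume that $\rho$ is the factor set of a partial projective representation $\G\colon G\to R$ with $R$ a unital $\kappa$-algebra. I would then define $\G^*\colon G\to R^{op}$ by $\G^*(g)=\G(g\m)$ and claim that $\G^*$ is a partial projective representation of $G$ in $R^{op}$ having $\rho^*$ as a factor set; granting this, $\rho^*\in pm(G)$ is immediate, since $pm(G)$ is by definition the set of all such factor sets.

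The heart of the argument is the verification of \eqref{parproj1}--\eqref{parproj4} for the pair $(\G^*,\rho^*)$, and the mechanism that makes it work is the substitution $(g,h)\mapsto(h\m,g\m)$ combined with the reversal of products in $R^{op}$. Concretely, writing $a\cdot b=ba$ for the product of $R^{op}$, the expression $\G^*(g\m)\cdot\G^*(g)\cdot\G^*(h)$ unwinds in $R$ to $\G(h\m)\G(g\m)\G(g)$, while $\rho^*(g,h)\,\G^*(g\m)\cdot\G^*(gh)$ unwinds to $\rho(h\m,g\m)\,\G(h\m g\m)\G(g)$; hence axiom \eqref{parproj2} for $(\G^*,\rho^*)$ is exactly axiom \eqref{parproj3} for $\G$ read at $(h\m,g\m)$. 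Symmetrically \eqref{parproj3} for $(\G^*,\rho^*)$ becomes \eqref{parproj2} for $\G$ at $(h\m,g\m)$, and the two vanishing conditions in \eqref{parproj1} simply get interchanged under the same substitution. Axiom \eqref{parproj4} is trivial, as $\G^*(1_G)=\G(1_G)=1_R=1_{R^{op}}$. Finally, to see that $\rho^*$ is genuinely a \emph{factor set} of $\G^*$ and not merely a compatible function, I would invoke \eqref{parproj8} for $\G$: since $\G^*(g)\cdot\G^*(h)=\G(h\m)\G(g\m)$ in $R$, the equivalence $\rho^*(g,h)=0\Leftrightarrow\G^*(g)\cdot\G^*(h)=0$ follows from $\rho(h\m,g\m)=0\Leftrightarrow\G(h\m)\G(g\m)=0$.

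For the morphism I would apply this mirror construction to the canonical partial $\rho$-representation $[\cdot]^\rho\colon G\to\kappa_{par}^{\rho}G$. By the previous paragraph the map $\Lambda\colon G\to(\kappa_{par}^{\rho}G)^{op}$, $g\mapsto[g\m]^\rho$, is a partial projective $\rho^*$-representation in the sense of Definition~\ref{def:sigma-repr}, so Proposition~\ref{prop:1to1corresp} furnishes a unique $\kappa$-algebra homomorphism $\tilde\Lambda\colon\kappa_{par}^{\rho^*}G\to(\kappa_{par}^{\rho}G)^{op}$ with $\tilde\Lambda([g]^{\rho^*})=[g\m]^\rho$. Surjectivity is then clear: as $g$ ranges over $G$ so does $g\m$, hence the image of $\tilde\Lambda$ contains every generator $[h]^\rho$, and these generate $(\kappa_{par}^{\rho}G)^{op}$ as a $\kappa$-algebra.

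The only real obstacle is the bookkeeping in the second paragraph: one must keep straight that passing to $R^{op}$ reverses products and that the correct index substitution is $(g,h)\mapsto(h\m,g\m)$ rather than $(g\m,h\m)$, since it is precisely this choice that swaps the roles of \eqref{parproj2} and \eqref{parproj3} and matches the defining formula $\rho^*(g,h)=\rho(h\m,g\m)$. Once the substitution is fixed, each identity is a direct rewriting requiring no further input.
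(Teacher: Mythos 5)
Your proposal is correct and follows essentially the same route as the paper: the mirror map $g\mapsto[g\m]^\rho$ into the opposite algebra, verification that it is a partial projective $\rho^*$-representation with $\rho^*$ as factor set (hence $\rho^*\in pm(G)$), and then Proposition~\ref{prop:1to1corresp} to produce the surjective homomorphism. The only (harmless) difference is that you first run the construction on an arbitrary representation with factor set $\rho$ to get $\rho^*\in pm(G)$ and only then apply it to the canonical representation, whereas the paper does both steps at once with $[\cdot]^\rho$; your version spells out the index substitution $(g,h)\mapsto(h\m,g\m)$ that the paper leaves as ``easy to verify.''
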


\begin{proof}
   Define the map $\Gamma: G \to (\kappa_{par}^{\rho}G)^{op}$ such that
        \[
            \Gamma_g=  [g^{-1}]^\rho \in (\kappa_{par}^{\rho}G)^{op}.
        \] 
    It is easy to verify that $\Gamma$ is a partial projective $\rho^*$-representation. Furthermore, $\rho^*$ is a factor set for $\Gamma$. Indeed,
    \[
        \Gamma_g\Gamma_h= 0 \Rightarrow 0 = [g^{-1}]^\rho \star_{op} [h^{-1}]^\rho = [h^{-1}]^\rho[g^{-1}]^\rho \Rightarrow 0 = \rho(h^{-1},g^{-1}) = \rho^*(g,h).
    \]
    Hence, $\rho^* \in pm(G)$. Finally, the universal property of $\kappa_{par}^{\rho^*}G$ give us the desired map. 
\end{proof}

Observe that $\rho^{**}= \rho$, thus using Lemma \ref{l_factorinverse} twice we obtain the following commutative diagram
\[\begin{tikzcd}
	{\kappa_{par}^{\rho^*}G} & {(\kappa_{par}^{\rho}G)^{op}} \\
	{(\kappa_{par}^{\rho^*}G)^{op}} & {\kappa_{par}^{\rho}G}
	\arrow[two heads, from=1-1, to=1-2]
	\arrow[two heads, from=2-2, to=2-1]
	\arrow[tail, from=1-2, to=2-2]
	\arrow[tail, from=2-1, to=1-1]
\end{tikzcd}\]
where the horizontal maps are those obtained by Lemma \ref{l_factorinverse} for $\rho$ and $\rho^*$, and the vertical maps are the canonical anti-isomorphisms. Hence, we have proved the following proposition.

\begin{proposition} \label{p_left_and_right_partial_modules_equivalence}
    The algebras $\kappa_{par}^{\rho^*}G$ and $(\kappa_{par}^{\rho}G)^{op}$ are isomorphic. Consequently, the categories $\kappa_{par}^{\rho}G$-Mod and Mod-$\kappa_{par}^{\rho^*}G$ are isomorphic.
\end{proposition}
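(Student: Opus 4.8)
The plan is to promote the surjection $\alpha\colon \kappa_{par}^{\rho^*}G \to (\kappa_{par}^{\rho}G)^{op}$ of Lemma~\ref{l_factorinverse} (with $[g]^{\rho^*}\mapsto [g^{-1}]^\rho$) to an isomorphism by producing an explicit inverse, and then to deduce the module-category statement formally. Since $\rho^{**}=\rho$, applying Lemma~\ref{l_factorinverse} to the factor set $\rho^*$ gives a second surjection $\beta\colon \kappa_{par}^{\rho}G \to (\kappa_{par}^{\rho^*}G)^{op}$ with $[g]^{\rho}\mapsto [g^{-1}]^{\rho^*}$. The two vertical maps in the preceding diagram are the canonical anti-isomorphisms that are the identity on the underlying vector space, namely $v_2\colon (\kappa_{par}^{\rho}G)^{op}\to \kappa_{par}^{\rho}G$ and $v_1\colon (\kappa_{par}^{\rho^*}G)^{op}\to \kappa_{par}^{\rho^*}G$.

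First I would form the round-trip map $\Phi = v_1\circ\beta\circ v_2\circ\alpha\colon \kappa_{par}^{\rho^*}G \to \kappa_{par}^{\rho^*}G$. Each of $v_1,v_2$ reverses products while each of $\alpha,\beta$ preserves them, so $\Phi$ is a composite of two product-reversing and two product-preserving maps and is therefore an algebra endomorphism. I would then evaluate $\Phi$ on a generator: $\alpha$ sends $[g]^{\rho^*}$ to $[g^{-1}]^\rho$; $v_2$ leaves this element unchanged, now viewed in $\kappa_{par}^{\rho}G$; $\beta$ sends $[g^{-1}]^\rho$ to $[(g^{-1})^{-1}]^{\rho^*}=[g]^{\rho^*}$; and $v_1$ again fixes the element. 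Thus $\Phi$ fixes every generator $[g]^{\rho^*}$. Since these generate $\kappa_{par}^{\rho^*}G$ and $\Phi$ is an algebra homomorphism, $\Phi=\mathrm{id}$. Hence $v_1\circ\beta\circ v_2$ is a left inverse of $\alpha$, so $\alpha$ is injective; being surjective by Lemma~\ref{l_factorinverse}, it is a bijective algebra homomorphism, and thus the desired isomorphism $\kappa_{par}^{\rho^*}G\cong(\kappa_{par}^{\rho}G)^{op}$. (Going around the square the other way shows symmetrically that $\alpha\circ v_1\circ\beta\circ v_2=\mathrm{id}$, so $v_1\circ\beta\circ v_2$ is in fact a two-sided inverse; this is exactly the commutativity recorded in the diagram.)

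For the second assertion I would argue formally from this ring isomorphism. Taking opposite algebras gives $(\kappa_{par}^{\rho^*}G)^{op}\cong \kappa_{par}^{\rho}G$. Now right modules over a ring $R$ are exactly left modules over $R^{op}$, so $\mathrm{Mod}\text{-}\kappa_{par}^{\rho^*}G$ and $(\kappa_{par}^{\rho^*}G)^{op}\text{-}\mathrm{Mod}$ are isomorphic categories; and an isomorphism of rings induces an isomorphism of the corresponding categories of left modules, whence $(\kappa_{par}^{\rho^*}G)^{op}\text{-}\mathrm{Mod}\cong \kappa_{par}^{\rho}G\text{-}\mathrm{Mod}$. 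Composing these yields $\kappa_{par}^{\rho}G\text{-}\mathrm{Mod}\cong \mathrm{Mod}\text{-}\kappa_{par}^{\rho^*}G$.

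I expect no serious obstacle, since the substantive construction was already carried out in Lemma~\ref{l_factorinverse}; what remains is essentially bookkeeping. The one point requiring care is to keep the opposite-algebra multiplications and the group inversions aligned, so that the round trip returns $[g]^{\rho^*}$ rather than $[g^{-1}]^{\rho^*}$. This is precisely what forces the double use of the Lemma (through $\rho^{**}=\rho$) and the two anti-isomorphisms $v_1,v_2$, and it is also what makes the number of product-reversals even, so that $\Phi$ is genuinely an algebra homomorphism to which the generator argument applies.
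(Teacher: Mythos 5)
Your proposal is correct and follows essentially the same route as the paper: the paper also applies Lemma~\ref{l_factorinverse} twice (using $\rho^{**}=\rho$), composes with the canonical anti-isomorphisms, and reads off the isomorphism from the resulting commutative square. You have merely made explicit the detail the paper leaves implicit, namely that the round-trip composite is an algebra endomorphism fixing the generators $[g]^{\rho^*}$ and is therefore the identity, which upgrades the surjection to an isomorphism.
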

 
 Observe that the concept of a twisted partial group algebra extends that of the partial group algebra  (see \cite[Definition 2.4]{DEP}). Indeed, a group homomorphism  of $G$ into the group of invertible elements of an algebra is clearly a partial $\s$-representation such that $\s(g,h)=1,$ for all $g,h \in G.$ Then 
  $\kpar ^\sigma G $ coincides with the partial group algebra $\kpar G$  \medbreak

 \section{Partial projective representations and unital twisted partial actions}\label{sec:PPRvsTwistedPA}

 \bigskip

 Partial projective group representations are related to twisted partial group actions and partial group cohomology  (see \cite{DKh}, \cite{DN}, \cite{DN2}, \cite{{DoNoPi}}, \cite{DoSa}).\medbreak
 
  Recall that by $K$ we denote an arbitrary unital commutative (associative) ring. For a unital (associative) $K$-algebra $A$  we denote by  
 $\U (A)$ the group of  invertible elements of $A.$ In particular, if $I$ is an ideal in $A,$ which is generated by a central idempotent $e,$ then   $\U (I)$ means the group of the elements of $I$ which are invertible in $I$ with respect of the unity element $e$ of $I.$ In the case $I=\{ 0 \}$ we obviously have  $\U (I)=\{ 0 \}.$\medbreak

 We recall from  \cite{DES2} the following:
 
 \begin{definition}\label{def:twparac} A  \textbf{unital  twisted partial action}  of  a group $G$ on a  $K$-algebra $A$  is a triple 
 $$\Theta=(\{D_g\}_{g\in G}, \{\0_g\}_{g\in G}, \{ {\varsigma} _{g,h}\}_{(g,h)\in G\times G}),$$
 such that for every $g\in G,$ $D_g$ is  an ideal of $A,$ which is a unital algebra with unity element  $1_g,$ $\0 _g \colon D_{g\m}\to D_g$
 is a $K$-algebra isomorphism, and for each $(g,h) \in G\times G,$ ${\varsigma}_{g,h} \in \U (D_g D_{gh}),$ satisfying the following postulates\footnote{Notice that the equality $D_g=0$ is not prohibited.} for all  $g,h,t\in G:$
 \begin{enumerate} [(i)]
    \item $D_1=A$ and $\0_1$ is the identity map $A \to A,$
    \item $\0 _g(D_{g\m}D_h)=D_gD_{gh},$ 
    \item $\0_g \circ \0 _h(a)=\varsigma _{g,h}\0_{gh}(a) \varsigma \m _{g,h}$ for any $a\in D_{h\m} D_{(gh)\m},$
    \item $\varsigma_{1,g}=\varsigma_{g,1}=1_g$  and
    \item $\0_g(1_{g\m}\varsigma _{h,t})\varsigma _{g,ht}= \varsigma _{g,h} \varsigma _{gh,t}.$
 \end{enumerate}
 \end{definition}

 
 Given a unital twisted partial action $\Theta$  of $G$ on $A,$ 
  the {\it crossed product}  $A \ast_{\Theta} G$ is the direct sum:
 $$
 \bigoplus_{g \in G} D_g \de_g,
 $$ 
 in which the $\de_g$'s are placeholder symbols. The multiplication is defined by the rule: 
 $$
 (a_g {\de}_g) \cdot (b_h {\de}_h) =
  a_g {\0}_g (1_{g\m} b_h) \varsigma _{g,h} {\de}_{gh}.
 $$ 
 By \cite[Theorem 2.4]{DES1}, $A \ast _{\Theta}G$ is an associative unital $K$-algebra. If $\varsigma _{g,h} = 1_g 1_{gh}$ for all $g,h\in G,$ then we say that the twist $\varsigma $ is trivial and $\Theta $ may be identified with the family of isomorphisms $\0=\{\0_g\}_{g\in G}$ which fits the following definition:
 
 \begin{definition}\label{def:parac} A  \textbf{unital  partial action}  of  a group $G$ on an  $K$-algebra $A$ is a family of $K$-algebra isomorphisms 
 $$\0 = \{ \0 _g \colon D_{g\m}\to D_g, g\in G \},$$ where 
 each $D_g,$ $g\in G,$ is  an ideal of $A,$ generated by a central idempotent  $1_g$ of $A,$ such that  the following properties are satisfied   for all  $g,h,t\in G:$
 
 \begin{enumerate}[(i)]
    \item $D_1=A$ and $\0_1$ is the identity map $A \to A,$
    \item $\0 _g(D_{g\m}D_h)=D_gD_{gh},$
 \end{enumerate}
 \begin{enumerate}[(i')]
    \setcounter{enumi}{2}
    \item $\0_g \circ \0 _h(a)=\0_{gh}(a) $ for any $a \in D_{h\m} D_{(gh)\m}.$ \label{e_iii'}
 \end{enumerate}
 \end{definition}
 
 
 
 Thus, a unital partial action $\0 $ is a particular case of a twisted partial action. In this case, the corresponding crossed product is denoted by $A \rtimes _{\0}G$ and called the {\it partial skew group algebra}.\medbreak
 
 Obviously $1_g 1_{gh}$ is the unity element of $D_gD_{gh}$ and $(ii)$ of Definition~\ref{def:parac} implies
 \begin{equation}\label{eq:unities}
 \0_g(1_{g\m} 1_h) = 1_g 1_{gh},
 \end{equation} for all $g,h\in G.$\medbreak
 
 We are interested in the important case of a twisted partial action $\Theta$ in which  $\varsigma _{g,h}\in \U(K) 1_g 1_{gh},$ for all $g,h \in G.$  Then each $\varsigma _{g,h}$ is central in $A,$ $(iii)$ of  Definition~\ref{def:twparac} is of the form $\ref{e_iii'}$ and $\varsigma $ can be seen as a partial $2$-cocycle in the sense of \cite[Definition 1.6]{DKh}. Indeed, let $\tilde{A}$ be the $K$-subalgebra of $A$ generated by all $1_g, g\in G,$ and let $\tilde{\0}$ be the unital partial action of $G$ on $\tilde{A}$ obtained  restricting each $\0_g$ to $K1_{g\m}.$ Obviously,  $\tilde{A}$ is contained in the center  of $A$   and  considering $\tilde{A}$  as a commutative (multiplicative) monoid, $\tilde{A}$   becomes  a unital partial $G$-module as defined in \cite[Definition 1.1]{DKh}, and $\varsigma $ is an element of the group $Z^2(G,\tilde{A})$ of the partial $2$-cocycles of $G$ with values in the partial $G$-module $\tilde{A}$ (see \cite[p. 148]{DKh}). In this case we write $\Theta = (\0 , \varsigma),$ i.e.  $\Theta$ is composed by the unital partial action $\0$ and the partial $2$-cocycle   $\varsigma $ related to $\0.$\medbreak 
   
\begin{definition}\label{def:par2-cocycle}  
    Let $\0$ be a unital  partial action of  a group $G$ on a  $K$-algebra $A.$ By a \textbf{$K$-based partial $2$-cocycle} of $G$ related to $\0$ (or a \textbf{$K$-based twist} of $\theta$) we mean a function $\varsigma :  G\times G \to A, $ such that $\varsigma (g,h)=\varsigma _{g,h}\in \U(K) 1_g 1_{gh}$ for every $g,h\in G,$ and  $(\0 , \varsigma)$ is a unital twisted partial action of $G$ on $A.$
 \end{definition}
 
 The set of the $K$-based partial $2$-cocycles of $G$ related to a unital partial action $\0$ of $G$ on $A$ will be denoted by $Z^2(G, A, K).$ Clearly, $Z^2(G, A, K)= Z^2(G,\tilde{A})$ and $Z^2(G, A, K)$ is an abelian group with respect to the pointwise multiplication (see \cite[pp. 146-147]{DKh}).\medbreak

 Let $\varsigma$ be a $K$-based partial $2$-cocycle  related to a unital partial action $\0 $ of $G$ on $A.$  Write $\varsigma _{g,h} = \s (g,h) 1_g 1_{gh},$ with $\s (g,h)\in K, $ $ g,h \in G.$ We may assume that $\s (g,h) =0 $ if $1_g 1_{gh}=0.$  Then  $\s: G\times G \to K$ is a function such that for all $g,h,t\in G:$
\begin{equation}\label{domain}
  \s(g,h)=0 \Longleftrightarrow 1_g 1_{gh} = 0,
\end{equation}
\begin{equation}\label{par2-cocycle}
  1_g 1_{gh}1_{ght} \neq 0  \;\;\; \Longrightarrow \;\;\; \s(g,h) \s(gh,t) = \s(g,ht) \s(h,t).
\end{equation} 
The latter property directly follows from item $(v)$ of Definition~\ref{def:twparac}, whereas the former one is obvious. Taking $h=g\m$ in \eqref{domain} and using the evident equality $\theta_{g} (1_{g\m})=1_{g},$ we obtain, for all $g\in G,$  that
\begin{equation*}\label{1_g=0}
  1_g=0  \Longleftrightarrow  \s(g,g\m)=0 \Longleftrightarrow \s(g\m ,g)=0,
\end{equation*}
Furthermore, from $(iv)$ of Definition~\ref{def:twparac}  and taking the triple $(g\m, g, g\m)$ in \eqref{par2-cocycle} we obtain, for all $g\in G,$ that
\begin{equation}\label{1_g}
  1_g\neq 0  \Rightarrow \s(g,g\m)= \s(g\m,g) \;\; \& \;\;  \s(g,1)= \s(1,g) = 1,
\end{equation} 
so that
\begin{equation*}\label{eq:sigma}
   \s(g,g\m)= \s(g\m,g), \;\; \;\; \forall g\in G.
\end{equation*}
We call $\s$ the {\it $K$-valued twist} of $\0 $ which corresponds to the partial $2$-cocycle $\varsigma .$\medbreak

In what follows by a {\it $K$-valued partial twist} for $G$ we shall mean the  $K$-valued twist of some unital partial action $\0 $ of the group $G$ on an algebra $A$ which corresponds to some partial $2$-cocycle from $ Z^2(G, A, K).$ Note that we use the same symbol $\s $ for a $K$-valued partial twist and for a factor set. We shall see below that any element of $pm(G)$ is a $\kappa $-valued partial twist and vice versa.\medbreak
    
We proceed by recalling from \cite{DN} the construction of a unital twisted partial action from a given partial $\s$-representation.\medbreak

Let $\G :  G \to R$ be a partial projective representation in a unital $\kappa $-algebra $R$ with factor set  $\s .$ As in  \cite[Section 7]{DN} set  
\begin{equation}\label{idempotent e}
 e_g^\Gamma = \left \{  \begin{array}{ll}
        \G (g) \G (g\m)  \s (g\m , g )\m & {\rm if \ } \s(g\m, g) \neq 0, \\
 0 & {\rm if \ } \s(g\m, g) = 0.
 \end{array}  \right.
 \end{equation} 
 Since $R$ is a $\kappa $-cancellative monoid, the computations made in \cite[Section 7]{DN} are valid in our case. In particular,  it is shown in \cite[Section 7]{DN} that 
 \begin{equation}\label{Commuting e}
  (e_g^\Gamma)^2 = e_g^\Gamma, \;\;  e_g^\Gamma e_h^\Gamma = e_h^\Gamma e_g^\Gamma, \;\; 
  \Gamma(g) e_h^\Gamma = e_{gh}^\Gamma \Gamma(g),
  \;\; e_h^\Gamma \Gamma(g) = \Gamma (g) e^\Gamma_{g\m h},  \;\;\; \forall g,h \in G.
 \end{equation} 
 It follows from \eqref{parproj9}, \eqref{parproj10} and \eqref{idempotent e} that 
 \[
    \s (g\m , g)=0  \Leftrightarrow e^\Gamma_g =0 \Leftrightarrow \Gamma(g)=0 \Leftrightarrow \G (g\m)=0 \Leftrightarrow e^\Gamma_{g\m}=0 \Leftrightarrow \sigma (g , g\m)=0,   
 \] 
 for all $g\in G.$ Let $B^{\Gamma}$ be the $\kappa$-subalgebra of $R$ generated by all $e_g^\Gamma, \, (g\in G).$ In view of \eqref{Commuting e} $B^{\Gamma}$ is a commutative unital algebra. For a fixed $g\in G$ let $D^{\Gamma}_g = e_g^\Gamma B^{\G}.$ 
 By the proof of \cite[Theorem 6]{DN}
 we have the following:
 \begin{proposition}\label{prop:ParAcFromParProjRep}
  The maps  $\0^{\G} _g: D^{\G}_{g\m } \to D^{\G}_g, \, g \in G,$ defined by
 \[
    \0 ^{\G} _g(b)  = \left \{  \begin{array}{ll}
        \G (g)b \G (g\m ) \s (g\m , g)\m & {\rm if \ }  e_{g\m }^\Gamma \neq 0, \\
        0 & {\rm if \ }  e_{g\m }^\Gamma = 0,
        \end{array}  \right. 
 \]
 $b\in D^{\G}_{g\m },$ form a unital  partial action $\0 ^{\G} $ of $G$ on  $B^{\G},$ and   
 $\s $ is a $\kappa $-valued twist of $\0 ^{\G}.$ 
 \end{proposition}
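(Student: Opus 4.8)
The plan is to follow the argument in the proof of \cite[Theorem 6]{DN}, observing that, since the unital $\kappa$-algebra $R$ is in particular a $\kappa$-cancellative monoid, every computation carried out there in \cite[Section 7]{DN} remains valid. Thus I may freely use the relations \eqref{Commuting e}, the chain of equivalences following \eqref{idempotent e}, and the identities $\s(g,g\m)=\s(g\m,g)$ from \eqref{parproj10} and $\s(g,h)=0\Leftrightarrow\G(g)\G(h)=0$ from \eqref{parproj8}. Note first that $e_1^{\G}=1_R$, so $D_1^{\G}=B^{\G}$, and that each $D_g^{\G}=e_g^{\G}B^{\G}$ is an ideal of the commutative unital algebra $B^{\G}$ generated by the central idempotent $e_g^{\G}$. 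The computational engine of the whole proof is the single identity
\[
    \0^{\G}_g(e_{g\m}^{\G}e_h^{\G})=e_g^{\G}e_{gh}^{\G},\qquad g,h\in G,
\]
which I obtain by pushing the idempotents through $\G(g)$ via \eqref{Commuting e} and then using $\G(g)\G(g\m)=\s(g\m,g)e_g^{\G}$, a restatement of \eqref{idempotent e}.

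Next I would check that each $\0_g^{\G}$ is a $\kappa$-algebra isomorphism $D_{g\m}^{\G}\to D_g^{\G}$. When $e_{g\m}^{\G}=0$ the domain is $0$ and there is nothing to do; otherwise $\kappa$-linearity is clear, multiplicativity follows from $\G(g\m)\G(g)b=\s(g\m,g)b$ for $b\in D_{g\m}^{\G}$ (using \eqref{parproj10}), and the displayed identity together with $e_g^{\G}\G(g)=\G(g)$ shows the image lies in $D_g^{\G}$. I would then verify the three axioms of Definition~\ref{def:parac}. Axiom (i) is immediate since $\G(1_G)=1_R$ and $\s(1,1)=1$. For axiom (ii), the isomorphism $\0_g^{\G}$ carries the ideal generated by $e_{g\m}^{\G}e_h^{\G}$ onto the ideal generated by $e_g^{\G}e_{gh}^{\G}$, which is exactly $\0_g^{\G}(D_{g\m}^{\G}D_h^{\G})=D_g^{\G}D_{gh}^{\G}$. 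For axiom (iii') it suffices, since all maps involved are ring homomorphisms and $B^{\G}$ is generated by the $e_t^{\G}$, to evaluate $\0_g^{\G}\0_h^{\G}$ and $\0_{gh}^{\G}$ on the generators $e_{h\m}^{\G}e_{(gh)\m}^{\G}e_t^{\G}$ of the domain $D_{h\m}^{\G}D_{(gh)\m}^{\G}$; two applications of the displayed identity show that both sides equal $e_g^{\G}e_{gh}^{\G}e_{ght}^{\G}$. Bijectivity of $\0_g^{\G}$ then follows from axiom (iii') with $h=g\m$, which exhibits $\0_{g\m}^{\G}$ as a two-sided inverse.

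Finally I would show that $\s$ is a $\kappa$-valued twist of $\0^{\G}$, i.e. that $\varsigma_{g,h}:=\s(g,h)\,e_g^{\G}e_{gh}^{\G}$ turns $(\0^{\G},\varsigma)$ into a unital twisted partial action (Definition~\ref{def:twparac}) matching Definition~\ref{def:par2-cocycle}. Since each $\varsigma_{g,h}$ is a scalar multiple of the central idempotent $e_g^{\G}e_{gh}^{\G}$, the conjugation in item (iii) of Definition~\ref{def:twparac} is trivial and reduces to the already-proved axiom (iii'). The requirement $\varsigma_{g,h}\in\U(\kappa)e_g^{\G}e_{gh}^{\G}$ is the equivalence \eqref{domain}, $\s(g,h)=0\Leftrightarrow e_g^{\G}e_{gh}^{\G}=0$, which I get from $\G(g)\G(h)=e_g^{\G}e_{gh}^{\G}\G(g)\G(h)$ and $e_g^{\G}e_{gh}^{\G}=\s(g\m,g)\m\s(h\m,h)\m\,\G(g)\G(h)\G(h\m)\G(g\m)$ together with \eqref{parproj8}; the normalization $\s(g,1)=\s(1,g)=1$ for $e_g^{\G}\neq0$ follows from \eqref{parproj3} with $h=1_G$ and $\kappa$-cancellativity.

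The heart of the matter, and the step I expect to be the main obstacle, is the $2$-cocycle identity \eqref{par2-cocycle}. I would prove it by reducing the triple product $\G(g)\G(h)\G(t)a$, for $a\in D_{t\m}^{\G}D_{(ht)\m}^{\G}$, in two ways by means of the rule $\G(x)\G(y)b=\s(x,y)\G(xy)b$ valid for $b\in D_{y\m}^{\G}$ (itself a consequence of \eqref{parproj3}): grouping as $\G(g)\bigl(\G(h)\G(t)\bigr)a$ gives $\s(h,t)\s(g,ht)\G(ght)a$, while grouping as $\bigl(\G(g)\G(h)\bigr)\G(t)a$ gives $\s(g,h)\s(gh,t)\G(ght)a$. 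Taking $a=e_{t\m}^{\G}e_{(ht)\m}^{\G}$ and noting that $\G(ght)a=e_g^{\G}e_{gh}^{\G}e_{ght}^{\G}\G(ght)$ is nonzero whenever $e_g^{\G}e_{gh}^{\G}e_{ght}^{\G}\neq0$ (right-multiply by $\G((ght)\m)$ and use $\G(ght)\G((ght)\m)=\s((ght)\m,ght)e_{ght}^{\G}$ with a nonzero scalar), $\kappa$-cancellativity forces $\s(g,h)\s(gh,t)=\s(g,ht)\s(h,t)$, which is \eqref{par2-cocycle}. The delicate point throughout is the bookkeeping of the domains $D_{y\m}^{\G}$ so that each reduction rule actually applies; once this is handled, $\kappa$-cancellativity converts the scalar identities in $R$ into the required equalities of $\kappa$-valued twists, exactly as in the proof of \cite[Theorem 6]{DN}.
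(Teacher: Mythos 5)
Your proposal is correct and takes essentially the same route as the paper, which proves this proposition simply by citing the proof of \cite[Theorem 6]{DN} after observing that a unital $\kappa$-algebra is in particular a $\kappa$-cancellative monoid, so the computations of \cite[Section 7]{DN} apply verbatim --- exactly your stated plan, and your fleshed-out verification (the identity $\0^{\G}_g(e^{\G}_{g\m}e^{\G}_h)=e^{\G}_g e^{\G}_{gh}$, the axioms of Definition~\ref{def:parac}, and the two-way reduction yielding \eqref{domain} and \eqref{par2-cocycle}) is sound. The one point to polish is the domain bookkeeping you yourself flag: in the grouping $\bigl(\G(g)\G(h)\bigr)\G(t)a$ your rule $\G(x)\G(y)b=\s(x,y)\G(xy)b$ is applied to $b=\G(t)a$, which lies in $e_{h\m}^{\G}R$ rather than in $D_{h\m}^{\G}$, so the rule should be stated (as its proof via $e_{y\m}^{\G}=\G(y\m)\G(y)\s(y,y\m)\m$ already shows) for every $b\in R$ satisfying $e_{y\m}^{\G}b=b$.
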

 
 It immediately follows that any $\s \in pm(G)$ is a $\kappa $-valued partial twist. As we mentioned already, the converse is also true and this is a direct consequence of the next fact, which gives an important example of a partial $\s$-representation.
  
  \begin{proposition}\label{prop:ImportantParSigmaRep} (see also \cite[Theorem 8]{DN}). Let $\0$ be a unital partial action  of $G$ on a $\kappa $-algebra $A,$  $ \varsigma \in Z^2(G,A,\kappa )$  and $\s$ be the corresponding $\kappa $-valued twist. Then the map $\G_\sigma : G \to A \ast _{\0,\varsigma } G,$ given by $g \mapsto 1_g \delta _g,$ is a partial projective representation and $\s$ is a factor set for $\G_\sigma.$ In particular, $\s \in pm(G).$
  \end{proposition}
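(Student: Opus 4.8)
The plan is to verify directly that the map $\G_\s\colon g\mapsto 1_g\de_g$ satisfies the four postulates \eqref{parproj1}--\eqref{parproj4} of Definition~\ref{def:parproj} with the given $\s$ as associated function, and then to check the extra requirement that makes $\s$ a \emph{factor set}, namely that $\G_\s(g)\G_\s(h)=0$ forces $\s(g,h)=0$. Everything will follow from a single basic computation of the product in $A\ast_{\0,\varsigma}G$. Using the multiplication rule of the crossed product together with \eqref{eq:unities} and the hypothesis $\varsigma_{g,h}=\s(g,h)\,1_g1_{gh}$, I would obtain
\[
  \G_\s(g)\,\G_\s(h)=(1_g\de_g)(1_h\de_h)=\s(g,h)\,1_g1_{gh}\,\de_{gh},
\]
and more generally the identity $\G_\s(a)\G_\s(b)=\s(a,b)\,1_a1_{ab}\,\de_{ab}$ for all $a,b\in G$. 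All subsequent verifications reduce to substituting particular triples into this formula and keeping track of the commuting central idempotents $1_g$.

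Postulate \eqref{parproj4} is immediate, since $1_{1_G}\de_{1_G}$ is the unity of the crossed product. For the factor set property I would argue from $\G_\s(g)\G_\s(h)=\s(g,h)\,1_g1_{gh}\de_{gh}$: if $1_g1_{gh}=0$ then $\s(g,h)=0$ by \eqref{domain}, while if $1_g1_{gh}\neq 0$ then $\s(g,h)\,1_g1_{gh}=0$ forces the scalar $\s(g,h)\in\kappa$ to vanish. This yields the equivalence $\G_\s(g)\G_\s(h)=0\Leftrightarrow\s(g,h)=0$, which is exactly the factor set condition, and it simultaneously gives \eqref{parproj1}: when $\s(g,h)=0$ we have $1_g1_{gh}=0$, and applying the isomorphism $\0_{g\m}$, which sends $1_g1_{gh}$ to $1_{g\m}1_h$ by \eqref{eq:unities}, produces $1_{g\m}1_h=0$; substituting into the basic formula shows that both $\G_\s(g\m)\G_\s(gh)$ and $\G_\s(gh)\G_\s(h\m)$ vanish.

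For \eqref{parproj2} I would evaluate both sides through the basic formula: the left side becomes $\s(g\m,g)\,1_{g\m}1_h\,\de_h$ and the right side becomes $\s(g,h)\s(g\m,gh)\,1_{g\m}1_h\,\de_h$. If $1_{g\m}1_h=0$ both vanish; otherwise it suffices to check the scalar identity $\s(g\m,g)=\s(g,h)\s(g\m,gh)$, which I would read off from the $2$-cocycle relation \eqref{par2-cocycle} applied to the triple $(g\m,g,h)$, using $\s(1,h)=1$ from \eqref{1_g}. Postulate \eqref{parproj3} is the lightest step: a direct evaluation shows that both $\G_\s(g)\G_\s(h)\G_\s(h\m)$ and $\s(g,h)\G_\s(gh)\G_\s(h\m)$ equal $\s(g,h)\s(gh,h\m)\,1_g1_{gh}\,\de_g$, so the two sides coincide on the nose and no cocycle identity is required. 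Once all four postulates and the factor set condition hold, $\s$ is a factor set of the partial projective representation $\G_\s$, so $\s\in pm(G)$.

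The only genuinely delicate point is \eqref{parproj2}, where the cocycle identity \eqref{par2-cocycle} is needed but is only available under the nonvanishing hypothesis on the relevant product of idempotents; splitting into the cases $1_{g\m}1_h=0$ and $1_{g\m}1_h\neq 0$ handles this cleanly. Everything else is routine bookkeeping with the central idempotents and repeated use of \eqref{eq:unities}.
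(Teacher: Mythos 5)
Your proof is correct and its backbone is the same as the paper's: everything is derived from the single computation $\G_\s(g)\G_\s(h)=(1_g\de_g)(1_h\de_h)=\s(g,h)1_g1_{gh}\de_{gh}$, from which the factor-set equivalence $\G_\s(g)\G_\s(h)=0\Leftrightarrow\s(g,h)=0$ and postulate \eqref{parproj1} (via \eqref{domain} and \eqref{eq:unities}) follow exactly as in the paper. The one place where you genuinely diverge is \eqref{parproj2}. The paper brackets the triple product as $\G_\s(g\m)\bigl[\G_\s(g)\G_\s(h)\bigr]$, which evaluates to $\s(g,h)\s(g\m,gh)1_{g\m}1_h\de_h$ --- literally the same expression as the right-hand side $\s(g,h)\G_\s(g\m)\G_\s(gh)$ --- so no identity among the scalars is ever invoked. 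You instead bracket as $\bigl[\G_\s(g\m)\G_\s(g)\bigr]\G_\s(h)$, obtaining $\s(g\m,g)1_{g\m}1_h\de_h$, and must then appeal to the cocycle relation \eqref{par2-cocycle} for the triple $(g\m,g,h)$ together with $\s(1,h)=1$ from \eqref{1_g}, after splitting off the degenerate case $1_{g\m}1_h=0$. Both routes are legitimate, since associativity of $A\ast_{\0,\varsigma}G$ (\cite[Theorem 2.4]{DES1}) makes the two bracketings agree; the paper's choice buys a verification that never touches the cocycle identity, while yours makes explicit exactly where that identity enters, at the cost of the extra case analysis. Your remark that \eqref{parproj3} needs no cocycle identity under the bracketing $\bigl[\G_\s(g)\G_\s(h)\bigr]\G_\s(h\m)$ is also right, and is consistent with the paper's ``shown similarly.''
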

\begin{proof}
    Notice  that
     \[
       \G_\sigma (g) \G_\sigma(h )= 1_g \delta _g 1_{h} \delta _{h}= \s(g,h)1_g1_{gh} \delta _{gh},
     \]so that
 $\G_\sigma (g) \G_\sigma(h ) \Leftrightarrow \s(g,h)=0. $ Moreover,
 \[
    \G_\sigma (g\m)\G_\sigma (g) \G_\sigma(h)=1_{g\m} \delta_{g\m} \s(g,h)1_g1_{gh} \delta _{gh} = \s(g,h)\s(g\m,gh)1_{g\m}1_{h} \delta _{h}   
 \]
 and
 \[
    \s(g,h)\G_\sigma (g\m)\G_\sigma (gh )= \s(g,h)1_{g\m} \delta _{g\m} 1_{gh} \delta_{gh} = \s(g,h)\s(g\m,gh)1_{g\m}1_{h} \delta_{h},   
 \]
 proving \eqref{parproj2}. The equality \eqref{parproj3} is shown similarly. If $\sigma (g,h)=0$ then $1_g 1_{gh}=0,$ which by \eqref{eq:unities} implies $1_{g\m}1_{h}=0,$ and the above calculation shows that $\G_\sigma (g\m)\G_\sigma (gh )=0.$ The second equality in \eqref{parproj1} is seen analogously.
\end{proof}
    
Now Proposition~\ref{prop:ImportantParSigmaRep} immediately gives us the next:
\begin{corollary}\label{cor:hom}  
   The map $\tilde{\Gamma}_\sigma : \kappa_{par}^{\sigma} G \to A \ast _{\0,\varsigma } G,$ such that  $[g]^\sigma \mapsto 1_g \delta_g,$  $g \in G,$ is a homomorphism of $\kappa$-algebras.
\end{corollary}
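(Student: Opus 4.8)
The plan is to recognize that this corollary is nothing more than the composition of two facts already in hand: the explicit partial projective representation produced in Proposition~\ref{prop:ImportantParSigmaRep} and the universal property of the twisted partial group algebra recorded in Proposition~\ref{prop:1to1corresp}. Since both ingredients are established, the proof should be a one-line deduction, and I would present it as such.

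First I would invoke Proposition~\ref{prop:ImportantParSigmaRep}. It tells us that, for the given unital partial action $\0$ of $G$ on $A$ and the cocycle $\varsigma \in Z^2(G,A,\kappa)$ with associated $\kappa$-valued twist $\s$, the map $\G_\sigma \colon G \to A \ast_{\0,\varsigma} G$ defined by $g \mapsto 1_g \delta_g$ is a partial projective representation in the unital $\kappa$-algebra $A \ast_{\0,\varsigma} G$, and moreover that $\s$ is a factor set for $\G_\sigma$ with $\s \in pm(G)$. Because $\G_\sigma$ and $\s$ satisfy the postulates \eqref{parproj1}--\eqref{parproj4}, this says precisely that $\G_\sigma$ is a partial projective $\s$-representation in the sense of Definition~\ref{def:sigma-repr}.

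Next I would apply Proposition~\ref{prop:1to1corresp} with $R = A \ast_{\0,\varsigma} G$ and $\G = \G_\sigma$. The universal property yields a unique $\kappa$-algebra homomorphism $\tilde{\Gamma}_\sigma \colon \kpar^\sigma G \to A \ast_{\0,\varsigma} G$ with $\tilde{\Gamma}_\sigma \circ [\cdot]^\sigma = \G_\sigma$; evaluating on the generators gives $\tilde{\Gamma}_\sigma([g]^\sigma) = \G_\sigma(g) = 1_g \delta_g$ for every $g \in G$, which is exactly the map in the statement. Since it arises from the universal property, it is automatically a $\kappa$-algebra homomorphism.

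There is no genuine obstacle at this stage: the substantive verification (that $\G_\sigma$ satisfies the defining relations of a partial $\s$-representation and that $\s$ is a factor set) was carried out in the proof of Proposition~\ref{prop:ImportantParSigmaRep}, and the passage from a partial $\s$-representation to an algebra homomorphism is the content of Proposition~\ref{prop:1to1corresp}. Accordingly, I expect the entire argument to consist of citing these two results and reading off the action on generators, with the only point worth a word being the explicit identification $\tilde{\Gamma}_\sigma([g]^\sigma) = 1_g \delta_g$.
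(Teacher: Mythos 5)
Your proof is correct and is exactly the paper's argument: the paper states that Corollary~\ref{cor:hom} follows immediately from Proposition~\ref{prop:ImportantParSigmaRep}, which shows $g \mapsto 1_g\delta_g$ is a partial $\s$-representation, combined (implicitly) with the universal property of Proposition~\ref{prop:1to1corresp}. Your identification of the two ingredients and the reading-off of the action on generators matches the intended one-line deduction.
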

  
Corollary~\ref{cor:hom} implies, in particular, that, for all $g\in G,$ we have that $1_g \neq 0  \Rightarrow [g]^\sigma \neq 0.$ Furthermore, if $\s(g,h) \neq 0,$ then $1_g 1_{gh} \neq 0$ and
 \[
 [g]^\sigma [h]^\sigma \mapsto 1_g \delta _g \; 1_h \delta _h =
 1_g \0_g (1_{g\m}1_h) \delta _{gh} = 1_g 1_{gh} \delta _{gh} \neq 0.
 \] 
 This yields that $[g]^\sigma [h]^\sigma \neq 0.$ In addition,
 $1_g 1_{gh} \neq 0$ implies $0\neq 0_{g\m}(1_g 1_{gh})
 = 1_{g\m} 1_{h}.$ Then
 \[
 [g\m]^\sigma [gh]^\sigma \mapsto 1_{g\m} \delta _{g\m} \; 1_{gh} \delta _{gh} =
 1_{g\m} \0_{g\m} (1_{g}1_{gh}) \delta _{h} = 1_{g\m}  1_{h} \delta _{h} \neq 0.
 \]
 Hence, $[g\m]^\sigma [gh]^\sigma \neq 0.$ 
 
 In the above calculations, $\s$ was supposed to be $\kappa $-valued partial twist. However, we know already that any element of $pm(G)$  is a $\kappa $-valued partial twist and vice versa. So  recalling the defining relations of $ \kpar ^\sigma G$ we obtain for an arbitrary $\s \in pm(G)$  that
 \begin{equation}\label{parTwiD}
 \s(g,g\m)=0 \Leftrightarrow [g]^\sigma =0 \Leftrightarrow [g\m]^\sigma =0,\;\;\; \forall g\in G,
 \end{equation}  
 and 
 \begin{equation}\label{parTwiE}
 \s(g,h)=0 \Leftrightarrow [g]^\sigma[h]^\sigma =0 \Leftrightarrow [g\m]^\sigma[gh]^\sigma =0 \Leftrightarrow [gh]^\sigma[h\m]^\sigma =0, \;\;\;  \forall g,h\in G.
 \end{equation} 
 In particular,  \eqref{parTwiE} implies the following:
 
\begin{corollary}\label{cor:FactorSetFor[]} 
   For any $\s \in pm(G)$ we have that $\s $  is the factor set of the canonical partial  $\s$-representation $[\cdot]^\sigma : G \to \kpar ^\sigma G .$
\end{corollary}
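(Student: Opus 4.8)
The plan is to unwind the definition of \emph{factor set}. By the discussion culminating in \eqref{parproj8}, saying that $\s$ is the factor set of the partial $\s$-representation $[\cdot]^\sigma$ is exactly saying that $\s(g,h)=0 \Leftrightarrow [g]^\sigma[h]^\sigma=0$ for all $g,h\in G$. Since $[\cdot]^\sigma$ is by construction a partial projective $\s$-representation (its generators satisfy the defining relations of Definition~\ref{def:ParTwiGrAlg}), the implication $\s(g,h)=0 \Rightarrow [g]^\sigma[h]^\sigma=0$ is already available as \eqref{parTwiB}, equivalently as the general property \eqref{parproj6} applied to $[\cdot]^\sigma$. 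So the only thing left to establish is the reverse implication $[g]^\sigma[h]^\sigma=0 \Rightarrow \s(g,h)=0$.

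For that I would argue by contraposition and show $\s(g,h)\neq 0 \Rightarrow [g]^\sigma[h]^\sigma\neq 0$. The key input is that every $\s\in pm(G)$ is a $\kappa$-valued partial twist, so there is a unital partial action $\0$ of $G$ on some $\kappa$-algebra $A$ together with a partial $2$-cocycle $\varsigma\in Z^2(G,A,\kappa)$ whose $\kappa$-valued twist is $\s$ (Proposition~\ref{prop:ImportantParSigmaRep}). Then Corollary~\ref{cor:hom} furnishes a $\kappa$-algebra homomorphism $\tilde{\G}_\sigma:\kpar^\sigma G \to A\ast_{\0,\varsigma}G$ with $[g]^\sigma\mapsto 1_g\delta_g$. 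Applying it to the product gives $\tilde{\G}_\sigma([g]^\sigma[h]^\sigma)=1_g\delta_g\,1_h\delta_h=\s(g,h)\,1_g1_{gh}\delta_{gh}$, and when $\s(g,h)\neq 0$ the relation \eqref{domain} yields $1_g1_{gh}\neq 0$, so this image is a nonzero element of the crossed product. Since a $\kappa$-algebra homomorphism maps $0$ to $0$, a nonzero image forces $[g]^\sigma[h]^\sigma\neq 0$, as required. Combining the two implications yields the chain of equivalences recorded in \eqref{parTwiE}, which is precisely the assertion that $\s$ is the factor set of $[\cdot]^\sigma$.

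The substantive step is the nonvanishing of $1_g1_{gh}\delta_{gh}$ inside $A\ast_{\0,\varsigma}G$; everything else is formal bookkeeping with the defining relations. That nonvanishing depends entirely on realizing an arbitrary $\s\in pm(G)$ as the $\kappa$-valued twist of a genuine unital twisted partial action, which was set up earlier via Proposition~\ref{prop:ParAcFromParProjRep} and Proposition~\ref{prop:ImportantParSigmaRep}, together with the induced homomorphism $\tilde{\G}_\sigma$. Once those are in hand the corollary is immediate; indeed the statement is presented in the text as a direct consequence of \eqref{parTwiE}.
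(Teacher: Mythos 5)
Your proposal is correct and follows essentially the same route as the paper: the forward implication is the defining relation \eqref{parTwiB}, and the reverse one is obtained by pushing $[g]^\sigma[h]^\sigma$ through the homomorphism $\tilde{\Gamma}_\sigma$ of Corollary~\ref{cor:hom} into a crossed product realizing $\sigma$ as a $\kappa$-valued twist, where the image $\sigma(g,h)\,1_g 1_{gh}\delta_{gh}$ is nonzero by \eqref{domain}, giving the equivalences \eqref{parTwiE}. The only nitpick is attribution: the realization of an arbitrary $\sigma\in pm(G)$ as a twist comes from Proposition~\ref{prop:ParAcFromParProjRep} (Proposition~\ref{prop:ImportantParSigmaRep} provides the converse direction), which you state correctly in your closing paragraph.
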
 
Observe that the condition $\G (1_G) = 1_R \neq 0$ guarantees that $\kpar ^\sigma G \neq 0.$ Indeed, taking 
  $g=h=1_G$ in \eqref{parproj2}  we obtain  
   $\s(1_G, 1_G)=1$ which gives $1_{\kappa_{par}^{\sigma}G} = [1_G]^\sigma \neq 0$ in view of \eqref{parTwiD}.\medbreak
  
  Propositions~\ref{prop:ParAcFromParProjRep} and \ref{prop:ImportantParSigmaRep} imply:
  
\begin{corollary}\label{cor:FactorSets=Twists}
  The semigroup $pm(G)$ coincides the set of all $\kappa $-valued partial twists for $G.$ 
\end{corollary}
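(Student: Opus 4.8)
The statement is the equality of two sets of maps $G\times G\to\kappa$, namely $pm(G)$ and the set of $\kappa$-valued partial twists for $G$; the plan is to check the two inclusions separately, each being an immediate reading of one of the two preceding propositions against the corresponding definition.

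First I would prove that $pm(G)$ is contained in the set of $\kappa$-valued partial twists. Let $\s\in pm(G)$. By the definition of $pm(G)$, the map $\s$ is a factor set of some partial projective representation of $G$ into a $\kappa$-cancellative monoid, and by Remark~\ref{rem:ParFactorSets} we may take this representation to be a partial projective representation $\G\colon G\to R$ with values in a unital $\kappa$-algebra $R$, with factor set $\s$. Proposition~\ref{prop:ParAcFromParProjRep} applied to $\G$ then yields a unital partial action $\0^{\G}$ of $G$ on the commutative subalgebra $B^{\G}$ of $R$ together with the conclusion that $\s$ is a $\kappa$-valued twist of $\0^{\G}$. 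By the definition of a $\kappa$-valued partial twist, this exhibits $\s$ as such a twist.

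Conversely, I would prove the reverse inclusion. Let $\s$ be a $\kappa$-valued partial twist for $G$. By definition, $\s$ is the $\kappa$-valued twist of some unital partial action $\0$ of $G$ on a $\kappa$-algebra $A$ corresponding to some partial $2$-cocycle $\varsigma\in Z^2(G,A,\kappa)$. Proposition~\ref{prop:ImportantParSigmaRep} shows that the map $\G_\sigma\colon G\to A\ast_{\0,\varsigma}G$, $g\mapsto 1_g\delta_g$, is a partial projective representation for which $\s$ is a factor set, and hence $\s\in pm(G)$.

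Putting the two inclusions together gives the claimed equality, so the corollary follows formally from Propositions~\ref{prop:ParAcFromParProjRep} and~\ref{prop:ImportantParSigmaRep}. There is no real obstacle here, since the substantive work is already carried out inside those two propositions; the only point requiring attention is the bookkeeping between the two notions of ``twist''---one attached to a partial projective representation (via the idempotents $e_g^{\G}$ and the algebra $B^{\G}$) and one attached to a partial action together with a partial $2$-cocycle---which is precisely the bridge that the two propositions build in opposite directions.
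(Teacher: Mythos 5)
Your proof is correct and is exactly the paper's argument: the paper derives the corollary immediately from Proposition~\ref{prop:ParAcFromParProjRep} (giving $pm(G)\subseteq$ twists, via the partial action $\0^{\G}$ on $B^{\G}$) and Proposition~\ref{prop:ImportantParSigmaRep} (giving twists $\subseteq pm(G)$, via $g\mapsto 1_g\delta_g$). Your explicit unpacking of the two inclusions, including the appeal to Remark~\ref{rem:ParFactorSets} to pass to algebra-valued representations, is faithful bookkeeping of what the paper leaves implicit.
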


   We know that  $\Kpar G$ is the skew group algebra by a unital partial action of $G$ (see \cite[Theorem 6.9]{DE1} or \cite[Theorem 10.9]{E6}).  We are going to show that  it is possible to  endow 
   $\kpar ^\sigma G$ with the structure of a  crossed product by a unital twisted partial action of $G.$ It will be helpful for us to introduce a  twisted version of the useful notion of a covariant representation of a partial action (see \cite[Definition 9.10]{E6}).

\begin{definition}\label{def:Covariant} Let  $\s$ be a $\kappa $-valued twist
   of a unital partial action 
   $$\0 = \{\0_g: D_{g\m} \to D_g \}_{g \in G } $$ of a group $G$ on an algebra $A.$
   A \textbf{covariant $\s$-representation} of $\0$ in a unital algebra $R$ is a 
   pair $(\pi, \Gamma),$ where $\pi : A \to R$ is a homomorphism of $K$-algebras and $\G : G \to R $ is a partial $\s$-representation 
   such that for each $g\in G$  the equality
   $$\G (g) \pi (a) \G (g\m)  = \s (g,g\m) \pi (\0_g(a)),$$
   holds for all $ a \in D_{g\m}.$
   \end{definition}
  A  covariant $\s$-representation is a tool to construct representations of the crossed product by a unital partial action twisted by $\s $ as shown in the next:
  
  \begin{proposition}\label{prop:covariant}  Let  $\s$ be a $\kappa $-valued twist
   of a unital partial action $\0 $ of a group $G$ on an algebra $A$
   and $(\pi, \Gamma)$ a  covariant $\s$-representation of $G$  in a unital algebra $R.$  Then the map $\pi \times \G : A\ast _{\0, \s} G \to R$ determined by 
  $$a \delta _g \mapsto \pi (a) \G(g), \;\;\; \forall g\in G, \;\;\; \forall a\in D_{g}, $$
  is a homomorphism of $\kappa $-algebras.
  \end{proposition}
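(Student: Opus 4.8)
The plan is to check directly that the $\kappa$-linear map $\pi\times\G$ is multiplicative. It is well defined and $\kappa$-linear because $A\ast_{\0,\s}G=\bigoplus_{g\in G}D_g\delta_g$ is a direct sum on whose $g$-th summand $a\delta_g\mapsto\pi(a)\G(g)$ is linear, while unitality is immediate from $\G(1_G)=1_R$ and $\pi(1_A)=1_R$. By bilinearity it suffices to test multiplicativity on homogeneous elements $a\delta_g$, $b\delta_h$ with $a\in D_g$, $b\in D_h$. Since $\varsigma_{g,h}=\s(g,h)1_g1_{gh}$ and $a\,\0_g(1_{g\m}b)\in D_gD_{gh}$, the multiplication rule gives $(a\delta_g)(b\delta_h)=\s(g,h)\,a\,\0_g(1_{g\m}b)\,\delta_{gh}$, so the whole statement reduces to the core identity
\[
\pi(a)\,\G(g)\,\pi(b)\,\G(h)=\s(g,h)\,\pi\big(a\,\0_g(1_{g\m}b)\big)\,\G(gh),\qquad a\in D_g,\ b\in D_h.
\]

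I would first establish three relations about the idempotents $e_g^\G$ of \eqref{idempotent e}. (ii) For $w\in D_g$ one has $\pi(w)e_g^\G=\pi(w)$: writing $w=\0_g(\0_{g\m}(w))$ and using the covariance identity to express $\pi(w)=\s(g,g\m)\m\,\G(g)\pi(\0_{g\m}(w))\G(g\m)$, this follows from $\G(g\m)e_g^\G=\G(g\m)$, which is immediate from \eqref{parproj3}--\eqref{parproj4}. (i) For $c\in D_{g\m}$ the intertwining relation $\G(g)\pi(c)=\pi(\0_g(c))\G(g)$ holds: right-multiplying the covariance identity by $\G(g)$ and using $\G(g\m)\G(g)=\s(g,g\m)e_{g\m}^\G$ yields $\G(g)\pi(c)e_{g\m}^\G=\pi(\0_g(c))\G(g)$, and the spurious idempotent is deleted because $\pi(c)=\pi(c)\pi(1_{g\m})$ while $\pi(1_{g\m})e_{g\m}^\G=\pi(1_{g\m})$ by (ii). (iii) $\G(g)\G(h)=\s(g,h)\,e_g^\G\,\G(gh)$, obtained by left-multiplying \eqref{parproj2} by $\G(g)$ and simplifying with \eqref{parproj3}, \eqref{parproj4} and the symmetry $\s(g,g\m)=\s(g\m,g)$.

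With these in hand the core identity falls out cleanly. Applying (i) to $c=1_{g\m}$ together with $\0_g(1_{g\m})=1_g$ and $a1_g=a$ gives the \emph{prefixed absorption} $\pi(a)\G(g)=\pi(a)\G(g)\pi(1_{g\m})$ for $a\in D_g$. Hence, for any $b$,
\[
\pi(a)\G(g)\pi(b)=\pi(a)\G(g)\pi(1_{g\m}b)=\pi\big(a\,\0_g(1_{g\m}b)\big)\G(g),
\]
the last step by (i), since $1_{g\m}b\in D_{g\m}$. Multiplying on the right by $\G(h)$, then invoking (iii), and finally using (ii) for the element $a\,\0_g(1_{g\m}b)\in D_g$ to erase the factor $e_g^\G$, produces exactly the right-hand side of the core identity.

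The step I expect to be the main obstacle is precisely that the naive absorption $\G(g)\pi(1_{g\m})=\G(g)$ is \emph{false} in general: the idempotent $e_g^\G$ need not coincide with $\pi(1_g)$, so one cannot cancel the leading $\pi(a)$ and argue with $\G(g)\pi(b)\G(h)$ alone (the complementary component $(1-1_{g\m})b$ survives). The device that rescues the computation is to keep the prefix $\pi(a)$ with $a\in D_g$ and to use (ii), namely that $e_g^\G$ acts as the identity precisely on $\pi(D_g)$. Finally, the degenerate cases are harmless: if $\s(g,g\m)=0$ then $1_g=0$, whence $D_g=0$ and $\G(g)=0=e_g^\G$ by \eqref{parproj5}, so every identity above reduces to $0=0$.
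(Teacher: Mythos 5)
Your proof is correct, and it reorganizes the argument around a different set of lemmas than the paper uses. The paper's proof establishes exactly two auxiliary identities: $\pi(c)\G(g\m)\G(g)=\s(g\m,g)\pi(c)$ for $c\in D_{g\m}$ (its \eqref{PiGamma}) and the single intertwining instance $\pi(1_g)\G(g)=\G(g)\pi(1_{g\m})$ (its \eqref{PiGamma2}), and then runs one long chain starting from $(\pi\times\G)\bigl((a\delta_g)(b\delta_h)\bigr)$: it unpacks $\pi\bigl(a\,\0_g(1_{g\m}b)\bigr)$ via covariance, re-contracts $\s(g,h)\G(g\m)\G(gh)=\G(g\m)\G(g)\G(h)$ using \eqref{parproj2}, absorbs $\G(g\m)\G(g)$ with \eqref{PiGamma}, and finishes with \eqref{PiGamma2}. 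Your route proves more structured statements first: your (ii) is equivalent to the paper's \eqref{PiGamma} (since $\G(g\m)\G(g)=\s(g\m,g)e^\G_{g\m}$), but your (i), the full intertwining relation $\G(g)\pi(c)=\pi(\0_g(c))\G(g)$ for \emph{all} $c\in D_{g\m}$, strictly generalizes \eqref{PiGamma2}, and your (iii), $\G(g)\G(h)=\s(g,h)\,e^\G_g\,\G(gh)$, packages \eqref{parproj2}--\eqref{parproj3} into a product formula the paper never states; the multiplicativity check then becomes a three-line composition rather than one long computation. What your version buys is modularity and reusable structure — (i) says $\pi\times\G$ genuinely intertwines $\0_g$ with conjugation by $\G(g)$, and (ii)--(iii) tie the argument to the idempotent calculus of \eqref{idempotent e} used elsewhere in Section~3 — at the cost of invoking $e^\G_g$, which the paper's proof of this proposition avoids entirely. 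Both proofs navigate the same trap, which you identify correctly: $\G(g)\pi(1_{g\m})=\G(g)$ fails in general (equality would force $\pi(1_g)=e^\G_g$), so the prefix $\pi(a)$, $a\in D_g$, must be carried along so that the idempotent can be erased against it; your explicit flagging of this point, and of the degenerate cases $\s(g,g\m)=0$, matches the care taken (implicitly) in the paper.
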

  
  \begin{proof}
     Notice that
  \begin{equation}\label{PiGamma}
  \pi(c) \G(g\m) \G(g) = \s(g\m,g)  \pi (c),
  \end{equation} for all $g\in G$  and $c \in D_{g\m}.$ Indeed, the equality is trivial if  $c=0,$ so  assume $c\neq 0$. In particular, $1_{g\m} \neq 0.$ Then
  \begin{align*}
  &\pi(c) \G(g\m) \G(g) =  
  \pi(\0 _{g\m} (\0_{g}(c))) \G(g\m) \G(g) \\
  & =\s(g\m,g)\m \s(g,g\m)\m \G(g\m) \G(g) \pi (c)  
  \underbrace{\G(g\m) \G(g) \G(g\m)} \G(g)\\
  & =\s(g\m,g)\m \s(g,g\m)\m \s (g\m,g) \G(g\m) \G(g) \pi (c)  
  \G(g\m) \G(g) \\
  & =\s(g\m,g) \pi(\0 _{g\m} (\0_{g}(c)))  
   = \s(g\m,g)  \pi (c),
   \end{align*} as desired. Furthermore,
  \begin{equation}\label{PiGamma2}
  \pi(1_{g})  \G(g) = \G(g)  \pi (1_{g\m}), \text{ for all } g \in G \text{ with } 1_g \neq 0.
  \end{equation} 
   To see this it is enough to  multiply the equality 
   $\pi (1_g) = \G(g) \pi (1_{g\m}) \G(g\m) \s(g,g\m)\m$ by $\G (g)$ to obtain
   $$ \pi (1_g) \G (g) = \s(g,g\m)\m \G(g) \pi (1_{g\m}) \G(g\m)\G(g) \overset{(\ref{PiGamma})}{=}  \G(g) \pi (1_{g\m}) .$$ 
   
   In order to show that $(\pi \times \G)$ is multiplicative take    $g,h\in G,$   $a\in D_g, $ $b \in D_h.$ If  $1_g$ or $ 1_h$ equals $0$, then $a=0$ or $b=0$ and   
  $(\pi \times \G) (a \delta _g) (\pi \times \G) (b \delta _h)
  =0= (\pi \times \G) (a \delta _g \;  b \delta _h).$ Thus, we may assume  that $1_g \neq 0$ and $1_h \neq 0.$  Then
   \begin{align*}
  &  (\pi \times \G) (a \delta _g \;  b \delta _h)=
  (\pi \times \G) (\s(g,h) a  \0_g (1_{g\m} b) \delta _{gh})\\
  &=\s(g,h) \pi (a \0_g (1_{g\m} b)) \G({gh})=
  \s(g,h) \s(g,g\m)\m \pi (a)  \G(g) \pi( b 1_{g\m}) \underbrace{\G(g\m) \G (gh)}\\
  &=\s(g,h)  \s(g,g\m)\m \s(g,h)\m \pi (a)  \G(g) \pi(1_{g\m} b)
   \G(g\m) \G (g) \G(h)\overset{(\ref{PiGamma})}{=} \\
    &  \s(g,g\m)\m \s(g\m,g ) \pi (a)  \G(g) \pi(1_{g\m} b)
    \G(h)\overset{(\ref{1_g})}{=}\\
  &\pi (a) \G(g) \pi (1_{g\m}) \pi(b) \G(h) \overset{(\ref{PiGamma2})}{=}
   \pi (a 1_g) \G(g)  \pi(b) \G(h)=   
  (\pi \times \G) (a \delta _g) (\pi \times \G) (b \delta _h).
   \end{align*} 
  \end{proof}

 We are now ready to give a partial crossed product structure on 
 $\kpar ^\sigma G .$

  \begin{theorem}\label{prop:kparCrossed}  Let  $\s \in pm(G).$  Then there exists a unital partial action 
  $$ \0^\sigma = \{\0^\sigma_g :  D^\sigma_{g\m } \to  D^\sigma_{g } \}_{g\in G}$$ of $G$ on a commutative subalgebra  $B^\sigma$ of $\kpar ^\sigma G ,$ generated by idempotents $e_g^\sigma, (g\in G),$ with $D^\sigma_{g } = e_g^\sigma B^\sigma,$ for which $\s$ is a $\kappa $ -valued twist, such that the  map 
\[ 
  \Phi ^\sigma : \kpar ^\sigma G\to B^\sigma \ast _{(\0^\sigma , \s ) } G, \;\; \text{determined by}\;\; [g]^\sigma \mapsto e_g^\sigma \delta _g,\;\; (g\in G),
\] 
is an isomorphism of $\kappa $-algebras, whose inverse is given by
\[
  \Psi ^\sigma :  B^\sigma \ast _{(\0^\sigma , \s )} G \ni b \delta _g \mapsto b [g]^\sigma \in \kpar ^\sigma G.
\]
\end{theorem}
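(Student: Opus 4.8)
The plan is to obtain all the data of the theorem by applying the machinery of Section~\ref{sec:PPRvsTwistedPA} to the \emph{canonical} partial $\s$-representation $[\cdot]^\s : G \to \kpar^\s G$. By Corollary~\ref{cor:FactorSetFor[]}, $\s$ is a factor set for $[\cdot]^\s$, so Proposition~\ref{prop:ParAcFromParProjRep} applies verbatim with $\G = [\cdot]^\s$ and $R = \kpar^\s G$. This produces the idempotents
\[
  e_g^\s = [g]^\s [g\m]^\s \,\s(g\m,g)\m \quad(\text{and } e_g^\s = 0 \text{ when } \s(g\m,g)=0),
\]
the commutative unital subalgebra $B^\s \subseteq \kpar^\s G$ generated by them (with $e_1^\s = [1_G]^\s = 1$), the ideals $D^\s_g = e_g^\s B^\s$, and a unital partial action $\0^\s = \{\0^\s_g : D^\s_{g\m}\to D^\s_g\}_{g\in G}$ with $\0^\s_g(b) = [g]^\s b [g\m]^\s \s(g\m,g)\m$, for which $\s$ is a $\kappa$-valued twist. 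This already establishes the existence of the partial action claimed in the statement, so the crossed product $B^\s \ast_{(\0^\s,\s)} G$ is well defined.

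Next I would construct the two maps and show they are homomorphisms using the earlier universal tools. For $\Phi^\s$, note that $\0^\s$ on $B^\s$ has $1_g = e_g^\s$, so Proposition~\ref{prop:ImportantParSigmaRep} and Corollary~\ref{cor:hom} (applied with $A = B^\s$) give at once a $\kappa$-algebra homomorphism $\kpar^\s G \to B^\s \ast_{(\0^\s,\s)} G$ sending $[g]^\s \mapsto e_g^\s \de_g$; this is $\Phi^\s$. For $\Psi^\s$, I would exhibit a covariant $\s$-representation $(\pi,[\cdot]^\s)$ of $\0^\s$ in $\kpar^\s G$, taking $\pi : B^\s \hookrightarrow \kpar^\s G$ the (unital) inclusion. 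The covariance condition of Definition~\ref{def:Covariant} reads $[g]^\s\, a\, [g\m]^\s = \s(g,g\m)\,\0^\s_g(a)$ for $a\in D^\s_{g\m}$, which is immediate from the formula for $\0^\s_g$ together with the symmetry $\s(g,g\m)=\s(g\m,g)$ of \eqref{parproj10} (the degenerate case $\s(g\m,g)=0$ forcing $[g]^\s = 0$ by \eqref{parTwiD}, so both sides vanish). Proposition~\ref{prop:covariant} then yields the homomorphism $\pi\times[\cdot]^\s : B^\s \ast_{(\0^\s,\s)} G \to \kpar^\s G$, $b\de_g \mapsto b[g]^\s$, which is exactly $\Psi^\s$.

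It remains to check that $\Phi^\s$ and $\Psi^\s$ are mutually inverse, which suffices to verify on generators. On one side, $\Psi^\s\Phi^\s([g]^\s) = e_g^\s[g]^\s$, and I would compute $e_g^\s[g]^\s = [g]^\s$ using the relation $\G(g)e_h^\G = e_{gh}^\G\G(g)$ from \eqref{Commuting e} (with $h=g\m$, giving $[g]^\s e_{g\m}^\s = [g]^\s$), the formula for $e_{g\m}^\s$, and again \eqref{parproj10}; this gives $\Psi^\s\Phi^\s = \mathrm{id}$ since the $[g]^\s$ generate $\kpar^\s G$. On the other side, I would first show that $\Phi^\s$ restricts on $B^\s$ to the embedding $b \mapsto b\de_1$, by checking $\Phi^\s(e_h^\s) = e_h^\s\de_1$ directly from the crossed-product multiplication (using $\0^\s_h(1_{h\m}) = 1_h$ and $\varsigma_{h,h\m}=\s(h,h\m)e_h^\s$). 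Then $\Phi^\s\Psi^\s(b\de_g) = (b\de_1)(e_g^\s\de_g) = b\,e_g^\s\,\de_g$ (using $\varsigma_{1,g}=1_g=e_g^\s$ from item $(iv)$ of Definition~\ref{def:twparac}), and since $b\in D^\s_g = e_g^\s B^\s$ satisfies $b e_g^\s = b$, this equals $b\de_g$, giving $\Phi^\s\Psi^\s = \mathrm{id}$.

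The construction itself is essentially free, being a direct specialization of Proposition~\ref{prop:ParAcFromParProjRep} and Corollary~\ref{cor:hom}; the only genuine work is the covariance check for $\Psi^\s$ and the two generator computations. I expect the main obstacle to be bookkeeping the degenerate cases uniformly, namely ensuring every identity survives when $\s(g\m,g)=0$ (where $e_g^\s = [g]^\s = 0$), so that the formulas hold on all of $G$ rather than only where the twist is invertible; all such cases collapse to $0=0$ via \eqref{parTwiD} and \eqref{parTwiE}.
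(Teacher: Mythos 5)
Your proposal is correct and follows essentially the same route as the paper's proof: the partial action is obtained from Proposition~\ref{prop:ParAcFromParProjRep} applied to the canonical $\s$-representation, $\Phi^\s$ comes from the universal property (Corollary~\ref{cor:hom}), $\Psi^\s$ comes from the covariant $\s$-representation $(\pi,[\cdot]^\s)$ via Proposition~\ref{prop:covariant}, and the two inverse checks are done on generators. The only cosmetic difference is in the verification of $\Phi^\s \circ \Psi^\s = \mathrm{id}$: you first show that $\Phi^\s$ restricts on $B^\s$ to $b \mapsto b\de_1$ and then conclude in one line, whereas the paper computes directly on monomials $e_{h}^\s\cdots e_{t}^\s e_g^\s\de_g$; both are routine.
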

 
  \begin{proof}
     Taking $\G = [\cdot]^\sigma$ in \eqref{idempotent e} we obtain the idempotents $e_g^\sigma:=e_g^{\Gamma}$ and the commutative subalgebra  $B^\sigma :=  B^{\G}$ of $\kpar ^\sigma G.$ Then Proposition~\ref{prop:ParAcFromParProjRep} gives us the unital partial action   $\0^\sigma = \0^{\G}$ with  $ D^\sigma_{g } =  D^{\G}_{g } = e_g^\sigma B^{\G}.$  By Proposition~\ref{prop:ImportantParSigmaRep} the map 
  $G\to  B^\sigma \ast _{(\0^\sigma , \s )} G $ given by 
  $g\mapsto e_g\delta _g,$ $(g\in G),$ is a partial $\s$-representation, which, thanks to Proposition~\ref{prop:1to1corresp},  gives rise to the $\kappa $-algebra homomorphism 
  $\Phi ^\sigma,$ as announced in the Theorem.
  
  Observe now that it is an immediate consequence of the definition of $\0^\sigma$ and \eqref{parproj10} that the  identity map $ B^\sigma  \to B^\sigma$ and the canonical partial representation 
  $[\cdot]^\sigma$ form a covariant $\s$-representation of $\0^\sigma$ in 
  $\kpar ^\sigma G.$ Then by Proposition~\ref{prop:covariant} we obtain the  $\kappa $-algebra homomorphism  $\Psi^\sigma$  stated in our Theorem.
  
  We shall show that $\Psi^\sigma$ and  $ \Phi^\sigma$ are inverse to each other. On the one hand, for all $g\in G,$ we have that 
  \[
  \Psi^\sigma (\Phi^\sigma ([g]^\sigma)) = \Psi^\sigma (e_g^\sigma \delta _g)
  = e_g^\sigma [g]^\sigma = [g]^\sigma, 
  \]
  in view of \eqref{Commuting e}, so that $\Psi^\sigma \circ \Phi^\sigma = {\rm id}_{\kpar ^\sigma G}.$ On the other hand,  it is clearly enough to check the effect of $\Phi^\sigma \circ \Psi^\sigma$ on an element of the form $c=e_{h}^\sigma\ldots e_{t}^\sigma e_g ^\sigma \delta _g,$ $(h, \ldots t,   g \in G).$
  If $\s(x,x\m)=0 $ for some $x\in \{h, \ldots t,g\}$ then 
  $e_x=0$ so that $c=0$  and $\Phi^\sigma (\Psi^\sigma (c))=
  \Phi^\sigma (0)=0 = c.$ Assume that $\s(x,x\m)\neq 0 $ for each  $x\in \{h, \ldots t,g\}.$ Then
  \begin{align*}
&\Phi^\sigma (\Psi^\sigma (e_{h}^\sigma\ldots e_{t}^\sigma e_g^\sigma \delta _g)) 
    =\Phi^\sigma (e_{h}^\sigma \ldots e_{t}^\sigma e_g^\sigma [g]^\sigma)\\
   &=\s(h,h\m)\m \ldots \s(t,t\m)\m  \Phi^\sigma ([h]^\sigma[h\m]^\sigma \ldots [t]^\sigma[t\m]^\sigma [g]^\sigma)\\
   &= \s(h,h\m)\m \ldots \s(t,t\m)\m e_{h}^\sigma\delta_h \, e_{h\m}^\sigma\delta_{h\m} \, \ldots e_{t}^\sigma \delta_{t}
   \, e_{t\m}^\sigma \delta_{t\m}\,  e_g^\sigma \delta_g \\
   &=\s(h,h\m)\m \ldots \s(t,t\m)\m  \s(h,h\m) e_h^\sigma \delta_1 \ldots \s(t,t\m)e_t^\sigma \delta_1 e_g^\sigma \delta_g \\
   &= e_h^\sigma  \ldots e_t^\sigma e_g^\sigma \delta_g,
   \end{align*} 
   and thus $\Phi^\sigma \circ \Psi^\sigma = {\rm id}_{ B^\sigma \ast _{(\0^\sigma , \s )} G},$ completing our proof.
  \end{proof}

   \vspace*{10mm}
   
   \section{Remarks on bimodules related to projective partial representations}\label{sec:Remarks}

 In what follows  $A$ will denote a unital algebra over a field  $\kappa $   and $\Theta =(\0,  \varsigma )$ will be a unital twisted partial action of a group $G$ on $A$ such that $\0$ is a unital partial action of $G$ on $A$ and $\varsigma $ is a $\kappa $-based twist of $\0,$ i.e. $\varsigma \in Z^2(G, A, \kappa ).$ As above, $\s$ will be the $\kappa $-valued partial twist corresponding to $\varsigma .$ Note that $\s \in pm(G)$  by Corollary~\ref{cor:FactorSets=Twists}. All algebras will be unital with $1 \neq 0.$
 
 \medbreak
 
 We proceed with the following:

\begin{lemma}\label{lemma:ProjParRepBimodule}
 Let $\G  : G \to R$  be a partial $\s$-representation of  $G$ in a unital $\kappa $-algebra $R.$ If $X$ is a unital $R$-bimodule then  the map 
 $$
 \G ' : G \to {\rm End}_K(X)
 $$ 
 given by
 $$
 \G ' _g (x) : = \G (g) \cdot x \cdot \G (g\m ), \;\;\;\;\; 
 g\in G, x\in X,
 $$
 is a partial $ \s ' $-representation, where
 $$
 \s ' (g,h) = \s (g,h) \s(h\m, g\m),  \;\;\;\;\;  g,h\in G.
 $$ 
 \end{lemma}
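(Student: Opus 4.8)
The plan is to verify the four defining postulates \eqref{parproj1}--\eqref{parproj4} directly for $\G'$ with the proposed factor function $\s'$. Everything rests on one computational observation: since the left and right actions of $R$ on the bimodule $X$ commute, an iterated composite of the operators $\G'_g$ acts by simultaneous left and right multiplication,
\[
  \G'(g_1)\cdots\G'(g_n)(x)=\G(g_1)\cdots\G(g_n)\cdot x\cdot\G(g_n\m)\cdots\G(g_1\m),
\]
for all $x\in X$. Thus every product of the $\G'(g_i)$ splits into a \emph{left half}, built from the $\G(g_i)$ in the given order, and a \emph{right half}, built from the $\G(g_i\m)$ in reverse order. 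Postulate \eqref{parproj4} is then immediate, since $\G(1_G)=1_R$ forces $\G'(1_G)=\mathrm{id}_X$.

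For \eqref{parproj2} I would expand both sides via the displayed formula. On the left-hand side the left half is $\G(g\m)\G(g)\G(h)$, which \eqref{parproj2} for $\G$ turns into $\s(g,h)\G(g\m)\G(gh)$, while the right half is $\G(h\m)\G(g\m)\G(g)$, which \eqref{parproj3} for $\G$ applied to the pair $(h\m,g\m)$ turns into $\s(h\m,g\m)\G((gh)\m)\G(g)$. For \eqref{parproj3} the roles are swapped: the left half is handled by \eqref{parproj3} and the right half by \eqref{parproj2}. In both cases the scalar that emerges is $\s(g,h)\s(h\m,g\m)=\s'(g,h)$, and the surviving $\G$-products coincide with those obtained by expanding the corresponding right-hand side, so the two sides agree.

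For \eqref{parproj1}, assume $\s'(g,h)=0$, i.e. $\s(g,h)=0$ or $\s(h\m,g\m)=0$. Expanding $\G'(g\m)\G'(gh)$ and $\G'(gh)\G'(h\m)$ by the formula, each has a left half controlled by the pair $(g,h)$ and a right half controlled by $(h\m,g\m)$. If $\s(g,h)=0$, then \eqref{parproj1} for $\G$ kills the relevant left half; if $\s(h\m,g\m)=0$, then \eqref{parproj1} for $\G$ applied to $(h\m,g\m)$ kills the relevant right half. In either case the whole operator vanishes, as required.

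The main obstacle is purely the bookkeeping of inverses: one must confirm that the right half contributes exactly the factor $\s(h\m,g\m)$ (and not some other arrangement of inverses) and that the residual $\G$-products on the two sides of \eqref{parproj2} and \eqref{parproj3} match after rewriting. The conceptual reason behind the formula for $\s'$ clarifies this and gives an alternative structural proof: writing $\s'=\s\s^*$ with $\s^*(g,h)=\s(h\m,g\m)$, the operator $\G'_g$ is the action on $X$ of the element $\G(g)\otimes\G(g\m)$ of the enveloping algebra $R\otimes R^{op}$, whose first tensor component is the partial $\s$-representation $g\mapsto\G(g)$ into $R$ and whose second is the partial $\s^*$-representation $g\mapsto\G(g\m)$ into $R^{op}$ (the analogue for $\G$ of the construction in Lemma~\ref{l_factorinverse}); composing with the structural homomorphism $R\otimes R^{op}\to\operatorname{End}_K(X)$ then yields the claim.
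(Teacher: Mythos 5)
Your proposal is correct and follows essentially the same route as the paper: the paper likewise expands composites of the $\G'_g$ into a left product of the $\G(g_i)$ and a right product of the $\G(g_i\m)$, applies \eqref{parproj2} to the left half and \eqref{parproj3} (at the pair $(h\m,g\m)$) to the right half, and treats \eqref{parproj1} by the same case split on $\s(g,h)=0$ versus $\s(h\m,g\m)=0$. Your closing structural remark also recovers the one point the paper settles first, namely that $\s'=\s\s^*$ lies in $pm(G)$ because $\s^*\in pm(G)$ (Lemma~\ref{l_factorinverse}) and $pm(G)$ is closed under pointwise products.
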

 
 \begin{proof}
Since $\s \in pm(G),$ by Lemma \ref{l_factorinverse} we have that $\sigma^* \in pm(G)$. Hence, $\s' = \sigma \sigma^* \in pm(G),$ as $pm(G)$ is multiplicatively closed.
 
Take $g,h \in G$ and $x \in X.$ Then
 \begin{align*}
 & \G '_ {g\m} \G '_g \G '_h (x) =  
 (\G _ {g\m}   \G _g \G _h)  \cdot x 
 \cdot ( \G _ {h\m}  \G _{g\m}   \G _g) \\
 & =\s (g,h) \s(h\m, g\m) \, (\G _ {g\m}   \G _{gh})  \cdot x 
 \cdot ( \G _ {(gh)\m}   \G _g)  = 
 \s (g,h) \s(h\m, g\m) \, \G ' _ {g\m}   \G ' _{gh}  ( x), 
 \end{align*} and similarly,
 \[
   \G '_ {g} \G '_h \G '_{h\m} (x) =  \s (g,h) \s(h\m, g\m) \, 
   \G ' _ {gh}   \G ' _{h\m}  ( x),  
 \]
 showing \eqref{parproj2} and \eqref{parproj3} for $\G '.$  If 
 $\s '(g,h)=0,$ then $\s(g,h)=0$ or $\s (h\m, g\m)=0.$ In the case $\s(g,h)=0$ we have $\G _ {g\m}   \G _{gh} =0$ by \eqref{parproj1} and consequently 
 $\G ' _ {g\m}   \G ' _{gh} =0.$  Analogously $ \G ' _ {gh}   \G ' _{h\m} =0.$ If $\s (h\m, g\m)=0$ then $\G _ {(gh)\m}   \G _g=0$ and again
 \[
   \G ' _ {g\m}   \G ' _{gh} =0 =  \G ' _ {gh}   \G ' _{h\m},
 \]
 proving \eqref{parproj1} for $\G ' .$ Finally, the equality $\G ' (1)=1$ is evident.
 \end{proof}
    
\begin{remark} \label{r_left_right_sigmaprime_modules_are_equivalent}
    Observe that $(\sigma')^*=\sigma'$, therefore by Proposition \ref{p_left_and_right_partial_modules_equivalence} we have that the categories $\kappa_{par}^{\sigma'}G$\textbf{-Mod} and \textbf{Mod-}$\kappa_{par}^{\sigma'}G$ are isomorphic. Explicitly, if $X$ is a left $\kappa_{par}^{\sigma'}G$-module, then it is a right $\kappa_{par}^{\sigma'}G$-module, with the action given by
    \[
        x \cdot [g]^{\sigma'}:= [g^{-1}]^{\sigma'} \cdot x, \, \forall x \in X \text{ and } g\in G.  
    \]
 \end{remark}

 For the rest of the paper $M$ will denote an $A \ast _{\0, \s }G$-bimodule, and $\sigma' $  will stand for the element of $pm(G)$ defined in Lemma~\ref{lemma:ProjParRepBimodule}. Recall that the generators of $\kpar G ,$ $\kpar ^{\s }G $ and $\kpar ^{\sigma' }G $ are denoted by $ [g],  [g]^{\sigma}$ and $ [g]^{\sigma'},$ respectively, where $g\in G.$ We adopt a similar notation for the idempotents  $e_g \in \kpar G ,$  $e_g^{\sigma} \in \kpar ^{\s }G $ and $e_g^{\sigma'} \in \kpar ^{\sigma' }G $ and the corresponding subalgebras $B,$ $B^{\sigma}$ and $B^{\sigma'},$ where the algebra $B$ is generated by the idempotents $e_g =[g][g\m], g\in G,$ and where  $e_g^{\sigma}$ and  $e_g^{\sigma'}$ are defined by formula  \eqref{idempotent e} used for the corresponding canonical partial representations. \medbreak

 By Proposition~\ref{prop:ImportantParSigmaRep}, the map 
 $\G_\sigma : G \to A \ast _{\0,\varsigma } G,$  
 $g \mapsto 1_g \delta _g,$ $(g\in G),$  is a partial  $\s $-representation. Then Lemma~\ref{lemma:ProjParRepBimodule} and the universal property in
 Proposition~\ref{prop:1to1corresp} gives a $\kappa$-algebra homomorphism $\kpar ^{\sigma' }G \to {\rm End}_K(M),$ which endows $M$ with the structure of a left   
  $\kpar ^{\sigma' }G $-module  determined by the following formula:
 \begin{equation} \label{e_KparG-modM}
 [g]^{\sigma'}\cdot m = ( 1_g \delta _g) \,  m  \, (1_{g\m} \delta _{g\m}),
 \;\;\;  g\in G, m\in M.
 \end{equation}

\begin{remark}\label{rem:BrightKparMod} 
Keeping in mind the canonical partial $\s$-representation, the proof of Lemma~\ref{lemma:ProjParRepBimodule} can be easily adapted to endow $B^\sigma \subseteq \kpar ^\sigma G $ with the structure of a left  $\kpar ^{\sigma' }G$-module determined by
    \begin{equation}\label{eq:BleftKparMod}
        [g]^{\sigma'} \cdot b = [g]^{\sigma}b[g^{-1}]^\sigma, \;\;\; g\in G, b \in B^\sigma.
        \end{equation} 
Thus, by Remark \ref{r_left_right_sigmaprime_modules_are_equivalent} the right $\kappa_{par}^{\sigma'}G$-module structure of $B$ is given by
    \begin{equation}\label{eq:BrightKparMod}
    b \cdot [g]^{\sigma'} = [g\m]^{\sigma}b[g]^\sigma, \;\;\; g\in G, b \in B^\sigma.
    \end{equation} 
Clearly, \eqref{eq:BleftKparMod} and \eqref{eq:BrightKparMod} endows $B$ with the structure of a left and right $\kpar G$-module, respectively, if we have $\s (g,h)=1$ for all $g,h\in G.$ 

\end{remark}

    \begin{lemma}\label{l_good_factor_set}
        Let $\rho \in pm(G)$, if $\rho(g,g^{-1}) \in \{0,1\}$ for all $g \in G$. Then,
        \begin{align}
            \rho(g,h) &= \rho(h^{-1},g^{-1})^{-1}  \label{e_equivalent_factor_set1} \\ 
            \rho(g,h) &= \rho(h, h^{-1}g^{-1}) \label{e_equivalent_factor_set2} 
        \end{align}
        for all $(g,h)$ such that $\rho(g,h)\neq 0$.
    \end{lemma}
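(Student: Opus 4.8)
The plan is to realize $\rho$ concretely as the $\kappa$-valued twist of a unital partial action and then extract both identities from the partial $2$-cocycle relation \eqref{par2-cocycle} by feeding it suitable triples. By Corollary~\ref{cor:FactorSets=Twists}, $\rho$ is the $\kappa$-valued twist of some unital partial action $\0=\{\0_g\colon D_{g\m}\to D_g\}_{g\in G}$ of $G$ on an algebra $A$, so I may work with the central idempotents $1_g$ generating the $D_g$ and use \eqref{domain}, \eqref{1_g} and, crucially, \eqref{par2-cocycle}. First I would record two preliminary facts. Since $\rho(g,g\m)=0\Leftrightarrow 1_g=0$ by \eqref{domain} while $\rho(g,g\m)\in\{0,1\}$ by hypothesis, we get $\rho(g,g\m)=1$ whenever $1_g\neq 0$, and then $\rho(g,1_G)=\rho(1_G,g)=1$ by \eqref{1_g}. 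Moreover, if $\rho(g,h)\neq 0$ then $1_g1_{gh}\neq 0$, so $1_g$ and $1_{gh}$ are nonzero, and applying $\0_{g\m}$ together with \eqref{eq:unities} gives $1_{g\m}1_h=\0_{g\m}(1_g1_{gh})\neq 0$, whence $1_h\neq 0$ as well.

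For \eqref{e_equivalent_factor_set2} I would apply \eqref{par2-cocycle} to the triple $(g,h,(gh)\m)$. Since $gh(gh)\m=1_G$, the required nonvanishing $1_g1_{gh}1_{gh(gh)\m}=1_g1_{gh}$ holds precisely because $\rho(g,h)\neq 0$. After simplifying $h(gh)\m=g\m$, the cocycle identity reads $\rho(g,h)\,\rho(gh,(gh)\m)=\rho(g,g\m)\,\rho(h,(gh)\m)$. Using the preliminary facts $\rho(gh,(gh)\m)=1$ (as $1_{gh}\neq 0$) and $\rho(g,g\m)=1$ (as $1_g\neq 0$), this collapses to $\rho(g,h)=\rho(h,(gh)\m)=\rho(h,h\m g\m)$, which is exactly \eqref{e_equivalent_factor_set2}.

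For \eqref{e_equivalent_factor_set1} I would first apply \eqref{par2-cocycle} to the triple $(g,h,h\m)$. Here $ghh\m=g$, so the nonvanishing condition is again $1_g1_{gh}\neq 0$, and the identity becomes $\rho(g,h)\,\rho(gh,h\m)=\rho(g,1_G)\,\rho(h,h\m)=1$, using $1_g,1_h\neq 0$. Hence $\rho(gh,h\m)=\rho(g,h)\m$, and in particular $\rho(gh,h\m)\neq 0$, so \eqref{e_equivalent_factor_set2} may be applied to the pair $(gh,h\m)$; it yields $\rho(gh,h\m)=\rho(h\m,h(gh)\m)=\rho(h\m,g\m)$. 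Combining the two displays gives $\rho(h\m,g\m)=\rho(g,h)\m$, i.e. \eqref{e_equivalent_factor_set1}.

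The only delicate point is the bookkeeping of which idempotents $1_x$ are nonzero, since \eqref{par2-cocycle} is conditional on a triple product $1_g1_{gh}1_{ght}$ not vanishing: each invocation must be justified by reducing that product to $1_g1_{gh}$ (the extra factor being forced to equal $1_g$ or $1_{1_G}$) and then citing $\rho(g,h)\neq 0$. I expect this to be the main obstacle, but it is routine once the preliminary facts above are established, and no genuinely new argument beyond the cocycle identity is required.
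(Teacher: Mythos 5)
Your proof is correct, but it reaches the two identities by a different route than the paper, and in the opposite order. The paper first proves \eqref{e_equivalent_factor_set1} by a computation with the generators of the twisted partial group algebra: writing $e_g^\rho=[g]^\rho[g^{-1}]^\rho$ (possible since $\rho(g,g^{-1})\in\{0,1\}$) and pushing $e_{gh}^\rho$ and $e_{h^{-1}}^\rho$ through the product using the defining relations, one gets $e_g^\rho e_{gh}^\rho=\rho(g,h)\rho(h^{-1},g^{-1})\,e_g^\rho e_{gh}^\rho$, and the nonvanishing of $e_g^\rho e_{gh}^\rho$ (equivalent to $\rho(g,h)\neq 0$) forces $\rho(g,h)\rho(h^{-1},g^{-1})=1$; only then does it invoke the cocycle relation on the triple $(g,h,h^{-1})$, in the form $e_g^\rho e_{gh}^\rho e_{ghh^{-1}}^\rho\neq 0\Rightarrow\rho(g,h)\rho(gh,h^{-1})=\rho(g,1)\rho(h,h^{-1})=1$, and combines it with \eqref{e_equivalent_factor_set1} to get \eqref{e_equivalent_factor_set2}. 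You instead derive everything from the partial $2$-cocycle identity \eqref{par2-cocycle}: the triple $(g,h,(gh)^{-1})$ gives \eqref{e_equivalent_factor_set2} directly, and the triple $(g,h,h^{-1})$ together with \eqref{e_equivalent_factor_set2} gives \eqref{e_equivalent_factor_set1}. Your realization of $\rho$ as a twist via Corollary~\ref{cor:FactorSets=Twists} is essentially the same device the paper uses implicitly (the canonical partial action $\theta^\rho$ on $B^\rho\subseteq\kappa_{par}^{\rho}G$ coming from Proposition~\ref{prop:ParAcFromParProjRep} and Corollary~\ref{cor:FactorSetFor[]}), so the two proofs rest on the same foundations; the trade-off is that your argument is more uniform — one tool, two triples, no manipulation of the generators $[g]^\rho$ — while the paper's computation for \eqref{e_equivalent_factor_set1} stays entirely inside $\kappa_{par}^{\rho}G$ and exhibits concretely why $\rho(g,h)\rho(h^{-1},g^{-1})$ must act as $1$ on $e_g^\rho e_{gh}^\rho$. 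Your bookkeeping of nonvanishing idempotents (in particular deducing $1_h\neq 0$ from $\rho(g,h)\neq 0$ via $\theta_{g^{-1}}$ and \eqref{eq:unities}) is exactly the point that needs care, and you handle it correctly; it replaces the paper's implicit use of $e_h^\rho\neq 0$ in the generator computation.
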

    \begin{proof}
    Let $(g,h)$ such that $\rho(g,h)\neq 0$, then
    \[
            \rho(g,h) \neq 0 \iff e_g^\rho e_{gh}^\rho \neq 0.
    \] 
    Thus,
    \begin{align*}
        e_g^\rho e_{gh}^\rho 
        &= [g]^\rho [g^{-1}]^\rho e_{gh}^\rho
        = [g]^\rho e_{h}^\rho [g^{-1}]^\rho \\
        &= [g]^\rho[h]^\rho[h^{-1}]^\rho[g^{-1}]^\rho 
        = [g]^\rho[h]^\rho e_{h^{-1}}^\rho [h^{-1}]^\rho[g^{-1}]^\rho \\
        &= \rho(g,h)\rho(h^{-1},g^{-1})[gh]^\rho e_{h^{-1}}^\rho [h^{-1}g^{-1}]^\rho \\
        &= \rho(g,h)\rho(h^{-1},g^{-1})e_{g}^\rho e_{gh}^\rho.
    \end{align*}
    Thus, we obtain \eqref{e_equivalent_factor_set1}. Since $e_g^\rho e_{gh}^\rho e_{gh h^{-1}}^\rho \neq 0$, then 
    \[
    \rho(g,h)\rho(gh,h^{-1})= \rho(g,1)\rho(h,h^{-1})=1.
    \]
    Hence, 
    \[
        \rho(g,h) = \rho(gh,h^{-1})^{-1} \overset{\eqref{e_equivalent_factor_set1}}{=} \rho(h,h^{-1}g^{-1}).
    \]
    \end{proof}

    Observe that if the equation $\sigma(g,g) = x^2$ has a solution in $\kappa$ for all $g \in G_2:= \{g \in G : g^2 = 1\}$, we have that $\sigma$ is equivalent to a partial factor set $\nu$ of $G$ such that $\nu(g,g^{-1})=1$, for all $g \in G$. That is, there exist a map $\eta: G \to \kappa \setminus \{0\}$ such that
    \[
        \rho(g,h)\eta(gh)= \nu(g,h) \eta(g)\eta(h).
    \]
    Indeed, since $g \neq g^{-1}$ for all $g \in G_2^c$ we can split $G_2^c$ in two disjoints sets $C$ and $C'$ such that
    \begin{enumerate}[(i)]
        \item $C_2^c = C \sqcup C'$; 
        \item if $g \in C$, then $g^{-1}\notin C$ and $g^{-1} \in C'$;
        \item if $g \in C'$, then $g^{-1}\notin C'$ and $g^{-1} \in C$.
    \end{enumerate}
    Thus, $G = G_2 \sqcup C \sqcup C'$. We put
    \[
    \eta(g) = \left\{\begin{matrix}
        \sigma(g,g^{-1})^{-1} & \text{ if } g \in C \text{ and } \sigma(g,g^{-1}) \neq 0 \\ 
        \sigma(g,g)^{-1/2} & \text{ if } g \in G_2 \text{ and } \sigma(g,g) \neq 0\\ 
        1 & \text{ otherwise.}
        \end{matrix}\right.  
    \]
    Hence, we define
    \begin{equation}
        \nu(g,h)= \frac{\eta(g) \eta(h)}{\eta(gh)} \sigma(g,h).
    \end{equation}
    Therefore, $\nu$ is a factor set equivalent to $\sigma$, in particular, observe that for all $g \in G$ such that $\sigma(g,g^{-1}) \neq 0$, if $g \in G_2$ then
    \begin{equation}
        \nu(g,g)=  \frac{\eta(g) \eta(g)}{\eta(1)} \sigma(g,g)= \sigma(g,g)^{-1/2}\sigma(g,g)^{-1/2}\sigma(g,g)=1
    \end{equation}
    and if $g \notin G_2$ then
    \begin{equation}
        \nu(g,g^{-1})=  \frac{\eta(g) \eta(g^{-1})}{\eta(1)} \sigma(g,g^{-1})= \sigma(g,g^{-1})^{-1}\sigma(g,g^{-1})=1.
    \end{equation}

    From the above discussion, we obtain the following proposition.

    \begin{proposition} \label{p_factor_set_equivalence}
        Let $\rho$ be a factor set. Then,
        \begin{enumerate}[(i)]
            \item $\rho$ is equivalent to a factor set $\nu$ such that $\nu(g,g^{-1}) = 1$ for all $g \in G \setminus G_2$;
            \item if the equation $\rho(g,g) = x^2$ has a solution in $\kappa$ for all $g \in G_2$, then $\rho$ is equivalent to a factor set that satisfies the equations \eqref{e_equivalent_factor_set1} and \eqref{e_equivalent_factor_set2}.
        \end{enumerate}
    \end{proposition}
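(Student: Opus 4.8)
The plan is to follow the explicit coboundary construction carried out just before the statement and to reorganize it around the two claims, adding at the end the passage to Lemma~\ref{l_good_factor_set} that yields part (ii). The common device is a nowhere-vanishing function $\eta : G \to \kappa \setminus \{0\}$, from which I set $\nu(g,h) = \eta(g)\eta(h)\eta(gh)\m \rho(g,h)$. Because $\eta$ takes only nonzero values, $\nu$ and $\rho$ have exactly the same zero locus, and the factor $\eta(g)\eta(h)\eta(gh)\m$ is a coboundary, so a short substitution shows it preserves the partial $2$-cocycle identity; hence $\nu \in pm(G)$ and $\nu$ is equivalent to $\rho$. This reduces both claims to a suitable choice of $\eta$ together with a pointwise evaluation at the pairs $(g,g\m)$.

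For part (i) I would partition $G \setminus G_2$ into two subsets $C$ and $C'$ interchanged by inversion (so $g \in C \Leftrightarrow g\m \in C'$), and put $\eta(g) = \rho(g,g\m)\m$ for $g \in C$ with $\rho(g,g\m) \neq 0$, and $\eta(g) = 1$ at every other point (in particular on all of $G_2$ and $C'$); no square roots are needed. For $g \in C$ this gives $\nu(g,g\m) = \eta(g)\eta(g\m)\eta(1)\m \rho(g,g\m) = \rho(g,g\m)\m \rho(g,g\m) = 1$ whenever $\rho(g,g\m) \neq 0$, while for $g \in C'$ the same computation produces $\nu(g,g\m) = \rho(g\m,g)\m \rho(g,g\m)$, which equals $1$ after invoking the symmetry $\rho(g,g\m) = \rho(g\m,g)$, i.e. \eqref{parproj10} (valid because $\rho$ is the factor set of its canonical partial representation by Corollary~\ref{cor:FactorSetFor[]}). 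Thus $\nu(g,g\m) = 1$ for every $g \in G \setminus G_2$ at which $\rho(g,g\m) \neq 0$, which is the content of (i).

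For part (ii), under the hypothesis that $\rho(g,g) = x^2$ is solvable in $\kappa$ for each $g \in G_2$, I would enlarge $\eta$ by also setting $\eta(g) = \rho(g,g)^{-1/2}$ (a chosen square root) for $g \in G_2$ with $\rho(g,g) \neq 0$, keeping the previous values on $C \cup C'$. Since $\rho(1,1) = 1$ forces $\eta(1) = 1$, one checks $\nu(g,g) = \eta(g)^2 \eta(1)\m \rho(g,g) = \rho(g,g)\m \rho(g,g) = 1$ for $g \in G_2$ with $\rho(g,g) \neq 0$, whereas part (i) covers $G \setminus G_2$. Consequently $\nu(g,g\m) \in \{0,1\}$ for all $g \in G$, so $\nu$ meets the hypothesis of Lemma~\ref{l_good_factor_set}; applying that lemma to $\nu$ delivers \eqref{e_equivalent_factor_set1} and \eqref{e_equivalent_factor_set2}, which proves (ii).

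The pointwise arithmetic is routine; the main obstacle is the bookkeeping across the three-way partition $G = G_2 \sqcup C \sqcup C'$ together with the zero locus, and specifically the case $g \in C'$, which is precisely where the symmetry \eqref{parproj10} must be used to identify $\eta(g\m)$ with $\rho(g,g\m)\m$. One also has to verify once and for all that multiplying $\rho$ by a nowhere-zero coboundary keeps the result inside $pm(G)$ (same support and same cocycle identity), so that $\nu$ is a legitimate partial factor set equivalent to $\rho$. The square-root hypothesis intervenes only to normalize the involutions of $G_2$, which explains why it is needed for (ii) but not for (i).
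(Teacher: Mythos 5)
Your proposal is correct and follows essentially the same route as the paper: the same coboundary device $\nu(g,h)=\eta(g)\eta(h)\eta(gh)\m\rho(g,h)$, the same partition $G = G_2 \sqcup C \sqcup C'$ with $\eta$ supported nontrivially on $C$ (and, for (ii), on $G_2$ via the square roots), and the same final appeal to Lemma~\ref{l_good_factor_set}. The only differences are that you spell out two points the paper leaves implicit — that a nowhere-zero coboundary preserves membership in $pm(G)$, and that the case $g \in C'$ rests on the symmetry \eqref{parproj10} — which is a welcome clarification rather than a deviation.
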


    \begin{proof}
        For $(i)$ observe that the map
        \[
            \eta(g) = \left\{\begin{matrix}
                \sigma(g,g^{-1})^{-1} & \text{ if } g \in C \text{ and } \sigma(g,g^{-1}) \neq 0 \\ 
                1 & \text{ otherwise}
                \end{matrix}\right.  
        \]
        give us the desired equivalence. For $(ii)$, in view of the above considerations, Lemma \ref{l_good_factor_set} implies that $\nu$ satisfies the desired equations.
    \end{proof}

    If $\rho$ and $\nu$ are equivalent factor sets such that $ \eta(gh)\nu(g,h)= \eta(g)\eta(h) \rho(g,h)$, it is easily verified that if $\rho$ is a twist of a unital partial action $\theta$, then $\nu$ is also a twist of $\theta ,$ and,  furthermore,
    \begin{align*}
        A *_{\theta,\nu} G &\to A *_{\theta, \rho}G, \\ 
        a_g \delta_g^{\nu} &\mapsto \eta(g)a_g \delta_g^{\rho}
    \end{align*}
    is an isomorphism of $\kappa$-algebras.

\section{Homological Spectral Sequence}\label{sec:HomolSpecSeq}

Thanks to Proposition \ref{p_factor_set_equivalence} we can assume without loss of generality that our fixed partial factor set $\sigma$ satisfies $\sigma(g,g^{-1}) \in \{0,1\}$ for all $g \in G \setminus G_2$. Thus, $\sigma'(g,g^{-1})=1$ for all $g \in G \setminus G_2$. Furthermore, $\sigma'$ is equivalent to an idempotent partial factor set $\sigma''$. Indeed, define $\xi : G \to \kappa \setminus\{0\}$ such that 
\begin{equation}
    \xi(g)  =   \left\{\begin{matrix}
        \sigma(g,g)^{-1}  & \text{ if }\sigma(g,g)\neq 0 \text{ and }  g^2=1 \\ 
         1  & \text{otherwise.}
       \end{matrix}\right.    
\end{equation}
Notice that the immediate consequence of the definition of $\xi$ is 
\begin{equation} \label{e_xiInverseInvarinace}
    \xi(g)=\xi(g^{-1}).
\end{equation}

Now we define an equivalent partial factor set of $\sigma'$:
\begin{equation}
    \sigma''(g,h)= \frac{\xi(g)\xi(h)}{\xi(gh)} \sigma'(g,h).
\end{equation}
Observe that if $g \in G\setminus G_2$ then $\sigma''(g,g^{-1}) \in \{0,1\}$. On the other hand, if $g \in G_2$ and $\sigma(g,g) \neq 0$, then
\[
  \sigma''(g,g)=\frac{\xi(g)\xi(g)}{\xi(g^2)} \sigma'(g,g)= \sigma(g,g)^{-2}\sigma(g,g)^2=1.
\]
Therefore, $\sigma''(g,g^{-1}) \in \{0,1\}$ for all $g \in G$. Thus, by Lemma \ref{l_good_factor_set} we have that $\sigma''(g,h)= \sigma''(h^{-1},g^{-1})^{-1}$. Furthermore, we have that $(\sigma'')^*=\sigma''$. Indeed, recall that $(\sigma')^*= \sigma'$, thus
\begin{align*}
    \sigma''(h^{-1},g^{-1})&= \frac{\xi(h^{-1}) \xi(g^{-1})}{\xi(h^{-1}g^{-1}) }\sigma'(h^{-1},g^{-1}) \\ 
    &= \frac{\xi(h^{-1}) \xi(g^{-1})}{\xi(h^{-1}g^{-1}) }\sigma'(g,h) \\ 
    &= \frac{\xi(h^{-1}) \xi(g^{-1})}{\xi(h^{-1}g^{-1}) } \frac{\xi(gh)}{\xi(g) \xi(h)} \sigma''(g,h) \\ 
    & \overset{\eqref{e_xiInverseInvarinace}}{=} \sigma''(g,h).
\end{align*}
Then, $\sigma''(g,h)=\sigma''(h^{-1},g^{-1})$ for all $g,h \in G$. Whence we obtain
\[
    \sigma''(g,h)=\sigma''(h^{-1},g^{-1}) = \sigma''(g,h)^{-1}, \text{ for all } g,h \in G \text{ such that }\sigma(g,h)\neq 0.  
\]
Therefore, $\sigma''(g,h) \in \{0,1\}$ for all $g,h \in G$ and $\sigma'$ is equivalent\footnote{Notice that if $G$ has no elements of order $2$ then $\sigma' = \sigma''$, and in this case $\xi(g)=1$ for all $g\in G$.} to $\sigma''$.

Since $\sigma'$ and $\sigma''$ are equivalent via $\xi$ then $\kappa_{par}^{\sigma''}G \to \kappa_{par}^{\sigma'}G$ such that
\begin{equation}\label{e_sigmaprimeisomorphism}
    [g]^{\sigma''} \mapsto \xi(g) [g]^{\sigma'}
\end{equation}
is an isomorphism of algebras. Thus, the categories $\kappa_{par}^{\sigma''}G$\textbf{-Mod} and $\kappa_{par}^{\sigma'}G$\textbf{-Mod} are isomorphic. Furthermore, observe that
\begin{equation} \label{e_xiformula}
    \xi(g)= \sigma(g,g^{-1})^{-1}=\sigma(g^{-1},g)^{-1}=\xi(g^{-1}), \text{ for all } g \in G \text{ such that }\sigma(g,g^{-1})\neq 0.
\end{equation}
In addition, for the basic generators of $B^\sigma$, we obtain
\begin{equation} \label{e_xisigma}
    e_g^{\sigma} = \xi(g)[g]^{\sigma}[g^{-1}]^{\sigma} \text{ for all } g\in G.
\end{equation}

Recall that since $\sigma''$ is idempotent then there exists a surjective morphism of algebra
\begin{equation}\label{eq:kparGepi}
\kappa_{par}G \to \kappa_{par}^{\sigma''}G,
\end{equation}
such that $[g] \mapsto [g]^{\sigma''}.$ Thus, $M$ is a $\kappa_{par}G$-module with the structure given by the formulas \eqref{e_KparG-modM} and \eqref{e_sigmaprimeisomorphism}, explicitly we have
\begin{equation}\label{KparG-mod M} 
    [g] \cdot m := [g]^{\sigma''} \cdot m = \xi(g)[g]^{\sigma'} \cdot m = \xi(g)(1_g \delta_g) \cdot m \cdot (1_{g^{-1}} \delta_{g^{-1}}), 
\end{equation}
for all $g \in G$ and $m \in M.$ Notice that we can consider  $\kappa_{par}^{\sigma''}G$\textbf{-Mod} as a full subcategory of $\kappa_{par}G$\textbf{-Mod} in view of the surjective morphism \eqref{eq:kparGepi}.

\medskip 

\textbf{Notation.} Since we are fixing the $\kappa$-algebra $A$, the group $G$, the partial action $\theta$ and the twist $\sigma$, from now on we will denote the crossed product $A *_{\theta, \sigma}G$ just by $\Lambda$.

\medskip 

Let $M$ be a $\Lambda$-bimodule. Then, $M$ is an $A$-bimodule with the actions induced by the identification of $A$ with $A \delta_1 \subseteq \Lambda$, i.e., the action is defined by 

\begin{equation} \label{e_AbiModSt}
    a \cdot m := a\delta_1 \cdot m \text{ and } m \cdot a := m \cdot a\delta_1.
\end{equation}

\begin{proposition} \label{p_AskewGmIsKparGm}
    Let $f:X \to Y$ be a map of $\Lambda$-bimodules. Then, $f$ is a morphism of $\kappa^{\sigma'}_{par}G$-modules, consequently $f$ is also a morphism of $\kappa_{par}^{\sigma''}G$-modules.
\end{proposition}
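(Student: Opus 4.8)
The plan is to observe that, by formula \eqref{e_KparG-modM}, the left $\kappa_{par}^{\sigma'}G$-module structure on \emph{any} $\Lambda$-bimodule is defined purely in terms of the two-sided $\Lambda$-action, namely $[g]^{\sigma'}\cdot m = (1_g\delta_g)\,m\,(1_{g^{-1}}\delta_{g^{-1}})$. Since the elements $[g]^{\sigma'}$ ($g\in G$) generate $\kappa_{par}^{\sigma'}G$ as a $\kappa$-algebra, and since a map of $\Lambda$-bimodules is automatically $\kappa$-linear (it commutes with left multiplication by $\kappa\cdot 1_\Lambda\subseteq\Lambda$), it suffices to check that $f$ commutes with the action of each generator $[g]^{\sigma'}$; compatibility with arbitrary elements of $\kappa_{par}^{\sigma'}G$ then follows by $\kappa$-linearity and multiplicativity.

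For a fixed $g\in G$ and $x\in X$ I would simply compute
\[
f([g]^{\sigma'}\cdot x)= f\big((1_g\delta_g)\,x\,(1_{g^{-1}}\delta_{g^{-1}})\big) = (1_g\delta_g)\,f(x)\,(1_{g^{-1}}\delta_{g^{-1}}) = [g]^{\sigma'}\cdot f(x),
\]
where the middle equality is exactly the hypothesis that $f$ respects the left and right $\Lambda$-actions separately. Hence $f$ is a morphism of left $\kappa_{par}^{\sigma'}G$-modules.

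For the final assertion, recall from \eqref{e_sigmaprimeisomorphism} that the isomorphism $\kappa_{par}^{\sigma''}G\to\kappa_{par}^{\sigma'}G$, $[g]^{\sigma''}\mapsto\xi(g)[g]^{\sigma'}$, transports the $\kappa_{par}^{\sigma'}G$-structures on $X$ and $Y$ to their $\kappa_{par}^{\sigma''}G$-structures, so that $[g]^{\sigma''}\cdot x=\xi(g)\,[g]^{\sigma'}\cdot x$. Using the $\kappa$-linearity of $f$ together with the computation above gives $f([g]^{\sigma''}\cdot x)=\xi(g)\,f([g]^{\sigma'}\cdot x)=\xi(g)\,[g]^{\sigma'}\cdot f(x)=[g]^{\sigma''}\cdot f(x)$, so $f$ is also a $\kappa_{par}^{\sigma''}G$-module map. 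I do not expect a genuine obstacle here: the only point requiring care is recognizing that the $\kappa_{par}^{\sigma'}G$-action is expressed entirely through the ambient two-sided $\Lambda$-multiplication, after which the statement is an immediate consequence of $f$ being a bimodule homomorphism.
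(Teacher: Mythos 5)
Your proof is correct and follows essentially the same route as the paper: the core step in both is the single computation $f([g]^{\sigma'}\cdot x) = f\bigl((1_g\delta_g)\, x\,(1_{g^{-1}}\delta_{g^{-1}})\bigr) = (1_g\delta_g)\, f(x)\,(1_{g^{-1}}\delta_{g^{-1}}) = [g]^{\sigma'}\cdot f(x)$, relying on \eqref{e_KparG-modM} and the fact that $f$ respects both $\Lambda$-actions. Your additional remarks (that checking generators suffices by $\kappa$-linearity and multiplicativity, and the explicit transport of the $\kappa_{par}^{\sigma''}G$-structure via \eqref{e_sigmaprimeisomorphism}) merely make explicit what the paper leaves implicit.
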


\begin{proof}
    By direct computation we obtain
    \[
        f([g]^{\sigma'}\cdot x) = f((1_g \delta_g)\cdot x \cdot (1_{g^{-1}} \delta_{g^{-1}})) = (1_g \delta_g)\cdot f(x) \cdot (1_{g^{-1}} \delta_{g^{-1}}) = [g]^{\sigma'} \cdot f(x),
    \]
    for all $x \in X$ and $g \in G$.
\end{proof}

Recall that $A$ is $\kappa_{par}G$-module with the action defined by
\begin{equation} \label{e_AisKparGmodule}
    [g] \cdot a := \theta_g(1_{g^{-1}}a)
\end{equation} (see, for example,  \cite[Example 3.5]{ALVES2015137}). 

\begin{proposition} \label{p_AeMAction} 
    $A \otimes_{A^e}M$ is a $\kappa_{par}^{\sigma''}G$-module with action 
    \begin{equation} \label{e_AeMKpGM} 
        [g]^{\sigma''}\cdot(a \otimes_{A^e}m):=[g] \cdot a \otimes_{A^e} [g] \cdot m.  
    \end{equation}
\end{proposition}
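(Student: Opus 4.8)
The plan is to exhibit the assignment $g \mapsto \Gamma_g$, with $\Gamma_g(a \otimes_{A^e} m) = [g]\cdot a \otimes_{A^e} [g]\cdot m$, as a partial projective $\s''$-representation of $G$ on $N := A \otimes_{A^e} M$, and then invoke the universal property of $\kpar^{\sigma''}G$ (Proposition~\ref{prop:1to1corresp}). Throughout I write $\af_g(a) = [g]\cdot a = \0_g(1_{g\m}a)$ for the $\kpar G$-action on $A$ of \eqref{e_AisKparGmodule}, and $\bt_g(m) = [g]^{\sigma''}\cdot m$ for the action on $M$ of \eqref{KparG-mod M}, so that $\Gamma_g = \af_g \otimes \bt_g$ on elementary tensors. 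Because $A$ is a $\kpar G$-module the family $\af$ is a partial representation, and because $M$ is a $\kpar^{\sigma''}G$-module the family $\bt$ is a partial $\s''$-representation; these facts will be used factorwise at the end.

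The first and most computational step is to establish, for all $a,b \in A$ and $m \in M$, the three intertwining identities
\[
  \af_g(ab) = \af_g(a)\,\af_g(b), \qquad \bt_g(a\cdot m) = \af_g(a)\cdot\bt_g(m), \qquad \bt_g(m\cdot a) = \bt_g(m)\cdot\af_g(a),
\]
where the $A$-bimodule structure on $M$ is that of \eqref{e_AbiModSt}. The first is immediate from $\0_g$ being an algebra isomorphism on $D_{g\m}$ together with the centrality and idempotency of $1_{g\m}$. For the other two I would compute inside $\Lambda$ using the crossed product multiplication and $\varsigma_{g,1}=\varsigma_{1,g}^{\phantom{.}}$, the crux being the factorizations
\[
  (1_g\de_g)(b\de_1) = \af_g(b)\,\de_g = (\af_g(b)\de_1)(1_g\de_g), \qquad (b\de_1)(1_{g\m}\de_{g\m}) = (1_{g\m}\de_{g\m})(\af_g(b)\de_1),
\]
valid for all $b\in A$. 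Substituting the first into $\bt_g(a\cdot m) = \xi(g)(1_g\de_g)(a\de_1)\,m\,(1_{g\m}\de_{g\m})$ peels off $\af_g(a)\de_1$ on the left and leaves $\bt_g(m)$; substituting the second into $\bt_g(m\cdot a) = \xi(g)(1_g\de_g)\,m\,(a\de_1)(1_{g\m}\de_{g\m})$ peels off $\af_g(a)\de_1$ on the right. With these in hand, well-definedness of $\Gamma_g$ on $N$ reduces to checking $A^e$-balancing on the generators $b\otimes 1$ and $1\otimes b^{\mathrm{op}}$: for instance $\Gamma_g(ab\otimes_{A^e}m)=\af_g(a)\af_g(b)\otimes_{A^e}\bt_g(m)=\af_g(a)\otimes_{A^e}\af_g(b)\cdot\bt_g(m)=\Gamma_g(a\otimes_{A^e}b\cdot m)$, using the defining relations of $\otimes_{A^e}$ and the second identity, and symmetrically for $1\otimes b^{\mathrm{op}}$ via the third.

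Finally I would verify the partial $\s''$-representation axioms \eqref{parproj1}--\eqref{parproj4} for $\Gamma$ factorwise on elementary tensors. A preliminary observation is that the zero-loci match: by \eqref{domain} and \eqref{eq:unities} one has $1_g1_{gh}=0 \Leftrightarrow 1_{g\m}1_h=0 \Leftrightarrow 1_h1_{g\m}=0$, so $\s(g,h)=0 \Leftrightarrow \s(h\m,g\m)=0$, whence $\s''(g,h)=0 \Leftrightarrow \s(g,h)=0 \Leftrightarrow \af_g\af_h=0$. Then \eqref{parproj4} is clear since $\af_1={\rm id}$ and $\bt_1={\rm id}$; for \eqref{parproj2} we get $\Gamma_{g\m}\Gamma_g\Gamma_h(a\otimes_{A^e}m)=\af_{g\m}\af_{gh}(a)\otimes_{A^e}\s''(g,h)\bt_{g\m}\bt_{gh}(m)=\s''(g,h)\,\Gamma_{g\m}\Gamma_{gh}(a\otimes_{A^e}m)$, using the partial-representation relation for $\af$ and the partial $\s''$-representation relation for $\bt$, and \eqref{parproj3} follows symmetrically; for \eqref{parproj1}, $\s''(g,h)=0$ forces $\af_{g\m}\af_{gh}=0$ and $\bt_{g\m}\bt_{gh}=0$, hence $\Gamma_{g\m}\Gamma_{gh}=0$, and likewise $\Gamma_{gh}\Gamma_{h\m}=0$. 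Proposition~\ref{prop:1to1corresp} then furnishes the asserted $\kpar^{\sigma''}G$-module structure \eqref{e_AeMKpGM}. The main obstacle is the second paragraph: pinning down the intertwining identities requires careful bookkeeping with the twisted multiplication and the central idempotents $1_g$, although no new idea is needed beyond the algebra-homomorphism property of $\0_g$ and centrality of the $1_g$; once they are secured, both well-definedness and the representation axioms follow routinely.
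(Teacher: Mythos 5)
Your proposal is correct and follows essentially the same route as the paper's proof: the paper likewise establishes well-definedness by verifying $A^e$-balancedness of $(a,m)\mapsto [g]\cdot a\otimes_{A^e}[g]\cdot m$ through the same crossed-product manipulations (your intertwining identities $\theta$-multiplicativity, $\beta_g(a\cdot m)=\alpha_g(a)\cdot\beta_g(m)$ and $\beta_g(m\cdot a)=\beta_g(m)\cdot\alpha_g(a)$ appear there merged into one inline computation), then checks the partial $\sigma''$-representation relations factorwise using that $A$ is a $\kappa_{par}G$-module and $M$ a $\kappa_{par}^{\sigma''}G$-module, and concludes via the universal property of $\kappa_{par}^{\sigma''}G$. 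The only difference is organizational: you isolate the intertwining identities as reusable steps and explicitly verify the axioms \eqref{parproj1} and \eqref{parproj4} together with the matching of zero loci, which the paper treats as routine and leaves implicit.
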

\begin{proof}
    First, we have to check that the map
\begin{align*}
    \Gamma_g^{\sigma''} : A \otimes_{A^e} M   &\to A \otimes_{A^e} M \\ 
        a \otimes_{A^e}m &\mapsto [g] \cdot a \otimes_{A^{e}} [g]\cdot m
\end{align*}
    is well-defined. Indeed, for all $b,c \in A$ we get
    \begin{align*}
        [g] \cdot ( a \cdot (b \otimes c)) &\otimes_{A^e} [g]\cdot m \\ 
        & = [g] \cdot (c a b) \otimes_{A^e} [g]^{\sigma''} \cdot m \\ 
        & = ([g] \cdot c )([g] \cdot a) ([g] \cdot b) \otimes_{A^e} (\xi(g)[g]^{\sigma'}) \cdot m \\ 
        & = ([g] \cdot a) \otimes_{A^e} ([g] \cdot b) \cdot(\xi(g)[g]^{\sigma'} \cdot m) \cdot ([g] \cdot c ) \\
        & = ([g] \cdot a) \otimes_{A^e} (\theta_g(1_{g^{-1}}b)\delta_1) \cdot(\xi(g)1_g \delta_g \cdot m \cdot 1_{g^{-1}} \delta_{g^{-1}}) \cdot (\theta_g(1_{g^{-1}}c)\delta_1) \\ 
        & = ([g] \cdot a) \otimes_{A^e} \xi(g)(\theta_g(1_{g^{-1}}b)1_g \delta_g) \cdot m \cdot (( 1_{g^{-1}} \delta_{g^{-1}})  (\theta_g(1_{g^{-1}}c)\delta_1) \\
        & = ([g] \cdot a) \otimes_{A^e} \xi(g)(1_g \delta_g)(b \delta_1) \cdot m \cdot (c \delta_1)( 1_{g^{-1}} \delta_{g^{-1}}) ) \\
        &\overset{\eqref{KparG-mod M}}{=}([g] \cdot a) \otimes_{A^e} [g] \cdot \left( (b \otimes c) \cdot m \right).
    \end{align*}
    We can define the map $\Gamma^{\sigma''}: G \to \operatorname{End}_\kappa(A \otimes_{A^e} M)$, such that $ g \mapsto \Gamma_g^{\sigma''}$. The map $\Gamma^{\sigma''}$ is a partial representation. Indeed, on one hand, we have
    \begin{align*}
        \Gamma_{g^{-1}}^{\sigma''} \Gamma_{g}^{\sigma''} \Gamma_{h}^{\sigma''} (a \otimes_{A^e}m)
        &= \left([g^{-1}][g][h] \cdot a \otimes_{A^e} [g^{-1}][g][h] \cdot m \right) \\ 
        &= \left([g^{-1}][g][h] \cdot a \otimes_{A^e} [g^{-1}]^{\sigma''}[g]^{\sigma''}[h]^{\sigma''} \cdot m \right) \\ 
        &= \left([g^{-1}][gh] \cdot a \otimes_{A^e} \sigma''(g,h)[g^{-1}]^{\sigma''}[gh]^{\sigma''} \cdot m \right) \\
        &= \left([g^{-1}][gh] \cdot a \otimes_{A^e} \sigma''(g,h)[g^{-1}][gh] \cdot m \right) \\
        &= \sigma''(g,h)\Gamma_{g^{-1}}^{\sigma''}\Gamma_{gh}^{\sigma''}(a \otimes_{A^e}m),
    \end{align*}
    and, on the other hand,
    \begin{align*}
        \Gamma_{g}^{\sigma''} \Gamma_{h}^{\sigma''} \Gamma_{h^{-1}}^{\sigma''}(a \otimes_{A^e}m)
        &= \left([g][h][h^{-1}] \cdot a \otimes_{A^e} [g]^{\sigma''}[h]^{\sigma''}[h^{-1}]^{\sigma''} \cdot m \right) \\ 
        &= \left([gh][h^{-1}] \cdot a \otimes_{A^e} \sigma''(g,h)[gh]^{\sigma''}[h^{-1}]^{\sigma''} \cdot m \right) \\ 
        &= \sigma''(g,h)\Gamma_{gh}^{\sigma''}\Gamma_{h^{-1}}^{\sigma''} (a \otimes_{A^e}m).
    \end{align*}
\end{proof}

In view of Proposition~\ref{p_AeMAction}, observe that for any map of $\Lambda$-bimodules $f:X \to Y$, by Proposition~\ref{p_AskewGmIsKparGm} $f$ is map of $\kappa_{par}^{\sigma''}G$-modules and, consequently,  
$1_A \otimes_{A^e} f : A \otimes_{A^e}X \to A \otimes_{A^e} Y$ is a map of $\kappa_{par}^{\sigma''}G$-modules. Hence,  we can define the covariant (right exact) functor
\[
  F_1(-):= A \otimes_{A^e}-: \Lambda^e\textbf{-Mod} \to \kappa_{par}^{\sigma''}G\textbf{-Mod}
\]
and by Remark \ref{rem:BrightKparMod} and the isomorphism \eqref{e_sigmaprimeisomorphism} the right exact functor
\[
    F_2(-):= B^{\sigma} \otimes_{\kappa_{par}^{\sigma''}G}-: \kappa_{par}^{\sigma''}G \textbf{-Mod} \to \kappa\textbf{-Mod}. 
\]
Recall that the Hochschild homology of $\Lambda$ with coefficients in $M$ is the left-derived functor of 
\[
    F(-):=\Lambda \otimes_{\Lambda^e}-:\Lambda^e\textbf{-Mod} \to \kappa\textbf{-Mod}.
\]

Notice that the left derived functor $L_\bullet F_1$ may compute the Hochschild homology of $A$ with coefficients in $M$, the only obstruction is that the source category of $F_1$ is $\Lambda^e\textbf{-Mod}$ instead of $A^e\textbf{-Mod}$. We are going to see that $\Lambda$ is projective as $A$-bimodule, and hence $L_\bullet F_1$ computes the Hochschild homology. To achieve that we need the following proposition

\begin{proposition} \label{l_LambdaIsAeProjective}
    $\Lambda$ is projective as left and right $A$-module.
\end{proposition}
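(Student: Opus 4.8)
The plan is to exploit the natural $G$-grading $\Lambda=\bigoplus_{g\in G}D_g\de_g$ of the crossed product and to show that, with respect to the left (respectively right) $A$-action coming from the embedding $A=A\de_1\subseteq\Lambda$ as in \eqref{e_AbiModSt}, each homogeneous component $D_g\de_g$ is a projective $A$-module; since an arbitrary direct sum of projective modules is projective, this gives the claim. The first observation is that $A=A\de_1$ acts without disturbing the grading, that is $(a\de_1)(b\de_g)\in D_g\de_g$ and $(b\de_g)(a\de_1)\in D_g\de_g$, so that $\Lambda=\bigoplus_g D_g\de_g$ is simultaneously a decomposition of left and of right $A$-modules, and it suffices to treat one summand at a time.

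For the left module structure I would compute directly, for $a\in A$ and $b\de_g\in D_g\de_g$, that $a\cdot(b\de_g)=(a\de_1)(b\de_g)=ab\de_g$, using $\0_1=\mathrm{id}$, $1_1=1$ and $\varsigma_{1,g}=1_g$ from Definition~\ref{def:twparac}. Hence $b\de_g\mapsto b$ is an isomorphism of left $A$-modules $D_g\de_g\cong D_g=1_gA$. Because $1_g$ is a central idempotent, $A=1_gA\oplus(1-1_g)A$ as left $A$-modules, so $D_g$ is a direct summand of the free module $A$ and is therefore projective.

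The right module structure is the only place requiring care, since there the $A$-action is twisted by $\0_g$: computing $(b\de_g)\cdot a=(b\de_g)(a\de_1)=b\,\0_g(1_{g\m}a)\de_g$ (again using $\varsigma_{g,1}=1_g$) shows that $D_g\de_g$ is not simply the right ideal $D_g$. The key step is to identify it instead with $D_{g\m}$: I claim that the map $D_{g\m}\to D_g\de_g$, $c\mapsto\0_g(c)\de_g$, is an isomorphism of right $A$-modules, where $D_{g\m}$ carries ordinary right multiplication. It is a $\kappa$-linear bijection because $\0_g$ is bijective, and it is right $A$-linear because, for $c\in D_{g\m}$ (so that $c=c1_{g\m}$) and $a\in A$, the fact that $\0_g$ is an algebra homomorphism gives $\0_g(c)\,\0_g(1_{g\m}a)=\0_g(c\,1_{g\m}a)=\0_g(ca)$, which is exactly the image of $c\cdot a=ca$. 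As before, $D_{g\m}=1_{g\m}A$ is a direct summand of the free right module $A$ and hence projective, so $D_g\de_g$ is a projective right $A$-module.

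Putting the pieces together, $\Lambda=\bigoplus_g D_g\de_g$ is a direct sum of projective left $A$-modules and also a direct sum of projective right $A$-modules, whence $\Lambda$ is projective on both sides. The only genuinely non-routine point is the right-hand identification via $\0_g$; everything else reduces to the centrality of the idempotents $1_g$ and to the normalization $\varsigma_{1,g}=\varsigma_{g,1}=1_g$.
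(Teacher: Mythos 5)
Your proof is correct and follows essentially the same route as the paper's: decompose $\Lambda=\bigoplus_{g\in G}D_g\delta_g$, note $D_g\delta_g\cong D_g$ as left $A$-modules and $D_g\delta_g\cong D_{g\m}$ as right $A$-modules via $c\mapsto\0_g(c)\delta_g$, and conclude projectivity from the summand decomposition $A=D_g\oplus(1-1_g)A$. Your explicit attention to the normalization $\varsigma_{g,1}=\varsigma_{1,g}=1_g$ is a point the paper leaves implicit, but the argument is the same.
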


\begin{proof}
   Since
   \[
    \Lambda = \bigoplus_{g \in G} D_g \delta_g,
   \]
   is a direct sum of $A$-bimodules. Then, it is enough to show that each $D_g\delta_g$ is projective as left and right $A$-module. Observe that $A$ can be decomposed in the following direct sum of left (right) $A$-modules.
   \[
        A = D_g \oplus (1-1_g)A.
   \]
   Thus, each $D_g$ is projective as a left (right) $A$-module. Therefore, $\Lambda$ is projective as left $A$-module, since $D_g \delta_g \cong D_g$ as left $A$-modules. On the other hand, notice that $D_{g^{-1}} \cong D_g\delta_g$ as right  $A$-modules via the map $\hat{\theta}_g: D_{g^{-1}} \ni a_{g^{-1}} \mapsto \theta_g(a_{g^{-1}})\delta_g \in D_g \delta_g$. Indeed,
   \[
    \hat{\theta}_g(a_{g^{-1}} b)= \theta_g(a_{g^{-1}} b)\delta_g = \theta_g(a_{g^{-1}})\theta_g(1_{g^{-1}}b)\delta_g = (\theta_g(a_{g^{-1}})\delta_g)(b\delta_1)= \hat{\theta}_g(a_{g^{-1}})\cdot b.
   \]
\end{proof}

Using a straightforward dual basis argument one easily obtains the next:

\begin{lemma} \label{l_RSprojectiveIsPSeProjective}
    Let $R$ and $S$ be unital $K$-algebras. Suppose that $X$ is a projective left $R$-module and $Y$ a projective right $S$-module. Then, $X \otimes_K Y$ is a projective left $R \otimes_K S^{op}$-module.
\end{lemma}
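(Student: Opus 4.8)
The plan is to invoke the dual basis characterization of projective modules and to assemble an explicit dual basis for $X \otimes_K Y$ out of the given ones for $X$ and $Y$. Since $X$ is projective as a left $R$-module, there is a dual basis $\{(x_i, f_i)\}_{i \in I}$ with $x_i \in X$ and $f_i \in \operatorname{Hom}_R(X, R)$ such that for each $x \in X$ one has $f_i(x) = 0$ for all but finitely many $i$ and $x = \sum_{i} f_i(x)\, x_i$. Likewise, projectivity of $Y$ as a right $S$-module yields $\{(y_j, g_j)\}_{j \in J}$ with $y_j \in Y$, $g_j \in \operatorname{Hom}_S(Y, S)$, almost all $g_j(y)$ vanishing, and $y = \sum_j y_j\, g_j(y)$.

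First I would recall that the left $R \otimes_K S^{op}$-module structure on $X \otimes_K Y$ is given by $(r \otimes s)\cdot(x \otimes y) = rx \otimes ys$, using that a right $S$-module is the same thing as a left $S^{op}$-module. I then propose the candidate dual basis consisting of the elements $x_i \otimes y_j$ together with the $K$-linear maps $h_{ij} : X \otimes_K Y \to R \otimes_K S^{op}$ determined by $h_{ij}(x \otimes y) = f_i(x) \otimes g_j(y)$, indexed by $(i,j) \in I \times J$.

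The key steps are three. First, check that each $h_{ij}$ is a homomorphism of left $R \otimes_K S^{op}$-modules: using $f_i(rx) = r\, f_i(x)$ and $g_j(ys) = g_j(y)\,s$, one computes $h_{ij}((r \otimes s)(x \otimes y)) = r f_i(x) \otimes g_j(y)\,s$, which matches $(r \otimes s)\cdot h_{ij}(x \otimes y)$ once one unwinds that the product in $R \otimes_K S^{op}$ reverses the order of the second factors. Second, verify the dual basis identity: for a simple tensor $x \otimes y$ one has $\sum_{i,j} h_{ij}(x \otimes y)\cdot(x_i \otimes y_j) = \bigl(\sum_i f_i(x)\, x_i\bigr) \otimes \bigl(\sum_j y_j\, g_j(y)\bigr) = x \otimes y$, and this extends $K$-linearly to all of $X \otimes_K Y$. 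Third, observe the finiteness condition, since on any finite sum of simple tensors only finitely many $f_i$ and $g_j$ contribute, so only finitely many $h_{ij}$ are nonzero on a given element. These three facts are precisely the dual basis lemma for $X \otimes_K Y$, whence it is projective as a left $R \otimes_K S^{op}$-module.

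The only real obstacle is the bookkeeping with the opposite algebra: one must consistently use the identification of right $S$-modules with left $S^{op}$-modules, so that the left action of $b \in S^{op}$ on $Y$ is the right multiplication $y \mapsto yb$, and the product in $R \otimes_K S^{op}$ multiplies the second factors in the reversed order. Once these conventions are fixed, both the $R \otimes_K S^{op}$-linearity of $h_{ij}$ and the dual basis identity reduce immediately to the corresponding identities for $X$ over $R$ and for $Y$ over $S$, which is why the argument is as straightforward as the statement claims.
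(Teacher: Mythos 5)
Your proof is correct, and it is exactly the argument the paper has in mind: the paper dispenses with this lemma by saying it follows from ``a straightforward dual basis argument,'' and your write-up supplies precisely that argument, with the dual bases $\{(x_i,f_i)\}$, $\{(y_j,g_j)\}$ combined into $\{(x_i \otimes y_j,\, h_{ij})\}$ and the opposite-algebra bookkeeping handled correctly.
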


Hence, by a direct application of Lemma \ref{l_LambdaIsAeProjective} and Lemma \ref{l_RSprojectiveIsPSeProjective} we obtain the following: 

\begin{lemma} \label{l_AskewP}
    $\Lambda^e$ is a projective $A^e$-module.
\end{lemma}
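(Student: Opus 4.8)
The plan is to realize $\Lambda^e = \Lambda \otimes_\kappa \Lambda^{op}$ as an external tensor product of a projective left $A$-module with a projective right $A$-module, so that Lemma~\ref{l_RSprojectiveIsPSeProjective} applies directly. Recall that the $A^e$-module structure on $\Lambda^e$ is the one obtained by restricting the left regular $\Lambda^e$-module along the algebra homomorphism $A^e = A \otimes_\kappa A^{op} \to \Lambda \otimes_\kappa \Lambda^{op} = \Lambda^e$ induced by the unital inclusion $A \hookrightarrow \Lambda$, $a \mapsto a\delta_1$. Writing an element of $\Lambda^e$ as $\lambda \otimes \mu$ with $\lambda \in \Lambda$ and $\mu \in \Lambda^{op}$, and recalling that the multiplication in $\Lambda^{op}$ is opposite to that of $\Lambda$, one computes
\[
 (a \otimes a')\cdot(\lambda \otimes \mu) = (a\delta_1)\lambda \otimes \mu(a'\delta_1), \qquad a \otimes a' \in A^e.
\]

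First I would read off the two tensor factors separately. The first factor is $\Lambda$ equipped with the left $A$-action given by left multiplication by $a\delta_1$; by Lemma~\ref{l_LambdaIsAeProjective} this is a projective left $A$-module. The second factor is $\Lambda$, viewed inside $\Lambda^{op}$, equipped with the action $\mu \mapsto \mu(a'\delta_1)$, that is, the right $A$-module structure of $\Lambda$ given by right multiplication; again by Lemma~\ref{l_LambdaIsAeProjective} this is a projective right $A$-module. Thus, as a left $A^e$-module, $\Lambda^e$ is isomorphic to $X \otimes_\kappa Y$ with $X = {}_A\Lambda$ and $Y = \Lambda_A$.

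Then I would invoke Lemma~\ref{l_RSprojectiveIsPSeProjective} with $K = \kappa$, $R = S = A$, taking $X = {}_A\Lambda$ projective on the left and $Y = \Lambda_A$ projective on the right: it yields that $X \otimes_\kappa Y$ is a projective left $R \otimes_\kappa S^{op}$-module, where $R \otimes_\kappa S^{op} = A \otimes_\kappa A^{op} = A^e$. It only remains to check that the left $A^e$-action supplied by Lemma~\ref{l_RSprojectiveIsPSeProjective}, namely $(r \otimes s^{op})(x \otimes y) = rx \otimes ys$, coincides with the action displayed above, and this is immediate from the two formulas. The only point requiring care — and the main, if modest, obstacle — is precisely this bookkeeping of conventions: verifying that passing to $\Lambda^{op}$ turns the left multiplication of $A^{op}$ into the right multiplication of $A$, so that the second factor is genuinely a \emph{right} $A$-module of the form required by Lemma~\ref{l_RSprojectiveIsPSeProjective}. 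Once this identification is in place, the projectivity of $\Lambda^e$ over $A^e$ follows with no further computation.
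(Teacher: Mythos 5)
Your proof is correct and is precisely the paper's argument: the paper also obtains the lemma by combining Lemma~\ref{l_LambdaIsAeProjective} (projectivity of $\Lambda$ as a left and right $A$-module) with Lemma~\ref{l_RSprojectiveIsPSeProjective} applied to $\Lambda^e = \Lambda \otimes_\kappa \Lambda^{op}$ over $A^e = A \otimes_\kappa A^{op}$, the $A^e$-structure coming from $a \otimes b \mapsto a\delta_1 \otimes b\delta_1$. Your explicit bookkeeping of the $\Lambda^{op}$ conventions only spells out what the paper leaves implicit.
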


Notice that the $A^e$-module structure of $\Lambda^e$ is given by the map of $\kappa$-algebras
\[
    A^e \ni a \otimes b \mapsto a\delta_1 \otimes b\delta_1 \in \Lambda^e.
\]
Thus, the following proposition will be useful

\begin{proposition}{\cite[Propostion 1.4]{DeMeyerSeparableAlgebras}} \label{p_DMSA}
    Let $R$ and $S$ be rings and $f: R \to S$ a homomorphism of rings such that $S$ is a projective $R$-module. Then, any projective $S$-module is a projective $R$-module.
\end{proposition}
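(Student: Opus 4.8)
The plan is to use the standard characterization of projective modules as direct summands of free modules, together with the observation that restriction of scalars along $f$ carries the relevant $S$-module decompositions over to $R$-module decompositions. Recall that $f : R \to S$ makes every left $S$-module into a left $R$-module via $r \cdot m = f(r) \cdot m$, and that every $S$-linear map between $S$-modules is automatically $R$-linear for these induced structures.

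First I would take a projective left $S$-module $P$ and choose an $S$-module $Q$ and an index set $I$ with $P \oplus Q \cong S^{(I)}$ as left $S$-modules, where $S^{(I)}$ denotes the free $S$-module on $I$. Restricting scalars along $f$, this very same isomorphism becomes an isomorphism of left $R$-modules, so $P \oplus Q \cong S^{(I)}$ in $R$-modules as well. The point to keep straight here is purely formal: the underlying abelian groups and maps do not change, only the acting ring is replaced by $R$ through $f$.

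Next I would identify, as an $R$-module, the free module $S^{(I)}$ with the direct sum of $I$ copies of the $R$-module $S$. By hypothesis $S$ is projective as an $R$-module, and an arbitrary (possibly infinite) direct sum of projective modules is again projective; hence $S^{(I)}$ is a projective $R$-module. Finally, $P$ is a direct summand of $S^{(I)}$ as an $R$-module (with complement $Q$), and a direct summand of a projective module is projective, so $P$ is projective over $R$, as claimed.

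There is essentially no genuine obstacle in this argument; it is a routine descent along restriction of scalars. The only things requiring any care are formal: tracking the two module structures correctly under $f$, and invoking the standard facts that arbitrary direct sums of projectives are projective and that direct summands of projectives are projective.
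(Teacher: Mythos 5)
Your proof is correct. The paper itself gives no argument for this statement --- it is quoted directly from DeMeyer--Ingraham (the cited Proposition 1.4) --- and your argument is the standard one: write a projective $S$-module $P$ as a summand of a free module $S^{(I)}$, restrict scalars along $f$ (which changes nothing about the underlying groups and maps), observe that $S^{(I)}$ is an $R$-direct sum of copies of the projective $R$-module $S$ and hence $R$-projective, and conclude via closure of projectivity under direct sums and direct summands. This is exactly how the cited source proves it, so there is nothing to add.
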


From Proposition \ref{p_DMSA} and Lemma \ref{l_AskewP} we conclude that any projective $\Lambda$-bimodule is a projective $A$-bimodule. Therefore, any projective resolution of $M$ in $\operatorname{Rep}\Lambda^e$ is a projective resolution of $M$ in $\operatorname{Rep}A^e$. Thus, the left derived functor of $F_1$ computes the Hochschild homology of $A$ with coefficients in $M,$ with $M$ being seen as an $A$-bimodule, i.e.
    \begin{equation} 
            H_\bullet(A,M) \cong L_\bullet F_1(M).
    \end{equation}

\begin{lemma} \label{l_teneq}
        Let $X$ be a $\Lambda$-bimodule and $Y$ a left  $\kappa_{par}^{\sigma''}G$-module. Then
        \begin{enumerate}[(i)]
            \item  $e_g \cdot a = 1_g a$, for all $g \in G$ and $a \in A$, considering $A$ with the $\kappa_{par}G$-module structure determined by \eqref{e_AisKparGmodule};
            \item  $ e_g \cdot x = e^{\sigma''}_g \cdot x = 1_g \cdot x \cdot 1_g,$ $g \in G$ and $x \in X$, considering $X$ with the $\kappa_{par}^{\sigma'}G$-module structure given by \eqref{e_KparG-modM} and with the consequent $\kappa_{par}G$-module structure as in \eqref{KparG-mod M};
            \item  $e_g^{\sigma} \otimes_{\kappa_{par}^{\sigma''}G} y = 1_{B^{\sigma}} \otimes_{\kappa_{par}^{\sigma''}G} [g^{-1}]^{\sigma''} \cdot y = 1_{B^\sigma} \otimes_{\kappa_{par}^{\sigma''}G} e^{\sigma''}_g \cdot y =1_{B^\sigma} \otimes_{\kappa_{par}^{\sigma''}G} e_g \cdot y$, as elements of $B^\sigma \otimes_{\kappa_{par}^{\sigma''}G}Y$, for all $g \in G$ and $y \in Y,$
						where the left $\kappa_{par}G$-action on  $Y$ is given via \eqref{eq:kparGepi};
            \item $e_g \cdot (a \otimes_{A^e} x) = a \otimes_{A^e} e_g \cdot x = e_g \cdot a \otimes_{A^e}  x$, as elements of $A \otimes_{A^e}X$; so that $w \cdot (a \otimes_{A^e} x) = w \cdot a \otimes_{A^e} x = a \otimes_{A^e} w \cdot x$, for all $w \in B$, where the left $\kappa_{par}G$-actions on the involved $\kappa_{par}^{\sigma''}G$-modules are given by \eqref{eq:kparGepi}. 
        \end{enumerate}
\end{lemma}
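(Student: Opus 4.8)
Each of the four items is a direct computation, so the plan is to unwind the relevant module-structure formulas and then invoke either the partial-action axioms or the balancing relations of the tensor products. The two structural facts I will lean on throughout are that the epimorphism \eqref{eq:kparGepi} sends $e_g=[g][g\m]$ to $e_g^{\sigma''}=[g]^{\sigma''}[g\m]^{\sigma''}$ (the scalar from \eqref{idempotent e} being $1$ because $\sigma''$ is idempotent), and the identity \eqref{e_xisigma} combined with $\xi(g)=\sigma(g,g\m)\m=\xi(g\m)$ coming from \eqref{e_xiformula} and \eqref{e_xiInverseInvarinace}; these are exactly what make the stray scalars cancel.

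For (i) I would simply compute $e_g\cdot a=[g]\cdot([g\m]\cdot a)$ using \eqref{e_AisKparGmodule}, obtaining $\theta_g(1_{g\m}\theta_{g\m}(1_g a))=\theta_g\theta_{g\m}(1_g a)=1_g a$, since $\theta_{g\m}(1_g a)\in D_{g\m}$ and $\theta_g\theta_{g\m}$ is the identity on $D_g$. For (ii), the first equality $e_g\cdot x=e_g^{\sigma''}\cdot x$ is immediate from the displayed image of $e_g$ under \eqref{eq:kparGepi}. For the second equality I expand $e_g^{\sigma''}\cdot x=[g]^{\sigma''}\cdot([g\m]^{\sigma''}\cdot x)$ via \eqref{KparG-mod M} and reduce $(1_g\delta_g)(1_{g\m}\delta_{g\m})$ to $\sigma(g,g\m)1_g\delta_1$ using the crossed-product multiplication together with $\theta_g(1_{g\m})=1_g$; the accumulated scalar $\xi(g)\xi(g\m)\sigma(g,g\m)^2$ collapses to $1$ by \eqref{e_xiformula}, leaving $e_g^{\sigma''}\cdot x=(1_g\delta_1)\cdot x\cdot(1_g\delta_1)=1_g\cdot x\cdot 1_g$ in the notation of \eqref{e_AbiModSt}. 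When $1_g=0$ all three expressions vanish.

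For (iii) the point is to move the $\kappa_{par}^{\sigma''}G$-factors across the tensor and evaluate the right action of $1_{B^\sigma}$. Using the right-module formula $b\cdot[h]^{\sigma''}=\xi(h)[h\m]^{\sigma}b[h]^{\sigma}$ (Remark~\ref{rem:BrightKparMod} pulled back along \eqref{e_sigmaprimeisomorphism}) together with \eqref{e_xisigma}, the commuting relations \eqref{Commuting e}, and $\xi(g)=\xi(g\m)$, I would establish the two identities $1_{B^\sigma}\cdot[g\m]^{\sigma''}=e_g^{\sigma}$ and $1_{B^\sigma}\cdot e_g^{\sigma''}=e_g^{\sigma}$ in $B^\sigma$ (for the second one, absorbing $[g]^{\sigma}e_{g\m}^{\sigma}=[g]^{\sigma}$ via \eqref{Commuting e}). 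Balancing then yields $1_{B^\sigma}\otimes[g\m]^{\sigma''}\cdot y=(1_{B^\sigma}\cdot[g\m]^{\sigma''})\otimes y=e_g^{\sigma}\otimes y$ and likewise $1_{B^\sigma}\otimes e_g^{\sigma''}\cdot y=e_g^{\sigma}\otimes y$, giving the first three equalities; the last, $e_g^{\sigma''}\cdot y=e_g\cdot y$, holds already in $Y$ because the $\kappa_{par}G$-action is defined through \eqref{eq:kparGepi}.

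Finally, for (iv) I would first apply Proposition~\ref{p_AeMAction} to get $e_g\cdot(a\otimes_{A^e}x)=([g][g\m]\cdot a)\otimes_{A^e}([g][g\m]\cdot x)=e_g\cdot a\otimes_{A^e}e_g\cdot x=1_g a\otimes_{A^e}1_g x 1_g$, using (i) and (ii). Since each $1_g$ is a central idempotent of $A$, the defining relation $cab\otimes x=a\otimes bxc$ of the balanced tensor over $A^e$ (with $b=c=1_g$) gives $1_g a\otimes x=a\otimes 1_g x 1_g=1_g a\otimes 1_g x 1_g$, so all three expressions in (iv) coincide; the statement for arbitrary $w\in B$ then follows because $B$ is generated by the commuting idempotents $e_g$ and the actions are $\kappa$-linear and multiplicative. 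The main obstacle throughout is purely bookkeeping: verifying in (ii) and (iii) that the scalars $\xi$ and the factor-set values $\sigma(g,g\m)$ cancel exactly to produce honest idempotents, and selecting the correct right-module convention in (iii).
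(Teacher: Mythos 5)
Your proof is correct and follows essentially the same route as the paper's: item (i) is the identical computation with $\theta_g\theta_{g^{-1}}$, item (ii) is the same scalar bookkeeping collapsing $\xi(g)\xi(g^{-1})\sigma(g,g^{-1})^2$ to $1$ via \eqref{e_xiformula}, item (iii) uses the same right-action identities $1_{B^\sigma}\cdot[g^{-1}]^{\sigma''}=1_{B^\sigma}\cdot e_g^{\sigma''}=e_g^{\sigma}$ (the paper just carries them out inside the tensor product rather than first in $B^\sigma$), and item (iv) combines Proposition~\ref{p_AeMAction} with (i), (ii) and the $A^e$-balancing relation exactly as the paper does. The only cosmetic difference is that you organize the degenerate cases ($1_g=0$, $\sigma(g,g^{-1})=0$) and the extension from the generators $e_g$ to all of $B$ slightly more explicitly, which the paper leaves implicit.
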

    
    \begin{proof} \,
        \begin{enumerate}[(i)]
            \item By direct computation we have that $e_g \cdot a = \theta_g( \theta_{g^{-1}}(1_ga)) = 1_g a,$  with $g\in G$ and $a \in A.$
            \item The equality $e_g \cdot x = e^{\sigma''}_g \cdot x$ is immediate, if $\sigma(g,g^{-1})=0$ then $(ii)$ holds trivially. Suppose that $\sigma(g,g^{-1})\neq 0$, then
            \begin{align*}
                e^{\sigma''}_g \cdot x &= \xi(g)\xi(g^{-1})(1_g \delta_g)(1_{g^{-1}} \delta_{g^{-1}}) \cdot x \cdot (1_g \delta_g)(1_{g^{-1}} \delta_{g^{-1}}) \\ 
                &= \xi(g)\xi(g^{-1}) \sigma(g,g^{-1})\sigma(g,g^{-1})(1_g \delta_1) \cdot x \cdot (1_g \delta_1) \\ 
                &\overset{\eqref{e_xiformula}}{=} (1_g \delta_1) \cdot x \cdot (1_g \delta_1) \\ 
                &= 1_g \cdot x \cdot 1_g,
            \end{align*}
            where $g\in G, x \in X.$
            \item Let $g \in G$ and $y \in Y$, first observe that if $\sigma(g,g^{-1})=0$, then the equality is trivial since $e^\sigma = 0$ and $e^{\sigma''}=0$. On the other hand if $\sigma(g,g^{-1})\neq 0$, then,
            \begin{align*}
                e^{\sigma}_g \otimes_{{\kappa_{par}^{\sigma''}G}}y 
                &= \sigma(g,g^{-1})^{-1} [g]^\sigma[g^{-1}]^\sigma\otimes_{{\kappa_{par}^{\sigma''}G}} y\\
                &\overset{\eqref{e_xiformula}}{=}1_{B^\sigma} \cdot (\xi(g)[g^{-1}]^{\sigma'}) \otimes_{\kappa_{par}^{\sigma''}G} y \\
                & \overset{\eqref{e_sigmaprimeisomorphism}}{=} 1_{B^\sigma} \cdot [g^{-1}]^{\sigma''} \otimes_{\kappa_{par}^{\sigma''}G} y \\
                &= 1_{B^\sigma} \otimes_{{\kappa_{par}^{\sigma''}G}} [g^{-1}]^{\sigma''} \cdot y
            \end{align*}
            and
            \begin{align*}
                e^{\sigma}_g \otimes_{{\kappa_{par}^{\sigma''}G}} y &= e^{\sigma}_ge^{\sigma}_g \otimes_{{\kappa_{par}^{\sigma''}G}} y \\ 
                &=\sigma(g,g^{-1})^{-1}  \sigma(g,g^{-1})^{-1}[g]^\sigma[g^{-1}]^\sigma 1_{B^\sigma} [g]^\sigma[g^{-1}]^\sigma \otimes_{{\kappa_{par}^{\sigma''}G}} y\\
                &\overset{\ref{parproj10}}{=} \xi(g)\xi(g^{-1}) 1_{B^\sigma} \cdot [g]^{\sigma'}[g^{-1}]^{\sigma'} \otimes_{{\kappa_{par}^{\sigma''}G}} y\\ 
                & \overset{\eqref{e_sigmaprimeisomorphism}}{=} 1_{B^\sigma} \cdot [g]^{\sigma''}[g^{-1}]^{\sigma''} \otimes_{{\kappa_{par}^{\sigma''}G}} y\\ 
                &= 1_B \cdot e^{\sigma''}_g  \otimes_{{\kappa_{par}^{\sigma''}G}} y = 1_B \otimes_{{\kappa_{par}^{\sigma''}G}} e^{\sigma''}_g \cdot y.
            \end{align*}
            \item For $a \in A$ and $x \in X$ we obtain, using (i) and (ii), that
            \begin{align*}
                e_g \cdot (a \otimes_{A^e} x) &= e_g \cdot a \otimes_{A^e} e_g \cdot x\\ 
                &= 1_g a  \otimes_{A^e} (1_g \delta_1) \cdot x\cdot (1_g \delta_1) \\ 
                &= a \cdot (1_g \otimes 1_g) \otimes_{A^e} (1_g \delta_1) \cdot x\cdot (1_g \delta_1) \\ 
                &= a  \otimes_{A^e} (1_g \delta_1) \cdot x\cdot (1_g \delta_1) \\
                &= a \otimes_{A^e} e_g \cdot x
            \end{align*}
            and
            \begin{align*}
                e_g \cdot (a \otimes_{A^e} x) &= e_g \cdot a \otimes_{A^e} e_g \cdot x \\ 
                &= 1_g a  \otimes_{A^e} (1_g \delta_1) \cdot x\cdot (1_g \delta_1) \\
                &= 1_g a  \otimes_{A^e} (1_g \otimes 1_g) \cdot x \\
                &= 1_g a \cdot (1_g \otimes 1_g) \otimes_{A^e} x \\
                &= 1_g a  \otimes_{A^e} x\\
                &= e_g \cdot a  \otimes_{A^e} x.
            \end{align*}
        \end{enumerate}
    \end{proof}

\begin{remark} \label{r_iota}
    Recall that by Remark \ref{rem:BrightKparMod} we have that $B^\sigma$ is a left $\kappa_{par}^{\sigma'}G$-module with the structure given by
    \[
        [g]^{\sigma'} \cdot w := [g]^\sigma w [g^{-1}]^\sigma ,
    \]
    where $g\in G, w \in B^\sigma.$ Thus, by \eqref{e_sigmaprimeisomorphism} we have that
    \[
        [g]^{\sigma''} \cdot w := \xi(g) [g]^\sigma w [g^{-1}]^\sigma.
    \]
    In particular, 
    \begin{equation} \label{e_BsigmaLeftKparGModuleConsequence}
    e^{\sigma''}_g \cdot w =e^{\sigma}_gw. 
    \end{equation} 
    Indeed, if $\sigma(g,g^{-1})=0$, then $e_g^{\sigma''}=0$ and $e_g^{\sigma}=0$. On the other hand, if $\sigma(g,g^{-1})\neq 0$, then
    \begin{align*}
        e^{\sigma''}_g \cdot w  &= [g]^{\sigma''}[g^{-1}]^{\sigma''} \cdot w \\ 
                                &= \xi(g)\xi(g^{-1}) [g]^{\sigma'}[g^{-1}]^{\sigma'} \cdot w \\
                                &= \sigma(g,g^{-1})^{-1} \sigma(g^{-1},g)^{-1}[g]^{\sigma}[g^{-1}]^{\sigma}w[g]^{\sigma}[g^{-1}]^{\sigma} \\ 
                                &= e_g^{\sigma} w e_g^{\sigma} = e_g^{\sigma}w.
    \end{align*}
    Thus, observe that the map $\iota_0: \kappa_{par}^{\sigma''}G \ni x \mapsto x \cdot 1_{B^{\sigma}} \in B^\sigma$ is a morphism of left $\kappa_{par}^{\sigma''}G$-modules, in particular the restriction of $\iota_0$ to $B^{\sigma''}$ is a morphism of $\kappa$-algebras 
    \begin{equation} \label{e_iota}
        \iota: B^{\sigma''}\to B^\sigma
    \end{equation}
    such that $\iota(e_g^{\sigma''})= e_g^\sigma$. 
\end{remark}

Recall that  by Corollary \ref{cor:hom} the algebra $\Lambda$ is a left $\kappa_{par}^{\sigma}G$-module with action, given by 
   \begin{equation} \label{e_KparG-mod-ArxG}
    [h]^{\s } \cdot (a_g\delta_g) := (1_h \delta_h) (a_g\delta_g).
   \end{equation}
In particular $e_h^{\sigma} \cdot (a_g\delta_g) := 1_h a_g\delta_g$. Therefore, we have that $\Lambda$ is a left $B^{\sigma''}$-module via $\iota$, explicitly the left action is determined by
\begin{equation} \label{e_B_module_structure_of_AskewG}
    e_g^{\sigma''} \cdot a_h \delta_h := e_g^\sigma \cdot a_h \delta_h = 1_g a_h \delta_h.
\end{equation}

\begin{proposition} \label{p_ArtG-Mod-XtensorArtG}
    Let $X$ be a right $\kappa_{par}^{\sigma''}G$-module. Then, $X \otimes_{B^{\sigma''}} \Lambda$ is a $\Lambda$-bimodule with actions:
    \begin{equation}
        a_g \delta_g \cdot (x \otimes_{B^{\sigma''}} c_t \delta_t) := x \cdot [g^{-1}]^{\sigma''} \otimes_{ B^{\sigma''}} (a_g \delta_g)(c_t \delta_t)
    \end{equation}
    and
    \begin{equation} 
        (x \otimes_{B^{\sigma''}} c_t \delta_t) \cdot a_g \delta_g := x \otimes_{B^{\sigma''}} (c_t \delta_t) (a_g \delta_g).
    \end{equation}
    Thus, we obtain a functor $- \otimes_{B^{\sigma''}} \Lambda: \textbf{Mod-}\kappa_{par}^{\sigma''}G \to \Lambda^e\textbf{-Mod}$.
\end{proposition}

\begin{proof}
    The right action is well-defined since it is induced by the right multiplication in $\Lambda$. For the left action consider the map
    \[
        \tilde{\Theta}_{a_g \delta_g}: X \times \Lambda\to X \otimes_{ B^{\sigma''}} \Lambda \text{ such that } \tilde{\Theta}_{a_g \delta_g}(x, c_t\delta_t) = x \cdot [g^{-1}]^{\sigma''} \otimes_{B^{\sigma''}}(a_g \delta_g)(c_t \delta_t).
    \]
    Observe that
    \begin{align*}
        \tilde{\Theta}_{a_g \delta_g}(x \cdot{ e^{\sigma''}_h}, c_t\delta_t)
         &= (x \cdot { e^{\sigma''}_h}) \cdot {  [g^{-1}]^{\sigma''} } \otimes_{B^{\sigma''}} (a_g \delta_g)(c_t \delta_t) \\ 
                                    &= x \cdot   [g^{-1}]^{\sigma''}  e^{\sigma''}_{gh} \otimes_{B^{\sigma''}} (a_g \delta_g)(c_t \delta_t) \\ 
                                    &= x \cdot  [g^{-1}]^{\sigma''}  \otimes_{B^{\sigma''}} e^{\sigma''}_{gh}  \cdot \left((a_g \delta_g)(c_t \delta_t)\right) \\ 
                                    &= x \cdot {  [g^{-1}]^{\sigma''} } \otimes_{B^{\sigma''}} (1_{gh} a_g \delta_g)(c_t \delta_t) \\
                                    &= x \cdot {  [g^{-1}]^{\sigma''} } \otimes_{B^{\sigma''}} (a_g \delta_g)(1_h \delta_1)(c_t \delta_t) \\
                                    &= x \cdot {  [g^{-1}]^{\sigma''} } \otimes_{B^{\sigma''}} (a_g \delta_g)\left({ e^{\sigma''}_h} \cdot (c_t \delta_t)\right) \\
                                    &= \tilde{\Theta}_{a_g \delta_g}(x , { e^{\sigma''}_h} \cdot  c_t\delta_t).
    \end{align*}
    Then, the map
    \[
        \Theta_{a_g \delta_g}: X \otimes_{B^{\sigma''}} \Lambda \to X \otimes_{B^{\sigma''}} \Lambda \text{ such that } \Theta_{a_g \delta_g}(x \otimes_{B^{\sigma''}} c_t\delta_t) = x \cdot [g^{-1}] \otimes_{B^{\sigma''}} (a_g \delta_g) (c_t \delta_t)
    \]
    is well-defined, for all $g \in G$ and $a_g \in D_g$. For simplicity of notation it is convenient to consider $X$ as a right $\kappa_{par}G$-module and $\Lambda$ as a left   $B$-module via the  epimorphism \eqref{eq:kparGepi}, i.e.  the action of $[g]$ is the same as that of 
    $[g]^{\sigma''}.$ Now to verify the associativity of the left action observe that
    \begin{align*}
        \Theta_{b_h \delta_h} \Theta_{a_g \delta_g}(x \otimes_{B^{\sigma''}} c_t\delta_t) &= \Theta_{b_h \delta_h}(x \cdot [g^{-1}] \otimes_{B^{\sigma''}} (a_g \delta_g) (c_t \delta_t)) \\
        &= x \cdot [g^{-1}][h^{-1}] \otimes_{B^{\sigma''}} (b_h \delta_h)(a_g \delta_g) (c_t \delta_t) \\ 
        &= x \cdot [g^{-1}][h^{-1}] \otimes_{B^{\sigma''}} (1_h b_h \delta_h)(a_g \delta_g) (c_t \delta_t) \\ 
        &= x \cdot [g^{-1}][h^{-1}] \otimes_{B^{\sigma''}} e_h \cdot (b_h \delta_h)(a_g \delta_g) (c_t \delta_t) \\ 
        &= x \cdot [g^{-1}][h^{-1}]e_h \otimes_{B^{\sigma''}} (b_h \delta_h)(a_g \delta_g) (c_t \delta_t) \\ 
        &= x \cdot [g^{-1}h^{-1}]e_h \otimes_{B^{\sigma''}} \left((b_h \delta_h)(a_g \delta_g)\right) (c_t \delta_t) \\ 
        &= x \cdot [g^{-1}h^{-1}] \otimes_{B^{\sigma''}} e_h \cdot \left(\left((b_h \delta_h)(a_g \delta_g)\right) (c_t \delta_t)\right) \\ 
        &= x \cdot [g^{-1}h^{-1}] \otimes_{B^{\sigma''}} (b_h \delta_h)(a_g \delta_g)(c_t \delta_t) \\ 
        &= \Theta_{(b_h \delta_h )(a_g \delta_g)}(x \otimes_{B^{\sigma''}} c_t\delta_t).
    \end{align*}
    Thus, the left module action $a_g \delta_g \cdot (x \otimes_{B^{\sigma''}} c_t \delta_t)$ is well-defined. Finally, for $g,h \in G$ and $a_g \in D_g$, $b_h \in D_h$ we have
    \begin{align*}
        \left(a_g \delta_g \cdot (x \otimes_{B^{\sigma''}} c_t \delta_t)\right) \cdot b_h \delta_h &= \left(x \cdot [g^{-1}] \otimes_{B^{\sigma''}} (a_g \delta_g)(c_t \delta_t)\right) \cdot b_h \delta_h  \\ 
        &= x \cdot [g^{-1}] \otimes_{B^{\sigma''}} (a_g \delta_g)(c_t \delta_t)(b_h \delta_h) \\ 
        &= a_g \delta_g \cdot \left(x \otimes_{B^{\sigma''}} (c_t \delta_t)(b_h \delta_h)\right) \\ 
        &= a_g \delta_g \cdot \left((x \otimes_{B^{\sigma''}} c_t \delta_t) \cdot b_h \delta_h\right).
    \end{align*}
    Whence, we conclude that $X \otimes_{B^{\sigma''}} \Lambda$ is a $\Lambda$-bimodule.
\end{proof}

\begin{proposition} \label{p_inXB}
    The functors
    \[
        -\otimes_{\kappa_{par}^{\sigma''}G} (A \otimes_{A^e} - ): \textbf{Mod-}\kappa_{par}^{\sigma''}G \times \Lambda^e\textbf{-Mod} \to \textbf{Mod-}\kappa
    \]
    and
    \[
        (- \otimes_{B^{\sigma''}}\Lambda) \otimes_{\Lambda^e}-:\textbf{Mod-}\kappa_{par}^{\sigma''}G \times \Lambda^e\textbf{-Mod} \to \textbf{Mod-}\kappa
    \]  are naturally isomorphic.
\end{proposition}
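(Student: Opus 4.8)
The plan is to write down an explicit natural transformation between the two bifunctors and to produce an explicit inverse, rather than to invoke associativity of the tensor product directly: the naive associativity fails because the left $B^{\sigma''}$-action on $\Lambda$ (left multiplication by the $1_g$, as in \eqref{e_B_module_structure_of_AskewG}) does not commute with the left regular $\Lambda$-action, which is precisely why Proposition~\ref{p_ArtG-Mod-XtensorArtG} had to twist the left $\Lambda$-action on $X\otimes_{B^{\sigma''}}\Lambda$ by $[g^{-1}]^{\sigma''}$. So for a right $\kappa_{par}^{\sigma''}G$-module $X$ and a $\Lambda$-bimodule $N$ I would define
\[
\Phi_{X,N}\colon X\otimes_{\kappa_{par}^{\sigma''}G}(A\otimes_{A^e}N)\longrightarrow (X\otimes_{B^{\sigma''}}\Lambda)\otimes_{\Lambda^e}N,\qquad x\otimes(a\otimes_{A^e} n)\longmapsto (x\otimes a\delta_1)\otimes n,
\]
and check that the underlying $\kappa$-trilinear map $X\times A\times N\to(X\otimes_{B^{\sigma''}}\Lambda)\otimes_{\Lambda^e}N$ descends to both tensor products.

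First I would verify the $A^e$-balancing in the middle factor, which is a short computation transporting the elements of $A=D_1$ across the $\Lambda^e$-tensor by its two balancing relations (moving a left, resp. right, regular $\Lambda$-factor from one slot to the other). The crucial point is the $\kappa_{par}^{\sigma''}G$-balancing: one must show
\[
(x\cdot[g]^{\sigma''}\otimes a\delta_1)\otimes n=\big(x\otimes ([g]\cdot a)\,\delta_1\big)\otimes [g]\cdot n,
\]
where the right-hand side uses the diagonal action of Proposition~\ref{p_AeMAction}, namely $[g]\cdot a=\theta_g(1_{g^{-1}}a)$ and $[g]\cdot n=\xi(g)(1_g\delta_g)\,n\,(1_{g^{-1}}\delta_{g^{-1}})$. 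I would pull the factors $1_g\delta_g$ and $1_{g^{-1}}\delta_{g^{-1}}$ off $N$ onto the $\Lambda$-slot using the $\Lambda^e$-balancing and the twisted left action of Proposition~\ref{p_ArtG-Mod-XtensorArtG}; the scalar $\xi(g)\,\sigma(g^{-1},g)$ that appears collapses to $1$ by \eqref{e_xiformula}, and the leftover idempotent is absorbed via $[g]^{\sigma''}e_{g^{-1}}^{\sigma''}=[g]^{\sigma''}$, which is a consequence of the defining relations of $\kappa_{par}^{\sigma''}G$ (using $\sigma''(g,g^{-1})=1$). Since the $[g]^{\sigma''}$ generate $\kappa_{par}^{\sigma''}G$ and the set of balancing elements is a subalgebra, checking it on generators suffices.

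For the inverse I would first note that every element of $(X\otimes_{B^{\sigma''}}\Lambda)\otimes_{\Lambda^e}N$ can be brought to the form $(x\otimes a\delta_1)\otimes n$ with $a\in A$, by means of $a_g\delta_g=(a_g\delta_1)(1_g\delta_g)$ together with the identity $(x\otimes a_g\delta_g)\otimes n=(x\otimes a_g\delta_1)\otimes (1_g\delta_g)\,n$, and then set
\[
\Psi_{X,N}\big((x\otimes a_g\delta_g)\otimes n\big)=x\otimes\big(a_g\otimes_{A^e} (1_g\delta_g)\,n\big).
\]
Its well-definedness reduces to the $B^{\sigma''}$-balancing between $X$ and $\Lambda$ (which follows from Lemma~\ref{l_teneq}(i),(iv)) and to the two $\Lambda^e$-balancing relations. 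The one coming from the right regular $\Lambda$-action is a direct manipulation inside $A\otimes_{A^e}N$ using the crossed-product multiplication and the $A^e$-balancing. The relation coming from the twisted left $\Lambda$-action is the main obstacle: it forces one to push the leading $[h^{-1}]^{\sigma''}$ through the $\kappa_{par}^{\sigma''}G$-tensor, re-expand it by the diagonal formula of Proposition~\ref{p_AeMAction}, and then reconcile the resulting $\theta$- and $\sigma$-factors with the multiplication of $\Lambda$, where \eqref{e_xiformula}, \eqref{e_B_module_structure_of_AskewG} and Lemma~\ref{l_teneq} are invoked repeatedly.

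Finally I would check that $\Phi$ and $\Psi$ are mutually inverse: the composite $\Psi\circ\Phi$ is immediate (the case $g=1$, where $(1_1\delta_1)n=n$), and $\Phi\circ\Psi$ follows at once from the normal-form identity above. Naturality in $X$ and in $N$ is transparent, since every structure map involved is given by one of the explicit formulas and $\Phi$, $\Psi$ are defined by such formulas. I expect the left-action balancing condition for $\Psi$ to be the only genuinely delicate step; all the remaining verifications are bookkeeping with the partial action $\theta$ and the idempotents $1_g$.
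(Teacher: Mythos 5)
Your proposal is correct and follows essentially the same route as the paper: explicit mutually inverse natural maps (your $\Phi_{X,N}$ and $\Psi_{X,N}$ agree with the paper's $\gamma_{(X,M)}$ and $\psi_{(X,M)}$ after moving $a$ across the $\Lambda^e$-balancing), with well-definedness checked via the $A^e$-, $\kappa_{par}^{\sigma''}G$-, $B^{\sigma''}$- and $\Lambda^e$-balancing conditions, and the scalar collapse $\xi(g)\sigma(g^{-1},g)=1$ from \eqref{e_xiformula} playing the same role. You also correctly single out the twisted left $\Lambda$-action balancing as the delicate step, which is exactly where the paper's longest computation occurs.
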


\begin{proof}
    Let $X$ be a right $\kappa_{par}^{\sigma''}G$-module and $M$ a $\Lambda$-bimodule. For a fixed $x \in X$ define
    \begin{align*}
        \tilde{\gamma}_{x,M}: A \times M & \to (X \otimes_{B^{\sigma''}} \Lambda)  \otimes_{\Lambda^e}M \\ 
        (a,m) &\mapsto (x \otimes_{B^{\sigma''}} 1_A \delta_1) \otimes_{\Lambda^e} a \cdot m,
    \end{align*} 
    Observe that $\tilde{\gamma}_{x,M}$ is $A^e$-balanced. Indeed, for $d \otimes c \in A^e$ we have
    \begin{align*}
        \tilde{\gamma}_{x,M}(a\cdot(d \otimes c),m) &= \tilde{\gamma}_{x,M}(cad,m) \\ 
                                                &= (x \otimes_{B^{\sigma''}} 1_A \delta_1) \otimes_{\Lambda^e} cad \cdot m \\
                                                &= (x \otimes_{B^{\sigma''}} 1_A \delta_1)\cdot (c \delta_1) \otimes_{\Lambda^e} a\cdot (d \cdot m) \\            
                                                &= (x \otimes_{B^{\sigma''}} c \delta_1) \otimes_{\Lambda^e} a\cdot (d \cdot m) \\
                                                &= (c \delta_1)\cdot (x \otimes_{B^{\sigma''}} 1_A \delta_1) \otimes_{\Lambda^e} a\cdot (d \cdot m) \\
                                                &= (x \otimes_{B^{\sigma''}} 1_A \delta_1) \otimes_{\Lambda^e} a\cdot (d \cdot m) \cdot (c \delta_1) \\
                                                &= (x \otimes_{B^{\sigma''}} 1_A \delta_1) \otimes_{\Lambda^e} a\cdot (d \cdot m \cdot c) \\
                                                &= \tilde{\gamma}_{x,M}(a,(d \otimes c) \cdot m),
    \end{align*}
    so, $\tilde{\gamma}_{x,M}$ is $A^e$-balanced. Therefore, the following map is well-defined
    \begin{align*}
        \gamma_{x,M}: A \otimes_{A^e} M & \to (X \otimes_{B^{\sigma''}} \Lambda)  \otimes_{\Lambda^e}M \\ 
        a \otimes_{A^e} m &\mapsto (x \otimes_{B^{\sigma''}} 1_A \delta_1) \otimes_{\Lambda^e} a \cdot m,
    \end{align*} 
    Now define the function
    \begin{align*}
        \tilde{\gamma}_{(X,M)} : X \times (A\otimes_{A^e}M) & \to (X \otimes_{B^{\sigma''}} \Lambda)  \otimes_{\Lambda^e}M \\ 
        (x,a\otimes_{A^e}m) &\mapsto \gamma_{x,M}(a\otimes_{A^e}m)=(x \otimes_{B^{\sigma''}} 1_A \delta_1) \otimes_{\Lambda^e} a \cdot m.
    \end{align*}
    We want to show that  $\tilde{\gamma}_{(X,M)}$ is $\kappa_{par}^{\sigma''}G$-balanced. Recall that  the $\Lambda$-bimodule structure of $X \otimes_{B^{\sigma''}} \Lambda$ is  given by Proposition \ref{p_ArtG-Mod-XtensorArtG}.   Again, for simplicity of notation, we consider the involved 
    $\kappa_{par}^{\sigma''}G$-modules  as  $\kappa_{par}G$-modules and $B^{\sigma''}$-modules as $B$-modules, where the action of $[g]$ (respectively, $e_g$) is the same as that of $[g]^{\sigma''}$ (respectively, $e^{\sigma''}_g$). Let $h \in G$, such that $\sigma(h,h^{-1}) \neq 0$. Then,
    \begin{align*}
        \tilde{\gamma}_{(X,M)}(x \cdot [h],a\otimes_{A^e}m) 
        &= (x \cdot [h] \otimes_{B^{\sigma''}} 1_A \delta_1) \otimes_{\Lambda^e} a \cdot m \\
        &= (x \cdot [h]e_{h^{-1}} \otimes_{B^{\sigma''}} 1_A \delta_1) \otimes_{\Lambda^e} a \cdot m \\ 
        &= (x \cdot [h] \otimes_{B^{\sigma''}} e_{h^{-1}} \cdot (1_A \delta_1)) \otimes_{\Lambda^e} a \cdot m \\ 
        &= \big(x \cdot [h] \otimes_{B^{\sigma''}} 1_{h^{-1}}\delta_1 \big) \otimes_{\Lambda^e} a \cdot m \\ 
        &= \big(x \cdot [h] \otimes_{B^{\sigma''}}\sigma(h,h^{-1})^{-1}(1_{h^{-1}} \delta_{h^{-1}})(1_h \delta_h) \big)\otimes_{\Lambda^e} a \cdot m \\ 
        &= \xi(h) (1_{h^{-1}} \delta_{h^{-1}}) \cdot \big(x \otimes_{B^{\sigma''}}1_A \delta_1 \big) \cdot (1_h \delta_h)\otimes_{\Lambda^e} a \cdot m \\ 
        &= \xi(h) \big(x \otimes_{B^{\sigma''}}1_A \delta_1 \big) \otimes_{\Lambda^e} (1_h \delta_h) \cdot (a \cdot m) \cdot (1_h \delta_{h^{-1}}) \\ 
        &= \xi(h) \big(x \otimes_{B^{\sigma''}}1_A \delta_1 \big) \otimes_{\Lambda^e} (1_h \delta_h)(a \delta_1) \cdot m \cdot (1_h \delta_{h^{-1}}) \\ 
        &= \xi(h) \big(x \otimes_{B^{\sigma''}}1_A \delta_1 \big) \otimes_{\Lambda^e} (\theta_h(1_{h^{-1}}a)\delta_h) \cdot m \cdot (1_h \delta_{h^{-1}}) \\
        &= \xi(h) \big(x \otimes_{B^{\sigma''}}1_A \delta_1 \big) \otimes_{\Lambda^e} (\theta_h(1_{h^{-1}}a) \delta_1) (1_h \delta_h) \cdot m \cdot (1_h \delta_{h^{-1}}) \\
        &= \big(x \otimes_{B^{\sigma''}}1_A \delta_1 \big) \otimes_{\Lambda^e} (\theta_h(1_{h^{-1}}a) \delta_1) (\xi(h) [h]^{\sigma'} \cdot m) \\
        &= \big(x \otimes_{B^{\sigma''}}1_A \delta_1 \big) \otimes_{\Lambda^e} (([h] \cdot a)\delta_1) \cdot ([h] \cdot m)  \\ 
        &= \tilde{\gamma}_{(X,M)}(x , ([h] \cdot a)\otimes_{A^e}([h] \cdot m)) \\
        &= \tilde{\gamma}_{(X,M)}(x ,[h] \cdot (a\otimes_{A^e}m)).
    \end{align*}
    Then, the following map is well-defined
    \begin{align*}
        \gamma_{(X,M)} : X \otimes_{\kappa_{par}^{\sigma''}G} (A\otimes_{A^e}M) & \to (X \otimes_{B^{\sigma''}} \Lambda)  \otimes_{\Lambda^e}M \\ 
        x \otimes_{\kappa_{par}^{\sigma''}G} (a\otimes_{A^e}m) &\mapsto (x \otimes_{B^{\sigma''}} 1_A \delta_1) \otimes_{\Lambda^e} a \cdot m.
    \end{align*}

    To obtain its inverse map we fix an $m \in M$ and define the map
    \begin{align*}
        \tilde{\psi}_{X,m}: X \times \Lambda & \to  X \otimes_{\kappa_{par}^{\sigma''}G} (A\otimes_{A^e}M) \\ 
        (x, a_h \delta_h)&\mapsto x \otimes_{\kappa_{par}^{\sigma''}G}(1_A \otimes_{A^e} (a_h \delta_h) \cdot m).
    \end{align*}
    Then, $\tilde{\psi}_{X,m}$ is $B^{\sigma''}$-balanced. Indeed,
    \begin{align*}
        \tilde{\psi}_{X,m}(x \cdot e_g^{\sigma''}, a_h \delta_h)    &=  x \cdot e_g^{\sigma''} \otimes_{\kappa_{par}^{\sigma''}G}(1_A \otimes_{A^e} (a_h \delta_h) \cdot m) \\
        &=  x \otimes_{\kappa_{par}^{\sigma''}G} e_g^{\sigma''}  \cdot (1_A \otimes_{A^e} (a_h \delta_h) \cdot m) \\
        (\text{by Lemma \ref{l_teneq} } (iv))&=  x \otimes_{\kappa_{par}^{\sigma''}G} \big( 1_A \otimes_{A^e} e_g \cdot( (a_h \delta_h) \cdot m) \big)  \\
		(\text{by Lemma \ref{l_teneq} } (ii))&=  x \otimes_{\kappa_{par}^{\sigma''}G} \big( 1_A \otimes_{A^e} 1_g ( (a_h \delta_h) \cdot m) 1_g \big)  \\ 
        &=  x \otimes_{\kappa_{par}^{\sigma''}G} \big( 1_g 1_A \otimes_{A^e}  (1_g a_h \delta_h) \cdot m \big) \\
		&=  x \otimes_{\kappa_{par}^{\sigma''}G} \big( 1_A  \otimes_{A^e}  (1_g a_h \delta_h) \cdot m \big)  \\
        &=\tilde{\psi}_{X,m}(x, 1_ga_h \delta_h) \\
        &=\tilde{\psi}_{X,m}(x, e_g^{\sigma''} \cdot (a_h \delta_h)).
    \end{align*}
    Then, the following  map is well-defined
    \begin{align*}
        \psi_{X,m}: X \otimes_{B^{\sigma''}} \Lambda & \to  X \otimes_{\kappa_{par}^{\sigma''}G} (A\otimes_{A^e}M)\\ 
        x \otimes_{B^{\sigma''}} (a_h \delta_h) &\mapsto x \otimes_{\kappa_{par}^{\sigma''}G}(1_A \otimes_{A^e} (a_h \delta_h)\cdot m).
    \end{align*}
    Hence, we can define the map
    \begin{align*}
        \tilde{\psi}_{(X,M)}: (X \otimes_{B^{\sigma''}} \Lambda)  \times M  & \to  X \otimes_{\kappa_{par}^{\sigma''}G} (A\otimes_{A^e}M)\\ 
        (x \otimes_{B^{\sigma''}} a_h \delta_h, m) &\mapsto \psi_{X,m}(x \otimes_{B^{\sigma''}} a_h \delta_h)=x \otimes_{\kappa_{par}^{\sigma''}G}(1_A \otimes_{A^e} (a_h \delta_h) \cdot m),
    \end{align*}
    which is $\Lambda^e$-balanced. Indeed,
    \begin{align*}
        \tilde{\psi}_{(X,M)}((x \otimes_{B^{\sigma''}} (a_h \delta_h))\cdot (b_k \delta_k), m) 
        &=  \tilde{\psi}_{(X,M)}(x \otimes_{B^{\sigma''}} (a_h \delta_h)(b_k \delta_k), m) \\
        &= x \otimes_{\kappa_{par}^{\sigma''}G}(1_A \otimes_{A^e} (a_h \delta_h)(b_k \delta_k) \cdot m) \\ 
        &= x \otimes_{\kappa_{par}^{\sigma''}G}(1_A \otimes_{A^e} (a_h \delta_h)\cdot ((b_k \delta_k) \cdot m)) \\
        &=\tilde{\psi}_{(X,M)}((x \otimes_{B^{\sigma''}} a_h \delta_h),  (b_k \delta_k )\cdot m).
    \end{align*}
    Recall that by the surjective morphism of algebras (\ref{eq:kparGepi}) we are considering $\kappa_{par}^{\sigma''}G$-modules as $\kappa_{par}G$-modules. Thus, for the left action, we have
    \begin{align*}
        \tilde{\psi}_{(X,M)}&((b_k \delta_k) \cdot (x \otimes_{B^{\sigma''}} (a_h \delta_h)), m)  \\ 
        &= \tilde{\psi}_{(X,M)}(x \cdot [k^{-1}] \otimes_{B^{\sigma''}} (b_k \delta_k)(a_h \delta_h), m) \\
        &= x \cdot [k^{-1}] \otimes_{\kappa_{par}^{\sigma''}G} \bigl( 1_A \otimes_{A^e} (b_k \delta_k)(a_h \delta_h) \cdot m \bigr) \\ 
        &= x \cdot [k^{-1}] \otimes_{\kappa_{par}^{\sigma''}G} \bigl( b_k \otimes_{A^e} (1_k \delta_k)(a_h \delta_h) \cdot m \bigr) \\ 
        &= x \cdot [k^{-1}] \otimes_{\kappa_{par}^{\sigma''}G} \bigl( 1_A \otimes_{A^e} (1_k \delta_k)(a_h \delta_h) \cdot m \cdot (b_k \delta_1)\bigr) \\ 
        &= x  \otimes_{\kappa_{par}^{\sigma''}G} [k^{-1}] \cdot \bigl( 1_A \otimes_{A^e} (1_k \delta_k)(a_h \delta_h) \cdot m \cdot (b_k \delta_1)\bigr) \\ 
        &= x  \otimes_{\kappa_{par}^{\sigma''}G} \bigl( [k^{-1}] \cdot  1_A \otimes_{A^e} [k^{-1}] \cdot ((1_k \delta_k)(a_h \delta_h) \cdot m \cdot (b_k \delta_1))\bigr) \\ 
        &\overset{\eqref{e_sigmaprimeisomorphism}}{=} x  \otimes_{\kappa_{par}^{\sigma''}G} \bigl( [k^{-1}] \cdot  1_A \otimes_{A^e} \xi(k)[k^{-1}]^{\sigma'} \cdot ((1_k \delta_k)(a_h \delta_h) \cdot m \cdot (b_k \delta_1))\bigr) \\
        &= x  \otimes_{\kappa_{par}^{\sigma''}G} \bigl( [k^{-1}] \cdot  1_A \otimes_{A^e} (\xi(k)(1_{k^{-1}}\delta_{k^{-1}})(1_k \delta_k)(a_h \delta_h) \cdot m \cdot (b_k \delta_1)(1_k \delta_k))\bigr) \\
        &\overset{\eqref{e_xiformula}}{=}x  \otimes_{\kappa_{par}^{\sigma''}G} \Bigl( 1_{k^{-1}} \otimes_{A^e} (1_{k^{-1}}a_h \delta_h) \cdot m \cdot (b_k \delta_1)(1_k \delta_k)\Bigr) \\ 
        &= x \otimes_{\kappa_{par}^{\sigma''}G} \Bigl(1_{k^{-1}} \otimes_{A^e} \big( (a_h \delta_h) \cdot m \cdot (b_k \delta_1)(1_k \delta_k)\big)\Bigr) \\ 
        &= x  \otimes_{\kappa_{par}^{\sigma''}G} \Bigl( 1_A \otimes_{A^e} \big( (a_h \delta_h) \cdot m \cdot (b_k \delta_1)(1_k \delta_k)(1_{k^{-1}} \delta_1)\big)\Bigr) \\ 
        &= x  \otimes_{\kappa_{par}^{\sigma''}G} \Bigl( 1_A \otimes_{A^e} \big( (a_h \delta_h) \cdot m \cdot (b_k \delta_1)(1_k \delta_k)\big)\Bigr) \\ 
        &= x  \otimes_{\kappa_{par}^{\sigma''}G} \Bigl( 1_A \otimes_{A^e} \big( (a_h \delta_h) \cdot m \cdot (b_k \delta_k)\big)\Bigr) \\
        &= \tilde{\psi}_{(X,M)}((x \otimes_{B^{\sigma''}} a_h \delta_h), m \cdot (b_k \delta_k) ).
    \end{align*}
    Thus, the map
    \begin{align*}
        \psi_{(X,M)} :  (X \otimes_{B^{\sigma''}} \Lambda)  \otimes_{\Lambda^e} M  & \to  X \otimes_{\kappa_{par}^{\sigma''}G} (A\otimes_{A^e}M) \\ 
        (x \otimes_{B^{\sigma''}} a_h \delta_h) \otimes_{\Lambda^e}  m &\mapsto x \otimes_{\kappa_{par}^{\sigma''}G}(1_A \otimes_{A^e} (a_h \delta_h) \cdot m)
    \end{align*}
    is well-defined. Observe that $\gamma_X$ and $\psi_X$ are mutual inverses since
    \begin{align*}
        \psi_{(X,M)}\big( \gamma_{(X,M)}(x \otimes_{\kappa_{par}^{\sigma''}G}(a\otimes_{A^e}m))\big) &=  \psi_{(X,M)}\big( (x \otimes_{B^{\sigma''}} 1_A \delta_1) \otimes_{\Lambda^e} a \cdot m\big) \\ 
        &=x \otimes_{\kappa_{par}^{\sigma''}G}(1_A \otimes_{A^e} a \cdot m) \\
        &=x \otimes_{\kappa_{par}^{\sigma''}G}(a \otimes_{A^e} m)
    \end{align*}
    and
    \begin{align*}
        \gamma_{(X,M)}\big( \psi_{(X,M)}((x \otimes_{B^{\sigma''}} a_h \delta_h) \otimes_{\Lambda^e} m) \big)&= \gamma_{(X,M)} \big(x \otimes_{\kappa_{par}^{\sigma''}G}(1_A \otimes_{A^e} a_h \delta_h \cdot m) \big) \\
        &=  (x \otimes_{B^{\sigma''}} 1_A \delta_1) \otimes_{\Lambda^e} a_h \delta_h  \cdot m \\
        &=(x \otimes_{B^{\sigma''}} a_h \delta_h) \otimes_{\Lambda^e} m.
    \end{align*}

    Let $X'$ be another right $\kappa_{par}^{\sigma''}G$-module and $M'$ be a $\Lambda$-bimodule, $f: X \to X'$ a map of right $\kappa_{par}^{\sigma''}G$-modules and $v: M \to M'$ a map of $\Lambda$-bimodules. Then, the following diagram commutes
    \[\begin{tikzcd}
        {X \otimes_{\kappa_{par}^{\sigma''}G} (A\otimes_{A^e}M)} & {X' \otimes_{\kappa_{par}^{\sigma''}G} (A\otimes_{A^e}M')} \\
        {(X \otimes_{B^{\sigma''}} \Lambda)  \otimes_{\Lambda^e}M} & {(X' \otimes_{B^{\sigma''}} \Lambda)  \otimes_{\Lambda^e}M'}
        \arrow["{\overline{(f,v)}}", from=1-1, to=1-2]
        \arrow["{\underline{(f,v)}}", from=2-1, to=2-2]
        \arrow["{\gamma_{(X,M)}}"', from=1-1, to=2-1]
        \arrow["{\gamma_{(X',M')}}"', from=1-2, to=2-2]
    \end{tikzcd}\]
    where, $\overline{(f,v)}=f \otimes_{\kappa_{par}^{\sigma''}G}(1_A \otimes_{A^e} v)$ and $\underline{(f,v)}=(f \otimes_{B^{\sigma''}}1_A\delta_1)\otimes_{\Lambda^e}v$. Indeed,

    \begin{align*}
        \gamma_{(X',M')} \left( \overline{(f,v)}(x \otimes_{\kappa_{par}^{\sigma''}G}(a \otimes_{A^e} m)) \right)
        &=\gamma_{(X,M)} \left(f(x) \otimes_{\kappa_{par}^{\sigma''}G}(a \otimes_{A^e} v(m))\right) \\
        &= (f(x) \otimes_{B^{\sigma''}} 1_A \delta_1) \otimes_{\Lambda^e} a \cdot v(m) \\
        &= (f(x) \otimes_{B^{\sigma''}} 1_A \delta_1) \otimes_{\Lambda^e} v(a \cdot m) \\ 
        &= \underline{(f,v)}\left( (x \otimes_{B^{\sigma''}} 1_A \delta_1) \otimes_{\Lambda^e} a \cdot m \right) \\
        &= \underline{(f,v)}\left(\gamma_{(X,M)}(x \otimes_{\kappa_{par}^{\sigma''}G}(a \otimes_{A^e} m))  \right).
    \end{align*}
Thus, $\gamma$ is a natural isomorphism.
\end{proof}

\begin{lemma} \label{l_AsmashHisoBAsmashH}
    The map of $\kappa$-modules
    \begin{align*}
        \phi:\Lambda        &\to B^{\sigma} \otimes_{B^{\sigma''}}\Lambda \\ 
            a_h \delta_h    &\mapsto 1_{B^{\sigma}} \otimes_{B^{\sigma''}} a_h \delta_h
    \end{align*}
    is an isomorphism of $\Lambda$-bimodules.
\end{lemma}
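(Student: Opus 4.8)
The plan is to exhibit an explicit two-sided inverse of $\phi$ and then verify that $\phi$ respects both $\Lambda$-actions. Recall from Corollary~\ref{cor:hom} (see \eqref{e_KparG-mod-ArxG}) that $\Lambda$ is a left $\kappa_{par}^{\sigma}G$-module, hence a left $B^{\sigma}$-module, with $e_g^{\sigma}\cdot(a_h\delta_h)=1_g\,a_h\delta_h$; this is exactly the action through which $\Lambda$ is made a left $B^{\sigma''}$-module via $\iota$ (Remark~\ref{r_iota} and \eqref{e_B_module_structure_of_AskewG}), so $e_g^{\sigma''}\cdot(a_h\delta_h)=1_g\,a_h\delta_h$. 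I would define
\[
  \psi\colon B^{\sigma}\otimes_{B^{\sigma''}}\Lambda\to\Lambda,\qquad b\otimes_{B^{\sigma''}}a_h\delta_h\mapsto b\cdot(a_h\delta_h),
\]
using this left $B^{\sigma}$-action. The underlying map $B^{\sigma}\times\Lambda\to\Lambda$ is $\kappa$-bilinear, and it is $B^{\sigma''}$-balanced: a short computation (of the type carried out in Remark~\ref{r_iota} and Lemma~\ref{l_teneq}) shows that the right $B^{\sigma''}$-action on $B^{\sigma}$ obtained by restricting its right $\kappa_{par}^{\sigma''}G$-structure is just $b\cdot w=b\,\iota(w)$, while the left $B^{\sigma''}$-action on $\Lambda$ also factors through $\iota$. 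Associativity of the $B^{\sigma}$-module $\Lambda$ then gives $(b\cdot w)\cdot(a_h\delta_h)=b\cdot\bigl(\iota(w)\cdot(a_h\delta_h)\bigr)=b\cdot\bigl(w\cdot(a_h\delta_h)\bigr)$ for $w\in B^{\sigma''}$, so $\psi$ is well defined.

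Next I would check that $\phi$ and $\psi$ are mutually inverse. Since $1_{B^\sigma}=[1_G]^{\sigma}=1_{\kappa_{par}^{\sigma}G}$ acts as the identity on $\Lambda$, one has $\psi(\phi(a_h\delta_h))=1_{B^\sigma}\cdot(a_h\delta_h)=a_h\delta_h$, whence $\psi\circ\phi=\mathrm{id}_\Lambda$. For $\phi\circ\psi$, by $\kappa$-linearity it suffices to treat a simple tensor $b\otimes_{B^{\sigma''}}a_h\delta_h$ with $b=e_{g_1}^{\sigma}\cdots e_{g_n}^{\sigma}$ a product of generators (the empty product giving $1_{B^\sigma}$). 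Using the identity $e_g^{\sigma}=1_{B^\sigma}\cdot e_g^{\sigma''}$ from Lemma~\ref{l_teneq}(iii) to slide the idempotents across the tensor, together with $e_g^{\sigma''}\cdot(a_h\delta_h)=1_g\,a_h\delta_h$, one rewrites $b\otimes_{B^{\sigma''}}a_h\delta_h=1_{B^\sigma}\otimes_{B^{\sigma''}}1_{g_1}\cdots 1_{g_n}a_h\delta_h$, which is precisely $\phi\bigl(b\cdot(a_h\delta_h)\bigr)=\phi(\psi(b\otimes_{B^{\sigma''}}a_h\delta_h))$. Hence $\phi\circ\psi=\mathrm{id}$.

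It remains to see that $\phi$ is $\Lambda$-bilinear, where the bimodule structure on $B^{\sigma}\otimes_{B^{\sigma''}}\Lambda$ is the one from Proposition~\ref{p_ArtG-Mod-XtensorArtG} with $X=B^{\sigma}$. Right $\Lambda$-linearity is immediate, since there the right action is induced by right multiplication in $\Lambda$: $\phi(a_h\delta_h)\cdot(a_g\delta_g)=1_{B^\sigma}\otimes_{B^{\sigma''}}(a_h\delta_h)(a_g\delta_g)=\phi((a_h\delta_h)(a_g\delta_g))$. The essential point is left $\Lambda$-linearity. Here the left action of Proposition~\ref{p_ArtG-Mod-XtensorArtG} yields
\[
  (a_g\delta_g)\cdot\phi(a_h\delta_h)=\bigl(1_{B^\sigma}\cdot[g^{-1}]^{\sigma''}\bigr)\otimes_{B^{\sigma''}}(a_g\delta_g)(a_h\delta_h)=e_g^{\sigma}\otimes_{B^{\sigma''}}(a_g\delta_g)(a_h\delta_h),
\]
using $1_{B^\sigma}\cdot[g^{-1}]^{\sigma''}=e_g^{\sigma}$ from Lemma~\ref{l_teneq}(iii). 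Sliding $e_g^{\sigma}=1_{B^\sigma}\cdot e_g^{\sigma''}$ across the tensor and applying $e_g^{\sigma''}\cdot(-)=1_g\cdot(-)$, this becomes $1_{B^\sigma}\otimes_{B^{\sigma''}}1_g\bigl((a_g\delta_g)(a_h\delta_h)\bigr)$; but the product $(a_g\delta_g)(a_h\delta_h)$ has coefficient in $D_gD_{gh}\subseteq D_g$, so multiplication by the unit $1_g$ of $D_g$ fixes it, and the expression equals $1_{B^\sigma}\otimes_{B^{\sigma''}}(a_g\delta_g)(a_h\delta_h)=\phi((a_g\delta_g)(a_h\delta_h))$.

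I expect this last absorption step to be the only genuinely delicate point: one must track that the twist $[g^{-1}]^{\sigma''}$ appearing in the left action contributes exactly the idempotent $e_g^{\sigma}$, which is then annihilated against a product already supported on $D_g$. Everything else — well-definedness of $\psi$, the two composition identities, and right linearity — is routine bookkeeping with the module identities collected in Lemma~\ref{l_teneq} and Remark~\ref{r_iota}, so that $\phi$ emerges as an isomorphism of $\Lambda$-bimodules with inverse $\psi$.
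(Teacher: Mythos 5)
Your proposal is correct and follows essentially the same route as the paper: the same inverse map $\psi(w\otimes_{B^{\sigma''}}a_h\delta_h)=w\cdot(a_h\delta_h)$ built from the left $B^{\sigma}$-action \eqref{e_B_module_structure_of_AskewG}, and the same key identities ($e_g^{\sigma}=1_{B^\sigma}\cdot[g^{-1}]^{\sigma''}=1_{B^\sigma}\cdot e_g^{\sigma''}$, sliding idempotents across the tensor, and absorbing $1_g$ into a product whose coefficient lies in $D_gD_{gh}\subseteq D_g$). The only cosmetic difference is that you verify left and right $\Lambda$-linearity separately, running the idempotent computation from the module action toward $\phi$ of the product, whereas the paper does one three-factor computation in the opposite direction.
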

\begin{proof}
    First, using \eqref{e_B_module_structure_of_AskewG} it is easy to show that 
    \begin{align*}
        \phi^{-1}: B^{\sigma} \otimes_{B^{\sigma''}}\Lambda  &\to \Lambda \\ 
        w \otimes_{B^{\sigma''}} a_h \delta_h   &\mapsto w\cdot (a_h \delta_h )
    \end{align*}
    is well-defined, and it is the inverse of $\phi$. Thus, $\phi$ an isomorphism of $\kappa$-modules. Now to verify that $\phi$ is a morphism of $\Lambda$-bimodules we choose arbitrary elements $b_k \delta_k, a_h \delta_h$ and $c_t \delta_t$ in $\Lambda$, then
    \begin{align*}
        \phi((b_k \delta_k)(a_h \delta_h)(c_t \delta_t))  
        &= 1_{B^{\sigma}} \otimes_{B^{\sigma''}} (b_k \delta_k)(a_h \delta_h)(c_t \delta_t) \\  
        &{ = 1_{B^{\sigma}} \otimes_{B^{\sigma''}} (1_k b_k \delta_k)(a_h \delta_h)  (c_t \delta_t) } \\ 
        &\overset{\eqref{e_B_module_structure_of_AskewG}}{=} 1_{B^{\sigma}} \otimes_{B^{\sigma''}} e_{k}^{\sigma''} \cdot ((b_k \delta_k)(a_h \delta_h) (c_t \delta_t) ) \\ 
        &= 1_{B^{\sigma}} \cdot e_{k}^{\sigma''}\otimes_{B^{\sigma''}} (b_k \delta_k)(a_h \delta_h)  (c_t \delta_t) \\ 
        &\overset{\eqref{e_BsigmaLeftKparGModuleConsequence}}{=} { e^\sigma_{k} } \otimes_{B^{\sigma''}} (b_k \delta_k)(a_h \delta_h)  (c_t \delta_t) \\ 
        &\overset{\eqref{e_xisigma}}{=} \big( \xi(k) [k]^\sigma 1_{B^{\sigma}}[k^{-1}]^{\sigma}\otimes_{B^{\sigma''}} (b_k \delta_k)(a_h \delta_h)\big) \cdot (c_t \delta_t) \\ 
        &= \big(1_{B^{\sigma}} \cdot \xi(k)[k^{-1}]^{\sigma'}\otimes_{B^{\sigma''}} (b_k \delta_k)(a_h \delta_h)\big) \cdot (c_t \delta_t) \\
        &\overset{\eqref{e_sigmaprimeisomorphism}}{=} \big(1_{B^{\sigma}} \cdot [k^{-1}]^{\sigma''}\otimes_{B^{\sigma''}} (b_k \delta_k)(a_h \delta_h)\big) \cdot (c_t \delta_t) \\
        &= (b_k \delta_k) \cdot \big(1_{B^{\sigma}} \otimes_{B^{\sigma''}} (a_h \delta_h)\big) \cdot (c_t \delta_t) \\
        &= (b_k \delta_k) \cdot \phi (a_h \delta_h) \cdot (c_t \delta_t).
    \end{align*}
    Thus, $\phi$ is an isomorphism of $\Lambda$-bimodules.
\end{proof}

\begin{corollary} \label{c_F2F1F}
    The functors $F_2F_1$ and $F$ are naturally isomorphic. 
\end{corollary}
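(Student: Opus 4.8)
The plan is to read off the corollary by specializing the two-variable natural isomorphism of Proposition~\ref{p_inXB} at the first argument $X = B^\sigma$ and then transporting the result along the bimodule isomorphism of Lemma~\ref{l_AsmashHisoBAsmashH}. First I would note that $B^\sigma$ is a legitimate first argument for the bifunctor $-\otimes_{\kappa_{par}^{\sigma''}G}(A\otimes_{A^e}-)$: by Remark~\ref{rem:BrightKparMod} together with Remark~\ref{r_left_right_sigmaprime_modules_are_equivalent} the algebra $B^\sigma$ carries a right $\kappa_{par}^{\sigma'}G$-module structure, which becomes a right $\kappa_{par}^{\sigma''}G$-module structure through the isomorphism \eqref{e_sigmaprimeisomorphism}. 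By the very definitions of $F_1$ and $F_2$, for any $\Lambda$-bimodule $M$ one then has $F_2F_1(M) = B^\sigma \otimes_{\kappa_{par}^{\sigma''}G}(A\otimes_{A^e}M)$.

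Next I would apply the natural isomorphism $\gamma$ of Proposition~\ref{p_inXB} with its first variable frozen at $X=B^\sigma$. This yields an isomorphism
\[
  \gamma_{(B^\sigma,M)}\colon B^\sigma \otimes_{\kappa_{par}^{\sigma''}G}(A\otimes_{A^e}M) \xrightarrow{\ \sim\ } (B^\sigma \otimes_{B^{\sigma''}}\Lambda)\otimes_{\Lambda^e}M,
\]
which is natural in the remaining variable $M$, since Proposition~\ref{p_inXB} already establishes naturality in both arguments simultaneously.

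It then remains to identify the target with $F(M)=\Lambda\otimes_{\Lambda^e}M$. For this I would invoke Lemma~\ref{l_AsmashHisoBAsmashH}, which furnishes an isomorphism of $\Lambda$-bimodules $\phi\colon \Lambda \to B^\sigma \otimes_{B^{\sigma''}}\Lambda$. Applying the functor $-\otimes_{\Lambda^e}M$ to $\phi$ produces an isomorphism
\[
  \phi \otimes_{\Lambda^e}\mathrm{id}_M\colon \Lambda\otimes_{\Lambda^e}M \xrightarrow{\ \sim\ } (B^\sigma \otimes_{B^{\sigma''}}\Lambda)\otimes_{\Lambda^e}M,
\]
natural in $M$ because $\phi$ is a fixed bimodule map and $\otimes_{\Lambda^e}$ is a bifunctor. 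Composing, the natural isomorphism
\[
  (\phi\otimes_{\Lambda^e}\mathrm{id}_M)^{-1}\circ \gamma_{(B^\sigma,M)}\colon F_2F_1(M)\xrightarrow{\ \sim\ } F(M)
\]
exhibits $F_2F_1 \cong F$ as functors $\Lambda^e\textbf{-Mod}\to \kappa\textbf{-Mod}$.

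Since all the substantive work has already been carried out in Proposition~\ref{p_inXB} and Lemma~\ref{l_AsmashHisoBAsmashH}, there is no genuine obstacle remaining; the only point needing care is the bookkeeping of naturality, namely that freezing one variable of the bifunctor isomorphism $\gamma$ preserves naturality in the other, and that applying $-\otimes_{\Lambda^e}M$ to the single bimodule map $\phi$ is natural in $M$. Both facts are formal consequences of the bifunctoriality of the tensor products involved, so the corollary follows at once.
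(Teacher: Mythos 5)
Your proposal is correct and follows essentially the same route as the paper's proof: specialize Proposition~\ref{p_inXB} at $X=B^{\sigma}$ to identify $F_2F_1$ with $(B^{\sigma}\otimes_{B^{\sigma''}}\Lambda)\otimes_{\Lambda^e}-$, then use the $\Lambda$-bimodule isomorphism $\Lambda\cong B^{\sigma}\otimes_{B^{\sigma''}}\Lambda$ of Lemma~\ref{l_AsmashHisoBAsmashH} to conclude $F\cong(B^{\sigma}\otimes_{B^{\sigma''}}\Lambda)\otimes_{\Lambda^e}-$. Your extra bookkeeping about naturality in the frozen-variable argument is a harmless elaboration of what the paper treats as immediate.
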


\begin{proof}
    Using Proposition \ref{p_inXB} for the particular case $X=B^{\sigma}$ we obtain that the functors
    \[
        B^{\sigma}\otimes_{\kappa_{par}^{\sigma''}G} (A \otimes_{A^e}-): \Lambda^e\textbf{-Mod}  \to \kappa\textbf{-Mod}
    \]
    and
    \[
        (B^{\sigma} \otimes_{B^{\sigma''}} \Lambda) \otimes_{\Lambda^e}-:\Lambda^e\textbf{-Mod}  \to \kappa\textbf{-Mod}
    \] 
    are naturally isomorphic. Clearly, $F_2F_1 = B^{\sigma''}\otimes_{\kappa_{par}^{\sigma''}G} (A \otimes_{A^e}-)$. On the other hand, by Lemma \ref{l_AsmashHisoBAsmashH} we know that $B^{\sigma} \otimes_{B^{\sigma''}} \Lambda \cong \Lambda$ as $\Lambda$-bimodules,  
    which implies the natural isomorphism  $F \cong (B^{\sigma} \otimes_{B^{\sigma''}} \Lambda) \otimes_{\Lambda^e}- ,$ and  the desired conclusion follows.
\end{proof}

Since our objective is to construct a homological Grothendieck spectral sequence (see for example \cite[Theorem 10.48]{RotmanAnInToHoAl}) we need to show that the functor $F_1$ sends projective $\Lambda^e$-modules to left $F_2$-acyclic $\kappa_{par}^{\sigma''}G$-modules. For this purpose, we need the following results:

\begin{lemma} \label{l_fgIdmComIsVNR}
    Any commutative algebra finitely generated by idempotents is Von Neumann regular.
\end{lemma}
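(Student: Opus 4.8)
The plan is to show that $B$, a commutative $\kappa$-algebra generated by finitely many idempotents $e_1,\dots,e_n$, is in fact a finite product of copies of the field $\kappa$; von Neumann regularity is then immediate. The device is to replace the generators by a complete system of orthogonal idempotents, the \emph{minterms}. For each subset $T\subseteq\{1,\dots,n\}$ I set
\[
 f_T=\prod_{i\in T}e_i\ \prod_{i\notin T}(1-e_i)\in B,
\]
which lies in $B$ because $1\in B$ and $B$ is closed under products and differences.

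First I would record the elementary properties of the $f_T$. Since the $e_i$ commute and are idempotent, each factor $e_i$ and $1-e_i$ is idempotent, so $f_T^2=f_T$. If $T\neq T'$ there is an index $i$ lying in exactly one of them, whence $f_Tf_{T'}$ contains the factor $e_i(1-e_i)=0$; thus the $f_T$ are pairwise orthogonal. Expanding the identity $1=\prod_{i=1}^n\bigl(e_i+(1-e_i)\bigr)$ gives $\sum_T f_T=1$, and multiplying this by $e_i$ yields $e_i=\sum_{T\ni i}f_T$, because $e_if_T=f_T$ when $i\in T$ and $e_if_T=0$ otherwise.

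Next I would argue that the $\kappa$-span $V=\sum_T\kappa f_T$ is all of $B$. By orthogonality $V$ is closed under multiplication, and it contains $1$ together with every generator $e_i$ (via $e_i=\sum_{T\ni i}f_T$); hence $V$ is a subalgebra containing the generators, so $V\supseteq B$, while $V\subseteq B$ is trivial. Therefore $B=\sum_T\kappa f_T$, and discarding the possibly vanishing minterms we see that $B$ is spanned by a finite family of pairwise orthogonal idempotents summing to $1$, i.e. $B\cong\kappa^{m}$ with $m=\#\{T:f_T\neq0\}$.

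Finally I would deduce regularity directly. Given $a\in B$, write $a=\sum_T\alpha_T f_T$ with $\alpha_T\in\kappa$ and put $x=\sum_{\alpha_T\neq0}\alpha_T^{-1}f_T\in B$. Using $f_Tf_{T'}=\delta_{T,T'}f_T$ one computes $a^2=\sum_T\alpha_T^2f_T$ and then $a^2x=\sum_{\alpha_T\neq0}\alpha_Tf_T=a$; since $B$ is commutative, $axa=a^2x=a$. Hence every element admits a quasi-inverse and $B$ is von Neumann regular. The only step requiring genuine care is the spanning claim, which rests on the expansion of $\prod_i(e_i+(1-e_i))$; once the $f_T$ are known to span, the rest is a routine orthogonality computation. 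Note that both hypotheses are essential: $\mathbb{Z}$ is generated by the single idempotent $1$ yet is not von Neumann regular, so the ground object must be an algebra over a field, and infinitely many idempotents would break the finite expansion above.
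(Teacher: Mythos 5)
Your proof is correct, but it follows a genuinely more elementary route than the paper's. The paper passes to the meet-semilattice $E$ of \emph{all} idempotents of the algebra, identifies the semigroup algebra $\kappa E$ with the M\"obius algebra of $E$, invokes Solomon's theorem to produce an orthogonal basis of idempotents $\{v_i\}$ with $\sum_i v_i = 1_{\kappa E}$, pushes this basis forward along the surjection $\kappa E \twoheadrightarrow \mathcal{A}$, and then concludes regularity by citing a lemma of Goodearl. You instead build the complete orthogonal system by hand, as the minterms $f_T=\prod_{i\in T}e_i\prod_{i\notin T}(1-e_i)$ of the finitely many generators, and exhibit quasi-inverses explicitly; in the end both arguments land on the same decomposition (the primitive idempotents of $B\cong\kappa^m$), but yours is self-contained, needs neither citation, and gives the sharper structural conclusion $B\cong\kappa^m$ --- which in particular re-proves the ``orthogonal basis of idempotents'' fact that the paper reuses in the proof of Corollary~\ref{c_FlatSR2}. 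Two caveats are worth recording. First, your minterms involve $1-e_i$, so you presuppose that the unit lies in the algebra generated by the idempotents; this matches the paper's standing convention that algebras are unital, but the M\"obius-algebra argument does not need it (it recovers the unit as $\varphi(1_{\kappa E})$), which matters slightly because the lemma is later applied to subalgebras generated by sets of idempotents (Proposition~\ref{p_ComIdAlgIsVNR}, Corollary~\ref{c_FlatSR2}); your construction adapts to that setting by replacing $1$ with the internal unit $\sum_i e_i-\sum_{i<j}e_ie_j+\cdots\pm e_1\cdots e_n$ given by inclusion--exclusion and forming minterms relative to it. Second, your closing remark overstates the role of finiteness: the field hypothesis is indeed essential (your $\mathbb{Z}$ example is apt), but finiteness is essential only for your expansion, not for the truth of the statement --- the paper's Proposition~\ref{p_ComIdAlgIsVNR} removes it precisely by reducing any single element to a finitely generated idempotent subalgebra, where your argument applies verbatim.
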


\begin{proof}
    Let $\mathcal{A}$ be a commutative algebra finitely generated by idempotents. Denote by  $E$  the semigroup of the idempotent elements of $\mathcal{A}$ and by $\kappa E$ as the semigroup algebra of $E$ over $\kappa .$ Then the map $\varphi: \kappa E \to A$ given by $\varphi(e):= e$, for all $e \in E$, is a surjective morphism of algebras. Obviously, $E$ is a meet semilattice, and hence $\kappa E$ is the Möbius algebra of $E$ as defined in  \cite{SOLOMON1967603}. By \cite[Theorem 1]{SOLOMON1967603} there exists an orthogonal basis of idempotents $\{v_i\}_{i=0}^n$ of $\kappa E$, such that $\sum_{i=0}^n v_i = 1_{\kappa E}$. Therefore, $\{\varphi(v_i)\}_{i=0}^n$ is a generator set of $\mathcal{A}$ of orthogonal idempotents and, consequently, $V_{\mathcal{A}} = \{\varphi(v_i)\}_{i=0}^n \setminus \{ 0 \}$ is a basis of orthogonal idempotents of $\mathcal{A}$ such that $\sum_{x \in V_{\mathcal{A}}} x = 1_{\mathcal{A}}$. Thus, by \cite[Lemma 1.6]{goodearl1979neumann} we conclude that $\mathcal{A}$ is Von Neumann regular.
\end{proof}

\begin{proposition} \label{p_ComIdAlgIsVNR}
    Any commutative algebra generated by idempotents is Von Neumann regular.
\end{proposition}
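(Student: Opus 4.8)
The plan is to reduce the general statement to the finitely generated case already established in Lemma~\ref{l_fgIdmComIsVNR}, exploiting the fact that von Neumann regularity is an element-wise condition: an algebra is von Neumann regular precisely when every one of its elements $a$ admits some $x$ with $a = axa$, and such a witness $x$ may be taken inside any subalgebra containing $a$. So it suffices to show that each element of a commutative algebra $\mathcal{A}$ generated by a set $S$ of idempotents sits inside a subalgebra to which Lemma~\ref{l_fgIdmComIsVNR} applies.

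First I would fix $a \in \mathcal{A}$. Since $\mathcal{A}$ is generated by $S$, the element $a$ is a $\kappa$-linear combination of finitely many products of elements of $S$; letting $s_1, \dots, s_n \in S$ be the (finitely many) idempotents occurring, I set $\mathcal{B}$ to be the subalgebra of $\mathcal{A}$ generated by $s_1, \dots, s_n$, so that $a \in \mathcal{B}$. By construction $\mathcal{B}$ is commutative and finitely generated by idempotents. Next I would check that $\mathcal{B}$ is unital: because $\mathcal{A}$ is unital, the element $u = 1_{\mathcal{A}} - \prod_{i=1}^{n}(1_{\mathcal{A}} - s_i)$ expands, after cancelling the constant term, into a $\kappa$-linear combination of nonempty products of the $s_i$, hence lies in $\mathcal{B}$; and since $(1_{\mathcal{A}} - s_j)s_j = 0$ forces $u s_j = s_j$ for each $j$, the idempotent $u$ is a two-sided identity for $\mathcal{B}$. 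Thus $\mathcal{B}$ is a unital commutative algebra finitely generated by the idempotents $s_1, \dots, s_n$.

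With $\mathcal{B}$ in hand, Lemma~\ref{l_fgIdmComIsVNR} yields that $\mathcal{B}$ is von Neumann regular, so there is $x \in \mathcal{B}$ with $a = axa$; viewing $x$ as an element of $\mathcal{A}$ shows that $a$ is regular in $\mathcal{A}$. As $a$ was arbitrary, $\mathcal{A}$ is von Neumann regular. I expect the only genuine point requiring care to be the verification that the finitely generated subalgebra $\mathcal{B}$ honestly satisfies the hypotheses of Lemma~\ref{l_fgIdmComIsVNR} — in particular that it is unital and generated by idempotents — since the reduction itself is immediate once this is settled; everything else is the standard localization of the regularity condition to a single element.
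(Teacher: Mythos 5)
Your proof is correct and follows essentially the same route as the paper's: both localize the element-wise regularity condition to a finitely generated subalgebra containing the given element and then invoke Lemma~\ref{l_fgIdmComIsVNR}. The only (harmless) difference is that you generate the subalgebra by the underlying generating idempotents $s_1,\dots,s_n$ and explicitly verify that it is unital with unit $1_{\mathcal{A}}-\prod_{i}(1_{\mathcal{A}}-s_i)$, whereas the paper generates it by the idempotents $e_i$ occurring in the expansion $x=\sum_i \alpha_i e_i$ (products of commuting idempotents being idempotent) and leaves the unitality of that subalgebra implicit.
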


\begin{proof}
    Let $\mathcal{A}$ be a commutative algebra generated by idempotents. For any $x \in \mathcal{A}$ we have that
    \[
      x = \sum_{i = 0}^n \alpha_i e_i,  
    \]
    where the $e_i$ are idempotents and $\alpha_i \in \kappa$. Let $\mathcal{E}$ be the subalgebra of $\mathcal{A}$ generated by $\{e_i\}_{i=0}^n$. Then, by Lemma \ref{l_fgIdmComIsVNR} we have that $\mathcal{E}$ is Von Neumann regular, and thus there exist $y \in \mathcal{E} \subseteq \mathcal{A}$ such that $xyx = x$. Henceforth, $\mathcal{A}$ is Von Neumann regular. 
\end{proof}

Since all modules over a Von Neumann regular ring are flat   \cite[Corollary 1.13]{goodearl1979neumann}, a direct  consequence of Proposition~\ref{p_ComIdAlgIsVNR} is the following:

\begin{corollary} \label{l_Bflat}
    The algebra $B^{\sigma''}$ is Von Neumann regular. Consequently, $- \otimes_{B^{\sigma''}} \Lambda$ is an exact functor.
\end{corollary}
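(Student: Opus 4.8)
The plan is to read off both assertions from results already established in this section. For the first one, I would begin by recalling that $B^{\sigma''}$ is, by construction, the $\kappa$-subalgebra of $\kappa_{par}^{\sigma''}G$ generated by the idempotents $e_g^{\sigma''}$ $(g \in G)$ obtained from the canonical partial $\sigma''$-representation $[\cdot]^{\sigma''}$ via \eqref{idempotent e}. By \eqref{Commuting e} these elements are idempotent and commute pairwise, so $B^{\sigma''}$ is a commutative algebra generated by idempotents. Proposition~\ref{p_ComIdAlgIsVNR} then applies verbatim and gives that $B^{\sigma''}$ is Von Neumann regular.

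For the second assertion I would invoke the cited fact \cite[Corollary 1.13]{goodearl1979neumann} that every module over a Von Neumann regular ring is flat. Applying this to $\Lambda$, viewed as a left $B^{\sigma''}$-module through the homomorphism $\iota$ of \eqref{e_iota} with the action \eqref{e_B_module_structure_of_AskewG}, shows that $\Lambda$ is flat as a left $B^{\sigma''}$-module. Flatness of $\Lambda$ on the left is exactly the condition ensuring that $- \otimes_{B^{\sigma''}} \Lambda$ is exact; and since restriction of scalars from right $\kappa_{par}^{\sigma''}G$-modules to right $B^{\sigma''}$-modules preserves short exact sequences, the functor on $\textbf{Mod-}\kappa_{par}^{\sigma''}G$ constructed in Proposition~\ref{p_ArtG-Mod-XtensorArtG} is exact as well.

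I do not expect a genuine obstacle: the corollary is a purely formal consequence of Proposition~\ref{p_ComIdAlgIsVNR} together with the flatness criterion over Von Neumann regular rings. The only point deserving a moment's attention is the bookkeeping of sides — one must note that the exactness of $- \otimes_{B^{\sigma''}} \Lambda$ is controlled by flatness of the fixed factor $\Lambda$ as a \emph{left} module, not by any property of the varying argument — but Von Neumann regularity is a left-right symmetric property and forces all modules to be flat, so this causes no difficulty.
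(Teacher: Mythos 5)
Your proposal is correct and follows exactly the paper's route: the paper derives this corollary as a direct consequence of Proposition~\ref{p_ComIdAlgIsVNR} (since $B^{\sigma''}$ is a commutative algebra generated by the commuting idempotents $e_g^{\sigma''}$) together with the cited fact from \cite[Corollary 1.13]{goodearl1979neumann} that all modules over a Von Neumann regular ring are flat, applied to $\Lambda$ as a left $B^{\sigma''}$-module. Your extra care about which side's flatness controls the exactness of $- \otimes_{B^{\sigma''}} \Lambda$ is sound and consistent with how the paper uses the result.
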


Now we have the necessary tools to show the following theorem.

\begin{theorem} \label{t_SSHHTor}
    There exists a first quadrant homological spectral sequence
    \[
        E^2_{p,q} = \operatorname{Tor}^{\kappa_{par}^{\sigma''}G}_p(B^\sigma,H_q(A,M) ) \Rightarrow H_{p+q}(\Lambda, M). 
    \]
\end{theorem}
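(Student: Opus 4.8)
The plan is to apply the homological Grothendieck spectral sequence \cite[Theorem 10.48]{RotmanAnInToHoAl} to the composable pair of right exact functors $F_1 \colon \Lambda^e\textbf{-Mod} \to \kappa_{par}^{\sigma''}G\textbf{-Mod}$ and $F_2 \colon \kappa_{par}^{\sigma''}G\textbf{-Mod} \to \kappa\textbf{-Mod}$. Its output is a first quadrant spectral sequence $E^2_{p,q} = L_p F_2(L_q F_1(M)) \Rightarrow L_{p+q}(F_2F_1)(M)$, and the three ingredients needed to rewrite the terms are already in place: $L_q F_1(M) \cong H_q(A,M)$ by the discussion following Proposition~\ref{p_DMSA}, $L_p F_2(-) = \operatorname{Tor}^{\kappa_{par}^{\sigma''}G}_p(B^\sigma, -)$ by the definition of the derived functor, and $L_{p+q}(F_2F_1)(M) \cong L_{p+q}F(M) = H_{p+q}(\Lambda, M)$ by Corollary~\ref{c_F2F1F} together with the fact that $L_\bullet F$ computes the Hochschild homology of $\Lambda$. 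All terms vanish for $p<0$ or $q<0$ since $F_1$ and $F_2$ are right exact, so the sequence lies in the first quadrant.

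The single hypothesis of \cite[Theorem 10.48]{RotmanAnInToHoAl} that is not yet verified, and which I expect to be the crux of the argument, is that $F_1$ carries projective $\Lambda^e$-modules to $F_2$-acyclic $\kappa_{par}^{\sigma''}G$-modules; that is, for every projective $\Lambda^e$-module $P$ I must show $\operatorname{Tor}^{\kappa_{par}^{\sigma''}G}_p(B^\sigma, F_1(P)) = 0$ for all $p>0$. To compute this $\operatorname{Tor}$ I will resolve $B^\sigma$ rather than $F_1(P)$: choosing a projective resolution $Q_\bullet \to B^\sigma$ in $\textbf{Mod-}\kappa_{par}^{\sigma''}G$ gives $\operatorname{Tor}^{\kappa_{par}^{\sigma''}G}_p(B^\sigma, F_1(P)) = H_p\big(Q_\bullet \otimes_{\kappa_{par}^{\sigma''}G} F_1(P)\big)$.

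Applying the natural isomorphism of Proposition~\ref{p_inXB} degreewise (with $X = Q_n$) identifies this complex with $(Q_\bullet \otimes_{B^{\sigma''}} \Lambda) \otimes_{\Lambda^e} P$, and the two remaining facts then close the argument. First, by Corollary~\ref{l_Bflat} the functor $-\otimes_{B^{\sigma''}} \Lambda$ is exact, so $Q_\bullet \otimes_{B^{\sigma''}} \Lambda$ is acyclic in positive degrees and, by Lemma~\ref{l_AsmashHisoBAsmashH}, has homology $B^\sigma \otimes_{B^{\sigma''}} \Lambda \cong \Lambda$ in degree zero; hence it is a resolution of $\Lambda$ by $\Lambda$-bimodules. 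Second, $P$ is projective, hence flat, over $\Lambda^e$, so $-\otimes_{\Lambda^e} P$ is exact and preserves acyclicity in positive degrees. Therefore $(Q_\bullet \otimes_{B^{\sigma''}} \Lambda)\otimes_{\Lambda^e} P$ is exact in positive degrees, giving $\operatorname{Tor}^{\kappa_{par}^{\sigma''}G}_p(B^\sigma, F_1(P)) = 0$ for $p>0$, as required. With the acyclicity hypothesis established, the Grothendieck spectral sequence applies and yields exactly $E^2_{p,q} = \operatorname{Tor}^{\kappa_{par}^{\sigma''}G}_p(B^\sigma, H_q(A,M)) \Rightarrow H_{p+q}(\Lambda, M)$.
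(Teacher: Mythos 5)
Your proposal is correct and follows essentially the same route as the paper: both apply the Grothendieck spectral sequence of \cite[Theorem 10.48]{RotmanAnInToHoAl} to $F_2 \circ F_1 \cong F$ (Corollary~\ref{c_F2F1F}), and both verify the acyclicity hypothesis by using Proposition~\ref{p_inXB} to identify $\operatorname{Tor}^{\kappa_{par}^{\sigma''}G}_n(B^\sigma, F_1(P))$ with the derived functors of $(-\otimes_{B^{\sigma''}}\Lambda)\otimes_{\Lambda^e}P$ evaluated at $B^\sigma$, which vanish in positive degrees because $-\otimes_{B^{\sigma''}}\Lambda$ is exact (Corollary~\ref{l_Bflat}) and $-\otimes_{\Lambda^e}P$ is exact for $P$ projective. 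Your version merely makes explicit the resolution $Q_\bullet \to B^\sigma$ and the degreewise application of the natural isomorphism that the paper invokes at the level of derived functors.
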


\begin{proof}
    Notice that $L_\bullet F_1(-)=H_\bullet(A,-) $, $L_\bullet F_2 (-) = \operatorname{Tor}^{\kappa_{par}^{\sigma''}G}_\bullet (B^\sigma,-)$ and $L_{\bullet}F(-)=H_{\bullet}(\Lambda, -)$. We know that $F_1$ and $F_2$ are right exact functors and by Corollary \ref{c_F2F1F} we have that $F_2 F_1 \cong F$. So, it only remains to verify that for any projective object $P$ of $\Lambda^e$\textbf{-Mod} we have that $F_1(P)$ is left $F_2$-acyclic. That is, we want to see that
    \[
        \operatorname{Tor}^{\kappa_{par}^{\sigma''}G}_n(B^\sigma,F_1(P))=0, \, \forall n>0.
    \]
    Since,
    \begin{align*}
        \operatorname{Tor}^{\kappa_{par}^{\sigma''}G}_n(B^\sigma,F_1(P))
        &=\operatorname{L}_n(- \otimes_{\kappa_{par}^{\sigma''}G} (A \otimes_{A^e} P))(B^\sigma) \\ 
       (\text{by Proposition } \ref{p_inXB}) &\cong  \operatorname{L}_n((- \otimes_{B^{\sigma''}} \Lambda) \otimes_{\Lambda^e}P)(B^\sigma),
    \end{align*}
    by Lemma \ref{l_Bflat} we have that the functor  $- \otimes_B \Lambda$ is exact, and since $P$ is projective, then $-\otimes_{\Lambda^e}P$ is also exact. Therefore, $(- \otimes_B \Lambda) \otimes_{\Lambda^e}P$ is an exact functor, whence we obtain
    \[
        \operatorname{Tor}^{\kappa_{par}^{\sigma''}G}_n(B^\sigma, F_1(P))=0, \, \forall n>0. 
    \]
\end{proof}

Now the objective is to understand the behavior of the functor $\operatorname{Tor}^{\kappa_{par}^{\sigma''}G}_n(B^\sigma, - )$, principally we want to relate $\operatorname{Tor}^{\kappa_{par}^{\sigma''}G}_\bullet(B^\sigma, X)$ and the partial homology groups $H_\bullet^{par}(G, X),$ where $X$ is a left $\kappa_{par}^{\sigma''}G$-module. Recall from 
 \cite{MMAMDDHKPartialHomology} the $n$-th partial homology group of $G$ with coefficients in $X$ is defined by
$$ H_n^{par}(G,X)= \operatorname{Tor}^{\kappa_{par} G}_n(B, X ).$$

  First, we need the following proposition

\begin{proposition} \label{p_FlatSR2}
    Let $W$ be a subset of idempotents of $B$ and $\mathcal{W}$ the two-sided ideal of $\kappa_{par}G$ generated by $W$, then $\kappa_{par}G / \mathcal{W}$ is a flat right (left) $\kappa_{par}G$-module.
\end{proposition}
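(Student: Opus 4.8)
The plan is to exploit the partial skew group algebra structure of $\kappa_{par}G$ together with the Von Neumann regularity of $B$. By Theorem~\ref{prop:kparCrossed} (applied to the trivial factor set, where $\kappa_{par}^{\sigma}G=\kappa_{par}G$ and $B^{\sigma}=B$) we have $\kappa_{par}G\cong B\rtimes_{\theta}G=\bigoplus_{g\in G}D_g\delta_g$ with $D_g=e_gB$. This is a $G$-graded algebra whose degree-$g$ component is $D_g\delta_g$, and since $W\subseteq B=D_1$ consists of homogeneous elements of degree $1$, the two-sided ideal $\mathcal{W}$ it generates is graded. Hence $\mathcal{W}=\bigoplus_g\mathcal{W}_g$ with $\mathcal{W}_g=\{x\in D_g:x\delta_g\in\mathcal{W}\}$.

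First I would identify $\mathcal{W}$. Setting $J:=\mathcal{W}_1=\{b\in B:b\delta_1\in\mathcal{W}\}$, which is an ideal of $B$, I would verify the two identities $x\delta_g=(x\delta_1)(1_g\delta_g)$ and $(x\delta_g)(1_{g^{-1}}\delta_{g^{-1}})=x\delta_1$ for $x\in D_g$; the first shows $(J\cap D_g)\delta_g\subseteq\mathcal{W}$, the second shows $\mathcal{W}_g\subseteq J\cap D_g$, so that $\mathcal{W}=\bigoplus_g(J\cap D_g)\delta_g$. A short conjugation computation $(1_g\delta_g)(j\delta_1)(1_{g^{-1}}\delta_{g^{-1}})=\theta_g(1_{g^{-1}}j)\delta_1$ then shows that $J$ is $G$-invariant, i.e. $\theta_g(J\cap D_{g^{-1}})\subseteq J$ for all $g$. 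Consequently $\mathcal{W}=J\cdot\kappa_{par}G$ (left $B$-multiplication) $=\kappa_{par}G\cdot J$ (right $B$-multiplication), which gives the identifications
\[
\kappa_{par}G/\mathcal{W}\;\cong\;(B/J)\otimes_B\kappa_{par}G\;\cong\;\kappa_{par}G\otimes_B(B/J)
\]
of right, respectively left, $\kappa_{par}G$-modules.

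Next I would deduce flatness by base change. Since $B$ is commutative and generated by the commuting idempotents $e_g$, Proposition~\ref{p_ComIdAlgIsVNR} shows that $B$ is Von Neumann regular, so every $B$-module is flat; in particular $B/J$ is flat over $B$. For any right $B$-module $P$ and any left $\kappa_{par}G$-module $N$ there is a natural isomorphism $(P\otimes_B\kappa_{par}G)\otimes_{\kappa_{par}G}N\cong P\otimes_BN$, where $N$ is restricted to a $B$-module along $b\mapsto b\delta_1$. As the composite of the exact restriction functor with the exact functor $P\otimes_B-$, this exhibits $P\otimes_B\kappa_{par}G$ as flat over $\kappa_{par}G$; taking $P=B/J$ gives flatness of $\kappa_{par}G/\mathcal{W}$ as a right module. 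The symmetric argument applied to $\kappa_{par}G\otimes_B(B/J)$ — or transport along the anti-isomorphism $[g]\mapsto[g^{-1}]$ of $\kappa_{par}G$ — settles the left-module case.

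The main obstacle is the structural identification $\mathcal{W}=J\,\kappa_{par}G=\kappa_{par}G\,J$ with $J$ the $G$-invariant ideal of $B$ generated by $W$: one must use that $\mathcal{W}$ is graded and carefully track how left and right multiplication by the homogeneous units $1_g\delta_g$ moves the degree-$1$ generators through every homogeneous component, in order to pin down $\mathcal{W}_g=J\cap D_g$ and the $G$-invariance of $J$. Once this is established, the conclusion is purely formal: flatness is preserved under extension of scalars along $B\hookrightarrow\kappa_{par}G$, and $B/J$ is flat because $B$ is Von Neumann regular.
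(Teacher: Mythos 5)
Your proof is correct, but it takes a genuinely different route from the paper's. The paper argues element-wise through the flatness criterion of \cite[Proposition 4.14]{Lam1998LecturesOM}: given a left ideal $I$ and $z\in\mathcal{W}\cap I$, it writes $z$ in terms of the generators using the normal form $u[g]$ of elements of $S(G)$, conjugates each generator $w_i$ into the idempotent $[g_i]w_i[g_i^{-1}]\in B\cap\mathcal{W}$, and applies Von Neumann regularity of the \emph{finitely generated} commutative subalgebra these idempotents generate (Lemma~\ref{l_fgIdmComIsVNR}) to produce a single idempotent $w_0\in\mathcal{W}$ with $z=w_0z\in\mathcal{W}\cdot I$, giving $\mathcal{W}\cap I=\mathcal{W}I$. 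You instead argue structurally: from the $G$-grading of $\kappa_{par}G\cong B\rtimes_\theta G$ you show that $\mathcal{W}$ is the ideal induced from the $G$-invariant ideal $J=\mathcal{W}\cap B$ of $B$ (your computations of the graded components, the invariance $\theta_g(1_{g^{-1}}J)\subseteq J$, and the identities $\mathcal{W}=J\kappa_{par}G=\kappa_{par}GJ$ are all correct), so that $\kappa_{par}G/\mathcal{W}\cong(B/J)\otimes_B\kappa_{par}G$, and you then conclude by flat base change along $B\to\kappa_{par}G$, using that all of $B$ is Von Neumann regular (Proposition~\ref{p_ComIdAlgIsVNR}). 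Both proofs rest on regularity of commutative idempotent-generated algebras, but they deploy it differently, and each buys something: your argument nowhere uses that the elements of $W$ are idempotents, so it proves the more general Corollary~\ref{c_FlatSR2} in one stroke (the paper needs a separate reduction there) and gives the sharper structural description of the quotient as an induced module $(B/J)\otimes_B\kappa_{par}G$; the paper's argument avoids the graded-ideal bookkeeping entirely and exhibits the local-unit property $z=w_0z$ of $\mathcal{W}$, which is exactly what Lam's criterion asks for and is of independent use.
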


\begin{proof}
   Recall that each element of the semigroup $S(G)$ can be written in the form $u[g]$ for some $g\in G$ and some idempotent $u \in S(G)$ (see \cite{exe}).   Let $I$ be a left ideal of $\kappa_{par}G$ and $z \in \mathcal{W} \cap I.$ Then there exist $\lambda_i \in \kappa$, $w_i \in W$, \,  $[g_i], \, x_i \in \kappa_{par}G$ and idempotents  $u_i \in S(G)$  such that
    \[
        z= \sum_{i=1}^{n} \lambda_i u_i [g_i]w_ix_i.
    \]
    Recall that $[g_i]w_i[g^{-1}_i] \in  B$ is an idempotent and, obviously, $[g_i]w_i[g^{-1}_i] \in \mathcal{W}$. Then,
    \[
        z= \sum_{i=1}^{n} \lambda_i u_i [g_i]w_i [g_i^{-1}][g_i]x_i =\sum_{i=1}^{n} \lambda_i ([g_i]w_i[g_i^{-1}]) u_i[g_i]x_i.
    \]
    Let $\mathcal{A}$ be the unital $K$-algebra generated by $\{[g_i]w_i[g_i^{-1}] : 1 \leq i \leq n \} \cup \{ 1 \}$, then by Proposition \ref{p_ComIdAlgIsVNR} we have that $\mathcal{A}$ is a Von Neumann regular algebra. Let $\mathcal{J}$ be the ideal of $\mathcal{A}$ generated by $\{[g_i]w_i[g_i^{-1}] : 1 \leq i \leq n \}$. Since $\mathcal{J}$ is an ideal in a commutative algebra generated by finitely many idempotents, $\mathcal{J}$ has a unit element $w_0$.
    Therefore,
    \begin{align*}
        z=\sum_{i=1}^{n} \lambda_i ([g_i]w_i[g_i^{-1}]) u_i[g_i]x_i 
        &=\sum_{i=1}^{n} \lambda_i w_0([g_i]w_i[g_i^{-1}]) u_i[g_i]x_i \\
        &= w_0\sum_{i=1}^{n} \lambda_i ([g_i]w_i[g_i^{-1}]) u_i[g_i]x_i \\
        &=w_0 z. 
    \end{align*}
    Observe that since $\mathcal{A} \subseteq \mathcal{W}$, we have that $w_0 \in \mathcal{W}$. Hence, $z=w_0 z \in \mathcal{W} \cdot I$. Therefore, $\mathcal{W} \cap I \subseteq \mathcal{W} \cdot I$. Clearly, $\mathcal{W} \cdot I \subseteq \mathcal{W} \cap I$. Thus, \cite[Proposition 4.14]{Lam1998LecturesOM} implies that $\kappa_{par}G / \mathcal{W}$ is flat as right $\kappa_{par}G$-module. The proof of its flatness as a left $\kappa_{par}G$-module is symmetric. 
\end{proof}

\begin{corollary} \label{c_FlatSR2}
    Let $W$ be a subset of $B$ and $\mathcal{W}$ the two-sided ideal of $\kappa_{par}G$ generated by $W.$ Then, $\kappa_{par}G / \mathcal{W}$ is a flat right (left) $\kappa_{par}G$-module.
\end{corollary}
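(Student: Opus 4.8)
The plan is to reduce the statement to Proposition~\ref{p_FlatSR2} by replacing the arbitrary subset $W\subseteq B$ with a set of idempotents of $B$ generating the very same two-sided ideal of $\kappa_{par}G$. The crucial input is that $B$ is a commutative algebra generated by the idempotents $e_g=[g][g\m]$, so by Proposition~\ref{p_ComIdAlgIsVNR} it is Von Neumann regular.

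First I would exploit this regularity elementwise. For each $w\in W$ there is $w'\in B$ with $ww'w=w$. Setting $e_w:=ww'\in B$, one checks $e_w^2=ww'ww'=ww'=e_w$, so $e_w$ is an idempotent of $B$; moreover $e_w w=ww'w=w$. Thus each $w\in W$ is recovered from a single idempotent $e_w$ of $B$.

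Next I would compare ideals. Let $\mathcal{W}'$ be the two-sided ideal of $\kappa_{par}G$ generated by $\{e_w:w\in W\}$. Since $w\in W\subseteq\mathcal{W}$ and $w'\in B\subseteq\kappa_{par}G$, each $e_w=ww'$ lies in $\mathcal{W}$; conversely, since $e_w\in\mathcal{W}'$ and $w\in\kappa_{par}G$, each $w=e_w w$ lies in $\mathcal{W}'$. Hence $\mathcal{W}=\mathcal{W}'$. As $\{e_w:w\in W\}$ is a set of idempotents of $B$, Proposition~\ref{p_FlatSR2} applies to $\mathcal{W}'=\mathcal{W}$, yielding that $\kappa_{par}G/\mathcal{W}$ is flat as a right (and, by the symmetric argument, left) $\kappa_{par}G$-module.

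The only real content — and the sole potential obstacle — is the passage from an arbitrary element of $B$ to an idempotent generating the same principal ideal, which is exactly what Von Neumann regularity of $B$ provides; everything else is routine bookkeeping with generated ideals.
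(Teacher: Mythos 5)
Your proof is correct, and it reaches the same reduction as the paper — namely, that $\mathcal{W}$ is generated as a two-sided ideal by a set of idempotents of $B$, after which Proposition~\ref{p_FlatSR2} finishes the job — but the mechanism you use for that reduction is different. The paper takes each $w\in W$, writes it as a $\kappa$-linear combination of idempotents of $B$, passes to the finitely generated subalgebra $\mathcal{A}$ they generate, and invokes the orthogonal idempotent basis $\{v_j\}$ of $\mathcal{A}$ (the Solomon/M\"obius-algebra argument already used in Lemma~\ref{l_fgIdmComIsVNR}); orthogonality gives $wv_j=\beta_j v_j$, hence $v_j=\beta_j^{-1}wv_j\in\mathcal{W}$ whenever $\beta_j\neq 0$, so $\mathcal{W}$ is generated by the resulting $v_j$'s. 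You instead apply Proposition~\ref{p_ComIdAlgIsVNR} as a black box: Von Neumann regularity of $B$ yields, for each $w\in W$, a quasi-inverse $w'$ and a single idempotent $e_w=ww'$ with $e_w w=w$, and the two inclusions $e_w\in\mathcal{W}$, $w\in\mathcal{W}'$ give $\mathcal{W}=\mathcal{W}'$. Your route is shorter and uses the standard commutative-VNR fact that every principal ideal is generated by an idempotent, producing one idempotent per generator rather than a full orthogonal decomposition; the paper's route is more self-contained in that it recycles the explicit construction underlying its own regularity lemma rather than the regularity statement itself. Since Proposition~\ref{p_ComIdAlgIsVNR} is proved earlier in the paper and is exactly what you cite, there is no circularity, and both arguments are equally valid.
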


\begin{proof}
    Let $w \in \mathcal{W}$, then
    \[
        w = \sum_{i=0}^n \alpha_i e_i,   
    \]
    where $\alpha_i \in \kappa$ and $\{e_i\}_{i=0}^n$ is a set of idempotents in $B$. Define $\mathcal{A} \subseteq B$ as the subalgebra generated by $\{e_i\}_{i=0}^n$. Thus, as in the proof of Lemma \ref{l_fgIdmComIsVNR}, there exist a basis of orthogonal idempotents $\{v_j\}_{j=0}^m$ of $\mathcal{A}$. Thus,
    \[
        w = \sum_{j=0}^m \beta_j v_j,   \text{ where } \beta_j \in \kappa.
    \]
    Therefore, $v_j = \beta_j^{-1}w v_j \in \mathcal{W}$, for all $0 \leq j \leq m$. Then, we conclude that any element of $\mathcal{W}$ is generated by a set of idempotents of $\mathcal{W} \cap B$. Thus, by Proposition \ref{p_FlatSR2} we have that $\kappa_{par}G/\mathcal{W}$ is flat as left (right) $\kappa_{par}G$-module.
\end{proof}

We know by the proof of Theorem 10.9 from \cite{E6} that  $B$ is the (universal) algebra generated by the symbols $e_g, (g\in G),$ subject to the relations
$$e_1 = 1_B, \;\;\; e^2_g=e_g, \;\;\; e_g e_h = e_h e_g, \;\;\; \forall g,h \in G.$$   Therefore, we can   define the homomorphism of algebras $\zeta  : B \to B^\sigma$ by setting $\zeta(e_g) = e_g^\sigma$. Now let $J$  the  two-sided ideal of $\kappa_{par} G$ generated by $\ker \zeta$ and define 
\begin{equation}
   \Omega_\sigma := \kappa_{par}G/ J.
\end{equation}
Notice that if $\sigma(g,h)=1$ for all $g,h \in G$ we have that $B=B^\sigma$, thus $\Omega_\sigma = \kappa_{par}G$. Observe that the following fact is a direct consequence of Corollary \ref{c_FlatSR2}.

\begin{lemma}
    $\Omega_\sigma$ is flat as a left (right) $\kappa_{par}G$-module. 
\end{lemma}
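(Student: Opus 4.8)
The plan is to recognize that this lemma follows immediately from Corollary~\ref{c_FlatSR2} once we identify the correct choice of generating set. The key observation is that $\zeta : B \to B^\sigma$ is an algebra homomorphism whose domain is $B$, so that its kernel $\ker\zeta$ is, by definition, a subset of $B$. This places us precisely within the hypothesis of Corollary~\ref{c_FlatSR2}, whose statement concerns two-sided ideals of $\kappa_{par}G$ generated by arbitrary subsets of $B$.

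First I would set $W := \ker\zeta \subseteq B$ and let $\mathcal{W}$ denote the two-sided ideal of $\kappa_{par}G$ generated by $W$. By the very definition of $J$ as the two-sided ideal of $\kappa_{par}G$ generated by $\ker\zeta$, we have $\mathcal{W} = J$, and consequently $\kappa_{par}G/\mathcal{W} = \kappa_{par}G/J = \Omega_\sigma$. Then I would invoke Corollary~\ref{c_FlatSR2} verbatim: it guarantees that for any subset $W$ of $B$ the quotient $\kappa_{par}G/\mathcal{W}$ is flat as a right $\kappa_{par}G$-module, and symmetrically as a left $\kappa_{par}G$-module. Substituting our $W$ yields that $\Omega_\sigma$ is flat on both sides, which is exactly the assertion.

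Since the argument is a direct application of the preceding corollary, there is no substantive obstacle to overcome; the only point requiring attention is the elementary set-theoretic remark that $\ker\zeta$ is contained in $B$, so that the hypothesis of Corollary~\ref{c_FlatSR2} is genuinely satisfied. In other words, all the real work has already been carried out in Proposition~\ref{p_FlatSR2} and Corollary~\ref{c_FlatSR2}, and the lemma merely records the relevant special case.
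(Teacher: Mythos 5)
Your proof is correct and is exactly the paper's argument: the paper introduces $\Omega_\sigma = \kappa_{par}G/J$ with $J$ generated by $\ker\zeta \subseteq B$ and then states the lemma as ``a direct consequence of Corollary~\ref{c_FlatSR2}'', which is precisely your application with $W = \ker\zeta$. Nothing is missing; the observation that $\ker\zeta$ lies in $B$ is the only point to check, and you checked it.
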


Recall that $B^{\sigma}$ is a left (right) $\kappa_{par}G$-module by \eqref{eq:BleftKparMod}, \eqref{e_sigmaprimeisomorphism} and \eqref{eq:kparGepi}. Explicitly, 
\[
    [g] \cdot w = [g]^{\sigma''} \cdot w = \xi(g) [g]^{\sigma'}\cdot w = \xi(g) [g]^\sigma w [g^{-1}]^\sigma, \, \forall g \in G, w \in B^\sigma.
\]
Furthermore, $B^\sigma$ is a left (right) $\Omega_\sigma$-module. Indeed, we only have to verify that $z \cdot w = 0$ for all $z \in \ker \zeta$. First observe that due \eqref{e_BsigmaLeftKparGModuleConsequence} we have for all $w \in B^\sigma$ that
    \[
        e_g \cdot w = e^{\sigma''}_g \cdot w  = e_g^\sigma w.
    \]
Then, $v \cdot w = \zeta(v)w$ for all $v \in B$ and $w \in B^\sigma$, hence $ v \cdot w = 0 \text{ for all } v \in \ker \zeta.$ Consequently, $B^\sigma$ is a left $\Omega_\sigma$-module. The fact that it is a right $\Omega_\sigma$-module is symmetric.

Next, observe that $\zeta$ is also a homomorphism of right $\kappa_{par}G$-modules. Indeed, using \eqref{Commuting e} for the canonical partial projective representation we see that
  \begin{align*}
  \zeta ((e_{h_1} \ldots e_{h_n})\cdot [g])
  &= \zeta ([g\m](e_{h_1} \ldots e_{h_n}) [g]) \\
  &= \zeta (e_{g\m h_1} \ldots e_{g\m h_n} e_{g\m}) \\
  &=e^\sigma_{g\m h_1} \ldots e^\sigma_{g\m h_n} e^\sigma_{g\m} \\
  &\overset{\eqref{e_xisigma}}{=}e^\sigma_{g\m h_1} \ldots e^\sigma_{g\m h_n} \xi(g)[g^{-1}]^\sigma [g]^\sigma \\
  &\overset{\eqref{Commuting e}}{=}\xi(g)[g^{-1}]^\sigma e^\sigma_{h_1} \ldots e^\sigma_{h_n}  [g]^\sigma \\
  &=(e^\sigma_{h_1} \ldots e^\sigma_{ h_n}) \cdot  (\xi(g)[g]^{\sigma'}) \\ 
  &\overset{\eqref{e_sigmaprimeisomorphism}}{=}\zeta (e_{h_1} \ldots e_{h_n}) \cdot  [g]^{\sigma''} \\ 
  &=\zeta (e_{h_1} \ldots e_{h_n}) \cdot  [g],
  \end{align*} 
as desired. It follows that  $\ker \zeta $ is a submodule of the right $\kappa_{par}G$-module $B.$

\begin{lemma}\label{B_Sigma_iso}
    $B \otimes_{\kappa_{par}G} \Omega_\sigma \cong B^\sigma$ as right $\kappa_{par}G$-modules.
\end{lemma}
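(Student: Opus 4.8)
The plan is to identify $B \otimes_{\kappa_{par}G} \Omega_\sigma$ with a quotient of $B$ and then recognize that quotient as $B^\sigma$ via $\zeta$. Since $\Omega_\sigma = \kappa_{par}G/J$ is a $\kappa_{par}G$-bimodule, the standard right-exactness computation applied to the short exact sequence $J \to \kappa_{par}G \to \Omega_\sigma$ yields a natural isomorphism of right $\kappa_{par}G$-modules
\[
    B \otimes_{\kappa_{par}G} \Omega_\sigma \;\cong\; B/(B\cdot J),
\]
where $B\cdot J$ is the $\kappa$-subspace of $B$ spanned by the elements $b\cdot x$ with $b\in B$, $x\in J$, and $\cdot$ is the right $\kappa_{par}G$-action of \eqref{eq:BrightKparMod} for trivial $\sigma$, namely $b\cdot[g]=[g^{-1}]b[g]$. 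Everything therefore reduces to proving that $B\cdot J=\ker\zeta$.

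The crux is the observation that the right action of an element of $B$, viewed inside $\kappa_{par}G$, is simply multiplication in $B$. Indeed, for a generator $e_g=[g][g^{-1}]$ and $b\in B$ one computes
\[
    b\cdot e_g=(b\cdot[g])\cdot[g^{-1}]=[g]\big([g^{-1}]b[g]\big)[g^{-1}]=e_g\,b\,e_g=e_g b,
\]
using \eqref{Commuting e} and the commutativity of $B$. A straightforward induction on the length of a product $e_{h_1}\cdots e_{h_n}$, together with the module axiom $b\cdot(z_1z_2)=(b\cdot z_1)\cdot z_2$, then gives $b\cdot z=zb$ for every $z\in B$. This is the only delicate point; once it is available, both inclusions are formal.

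For $\ker\zeta\subseteq B\cdot J$, note that any $z\in\ker\zeta$ lies in $J$, so $z=1_B\cdot z\in B\cdot J$. For $B\cdot J\subseteq\ker\zeta$, write a typical $x\in J$ as a finite sum of terms $r\,z\,s$ with $r,s\in\kappa_{par}G$ and $z\in\ker\zeta$; by associativity of the action, $b\cdot(rzs)=\big((b\cdot r)\cdot z\big)\cdot s$. Putting $b'=b\cdot r\in B$, the middle factor gives $b'\cdot z=b'z\in\ker\zeta$ because $\ker\zeta$ is an ideal of the algebra $B$ (being the kernel of the algebra homomorphism $\zeta$), and the last step stays inside $\ker\zeta$ because $\ker\zeta$ is a right $\kappa_{par}G$-submodule of $B$, as recorded just before the statement. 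Hence $B\cdot J=\ker\zeta$.

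Finally, since $\zeta\colon B\to B^\sigma$ is surjective and is simultaneously a homomorphism of algebras and of right $\kappa_{par}G$-modules with kernel $\ker\zeta$, it induces an isomorphism $B/\ker\zeta\cong B^\sigma$ of right $\kappa_{par}G$-modules. Combining this with the first isomorphism and the identity $B\cdot J=\ker\zeta$ gives $B \otimes_{\kappa_{par}G} \Omega_\sigma\cong B^\sigma$ as right $\kappa_{par}G$-modules, completing the argument.
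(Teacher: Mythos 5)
Your proof is correct, and it follows a genuinely different route from the paper's. The paper tensors the short exact sequence $0 \to \ker\zeta \to B \to B^\sigma \to 0$ of right $\kappa_{par}G$-modules with $\Omega_\sigma$, using flatness of $\Omega_\sigma$ (Corollary \ref{c_FlatSR2}) to keep the sequence exact, and then kills the corner term via Von Neumann regularity of $B$ (Proposition \ref{p_ComIdAlgIsVNR}): $(\ker\zeta)^2 = \ker\zeta$ forces $\ker\zeta\otimes_{\kappa_{par}G}\Omega_\sigma = 0$, whence $B\otimes_{\kappa_{par}G}\Omega_\sigma \cong B^\sigma\otimes_{\kappa_{par}G}\Omega_\sigma \cong B^\sigma$. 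You instead use only right-exactness of the tensor product to write $B\otimes_{\kappa_{par}G}\Omega_\sigma \cong B/(B\cdot J)$, and then identify $B\cdot J = \ker\zeta$ by hand, the pivot being your computation that the right action of an element $z \in B \subseteq \kappa_{par}G$ on $B$ is plain multiplication by $z$. Your route is more elementary and self-contained: it needs neither the flatness machinery (Proposition \ref{p_FlatSR2}, Corollary \ref{c_FlatSR2}) nor Von Neumann regularity, and it makes explicit a fact that the paper uses tacitly --- the step from $(\ker\zeta)^2=\ker\zeta$ to $\ker\zeta\otimes_{\kappa_{par}G}\Omega_\sigma=0$ requires precisely that the module action of $z_i' \in \ker\zeta$ on $z_i \in \ker\zeta$ agrees with multiplication in $B$. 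The paper's route, in turn, is shorter given that flatness of $\Omega_\sigma$ and regularity of $B$ are developed anyway for the surrounding results (e.g.\ Proposition \ref{prop:TorSigma'Hkpar}), and it records the vanishing of $\ker\zeta\otimes_{\kappa_{par}G}\Omega_\sigma$ as a standalone fact. Both arguments ultimately rest on the compatibility established just before the lemma, namely that $\zeta$ is a morphism of right $\kappa_{par}G$-modules (so that $\ker\zeta$ is a right submodule), and your appeals to that fact are legitimate.
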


\begin{proof}
    We have the following exact sequence of right $\kappa_{par}G$-modules
    \[
        0 \to \ker \zeta \to B \to B^\sigma \to 0,
    \]
    and, since $\Omega_\sigma$ is flat, then the following sequence is exact
    \[
        0 \to \ker \zeta \otimes_{\kappa_{par}G} \Omega_\sigma \to B \otimes_{\kappa_{par}G} \Omega_\sigma \to B^\sigma \otimes_{\kappa_{par}G} \Omega_\sigma \to 0.
    \]
    Since $B$ is Von Neumann regular, then $(\ker \zeta)^2 = \ker \zeta$, thus $\ker \zeta \otimes_{\kappa_{par}G} \Omega_\sigma = 0$. Hence,
    \[
        B \otimes_{\kappa_{par}G} \Omega_\sigma \cong B^\sigma \otimes_{\kappa_{par}G} \Omega_\sigma \cong B^\sigma \otimes_{\Omega_\sigma} \Omega_\sigma \cong B^\sigma.
    \]
\end{proof}

\begin{lemma} \label{l_OmegaSigmaIsKparSigmaPrimeModule}
        $\Omega_\sigma$ is a left (right) $\kappa_{par}^{\sigma''}G$-module, via
        \[
            [g]^{\sigma''} \cdot x = [g] \cdot x = \overline{[g]} x \text{ and } x \cdot [g]^{\sigma''} = x \cdot [g] =x\overline{[g]}.
        \]
\end{lemma}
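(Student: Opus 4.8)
The plan is to invoke the universal property of $\kappa_{par}^{\sigma''}G$. Observe first that $\Omega_\sigma=\kappa_{par}G/J$ carries a left $\kappa_{par}G$-module structure by left multiplication, since $J$ is a two-sided ideal; write $\overline{x}$ for the class of $x\in\kappa_{par}G$. For $g\in G$ let $L_g\in\operatorname{End}_\kappa(\Omega_\sigma)$ be left multiplication by $\overline{[g]}$. I would show that $g\mapsto L_g$ is a partial $\sigma''$-representation of $G$ in $\operatorname{End}_\kappa(\Omega_\sigma)$; Proposition~\ref{prop:1to1corresp} then yields an algebra homomorphism $\kappa_{par}^{\sigma''}G\to\operatorname{End}_\kappa(\Omega_\sigma)$, $[g]^{\sigma''}\mapsto L_g$, which is exactly the asserted left action $[g]^{\sigma''}\cdot x=\overline{[g]}\,x$. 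This amounts to checking that the left $\kappa_{par}G$-action descends along the surjection $\pi\colon\kappa_{par}G\to\kappa_{par}^{\sigma''}G$ of \eqref{eq:kparGepi}, i.e.\ that $\ker\pi\subseteq J$; since $J$ is two-sided and $\Omega_\sigma$ is generated by $\overline{1}$, the same inclusion makes the right action descend as well, so the right module structure will come for free.

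Because $g\mapsto[g]$ is a partial representation of $G$ in $\kappa_{par}G$ and $\Omega_\sigma$ is a left $\kappa_{par}G$-module, the operators $L_g$ automatically satisfy $L_{g^{-1}}L_gL_h=L_{g^{-1}}L_{gh}$, $L_gL_hL_{h^{-1}}=L_{gh}L_{h^{-1}}$ and $L_1=\mathrm{id}$. Consequently axioms \eqref{parproj2}--\eqref{parproj4} for the factor set $\sigma''$ hold trivially when $\sigma''(g,h)=1$, and, after evaluating the relevant operators at $\overline{1}$, every remaining requirement (including \eqref{parproj1}) reduces to the two assertions
\[
    \sigma''(g,h)=0 \ \Longrightarrow\ [g^{-1}][gh]\in J \ \text{ and }\ [gh][h^{-1}]\in J.
\]

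To prove these I would first pin down the zero locus of $\sigma''$. As $\xi$ is nowhere zero, $\sigma''(g,h)=0\Leftrightarrow\sigma'(g,h)=0\Leftrightarrow\sigma(g,h)\sigma(h^{-1},g^{-1})=0$, while \eqref{domain}, \eqref{eq:unities} and the fact that each $\theta_g\colon D_{g^{-1}}\to D_g$ is an isomorphism give $\sigma(g,h)=0\Leftrightarrow 1_g1_{gh}=0\Leftrightarrow 1_{g^{-1}}1_h=0\Leftrightarrow\sigma(g^{-1},gh)=0$, and similarly $\Leftrightarrow\sigma(h^{-1},g^{-1})=0$. Hence $\sigma''(g,h)=0\Leftrightarrow\sigma(g,h)=0\Leftrightarrow\sigma(g^{-1},gh)=0$. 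Using the equivalence $e_a^\sigma e_b^\sigma=0\Leftrightarrow\sigma(a,a^{-1}b)=0$ from the proof of Lemma~\ref{l_good_factor_set}, the hypothesis $\sigma''(g,h)=0$ therefore forces $e_g^\sigma e_{gh}^\sigma=0$ and $e_{g^{-1}}^\sigma e_h^\sigma=0$, so that the idempotents $e_ge_{gh}$ and $e_{g^{-1}}e_h$ lie in $\ker\zeta\subseteq J$. Finally, the partial-representation relations together with \eqref{Commuting e} give the identities $[g^{-1}][gh]=[g^{-1}][g][h]=e_{g^{-1}}[h]=e_{g^{-1}}e_h[h]$ and $[gh][h^{-1}]=[g][h][h^{-1}]=[g]e_h=e_{gh}[g]=e_ge_{gh}[g]$; since $J$ is a right ideal containing $e_{g^{-1}}e_h$ and $e_ge_{gh}$, both elements lie in $J$, completing the argument (the right structure being symmetric, or deducible from $(\sigma'')^*=\sigma''$ via Proposition~\ref{p_left_and_right_partial_modules_equivalence}). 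The main obstacle is precisely this bookkeeping of zero loci: one must be sure that the auxiliary idempotents $e_ge_{gh}$ and $e_{g^{-1}}e_h$ collapse under $\zeta$ exactly when $\sigma''(g,h)=0$, which is what the domain identities for $\theta$ guarantee.
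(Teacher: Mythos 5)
Your proposal is correct and takes essentially the same route as the paper: both arguments invoke the universal property of $\kappa_{par}^{\sigma''}G$, use the fact that $\sigma''$ is $\{0,1\}$-valued and that $\Omega_\sigma$ is a quotient of $\kappa_{par}G$ to reduce everything to the implication $\sigma''(g,h)=0 \Rightarrow [g^{-1}][gh],\,[gh][h^{-1}]\in J$, and prove it by writing these elements as multiples of idempotents lying in $\ker\zeta$ (you use $e_{g^{-1}}e_h$ and $e_ge_{gh}$, the paper uses $e_ge_{gh}$ and $e_{h^{-1}g^{-1}}e_{h^{-1}}$). The only cosmetic differences are that you realize the representation by left-multiplication operators in $\operatorname{End}_\kappa(\Omega_\sigma)$ instead of mapping directly into the algebra $\Omega_\sigma$, and you make explicit the zero-locus bookkeeping via \eqref{domain} and \eqref{eq:unities} that the paper leaves implicit.
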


\begin{proof}
    Define $\Gamma: G \to \Omega_\sigma$, such that $\Gamma_g:= \overline{[g]}$, since $\sigma''$ is a trivial factor set and $\Omega_\sigma$ is a quotient algebra of $\kappa_{par}G$ we only have to verify the first relation of Definition \ref{def:parproj}. Observe that
    \begin{align*}
        \text{ if }\sigma''(g,h)=0 
        &\Rightarrow \sigma'(g,h)=0 \Rightarrow \sigma(g,h)=0 \\ 
        & \Rightarrow e_g^\sigma e_{gh}^\sigma= e_{g^{-1}}^\sigma e_{h^{-1}g^{-1}}^\sigma = 0 
        \Rightarrow e_g e_{gh}, \, e_{h^{-1}g^{-1}}e_{h^{-1}} \in \ker \zeta \\
        &\Rightarrow [g^{-1}][gh] = [g^{-1}]e_g e_{gh} [gh] \in J \text{ and } [gh][h^{-1}] = [gh]e_{h^{-1}g^{-1}}e_{h^{-1}}[h^{-1}]  \in J \\
        &\Rightarrow \Gamma_{g^{-1}}\Gamma_{gh} = \overline{[g^{-1}][gh]} = 0 \text{ and } \Gamma_{gh} \Gamma_{h^{-1}} = \overline{[gh][h^{-1}]}=0.
    \end{align*}
    Therefore, by the universal property of $\kappa_{par}^{\sigma''}G$, there exists a morphism of algebras $\kappa_{par}^{\sigma''}G \to \Omega_\sigma$ such that $[g]^{\sigma''} \mapsto \overline{[g]}$. Furthermore, notice that this morphism is surjective. Finally, the $\kappa_{par}^{\sigma''}G$-module structure of $\Omega_\sigma$ is determined by this morphism.
\end{proof}

Notice that by Lemma \ref{l_OmegaSigmaIsKparSigmaPrimeModule} we have that the $\kappa_{par}G$-module structure of $\Omega_\sigma$ is that induced by the surjective morphism of algebras $\kappa_{par}G \to \kappa_{par}^{\sigma''}G$ and the $\kappa_{par}^{\sigma''}G$-module structure of $\Omega_\sigma$, thus $\Omega_\sigma \otimes_{\kappa_{par}^{\sigma''}G}X \cong \Omega_\sigma \otimes_{\kappa_{par}G}X$ for any $\kappa_{par}^{\sigma''}G$-module $X$.

\begin{proposition}\label{prop:TorSigma'Hkpar}
    Let $X$ be a $\kappa_{par}^{\sigma''}G$-module. Then,
    \[
        \operatorname{Tor}_\bullet^{\kappa_{par}^{\sigma''}G}(B^\sigma, X) \cong H^{par}_\bullet (G, \Omega_\sigma \otimes_{\kappa_{par}G}X). 
    \]
\end{proposition}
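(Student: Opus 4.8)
The plan is to exhibit the functor $B^\sigma \otimes_{\kappa_{par}^{\sigma''}G} -$ as a composite whose inner factor is \emph{exact}, and then to evaluate the relevant $\operatorname{Tor}$ by means of a flat resolution rather than a Grothendieck spectral sequence. Throughout write $R=\kappa_{par}G$ and $S=\kappa_{par}^{\sigma''}G$, and recall that $S$ is a quotient of $R$ via \eqref{eq:kparGepi}, that $\Omega_\sigma$ is a quotient of $R$, and that both $R$-module structures on $\Omega_\sigma$ factor through $S$ by the discussion preceding Lemma~\ref{l_OmegaSigmaIsKparSigmaPrimeModule}.

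First I would record that $\mathcal{G}:=\Omega_\sigma \otimes_S -$ is an exact functor. Indeed, for any left $S$-module $Y$ the observation following Lemma~\ref{l_OmegaSigmaIsKparSigmaPrimeModule} furnishes a natural identification $\Omega_\sigma \otimes_S Y \cong \Omega_\sigma \otimes_R Y$, and the latter functor is the composite of the exact restriction functor $S\textbf{-Mod}\to R\textbf{-Mod}$ with $\Omega_\sigma \otimes_R -$, which is exact because $\Omega_\sigma$ is flat as a right $R$-module. Hence $\mathcal{G}$ is exact, so that $\Omega_\sigma$ is in fact flat also as a right $S$-module. In addition, $\mathcal{G}$ sends projectives to left-$R$-flat modules: for a projective left $S$-module $P$, the module $\Omega_\sigma \otimes_S P$ is a direct summand of a direct sum of copies of $\Omega_\sigma$, which is flat as a left $R$-module.

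Next I would invoke associativity of the tensor product together with Lemma~\ref{B_Sigma_iso}. Viewing $\Omega_\sigma$ as an $(R,S)$-bimodule, for every left $S$-module $Y$ there is a natural isomorphism $B \otimes_R (\Omega_\sigma \otimes_S Y) \cong (B \otimes_R \Omega_\sigma) \otimes_S Y \cong B^\sigma \otimes_S Y$, the second step being Lemma~\ref{B_Sigma_iso}. Although that lemma is stated only for right $R$-modules, the isomorphism is automatically right $S$-linear, since the $R$-actions on both $B \otimes_R \Omega_\sigma$ and $B^\sigma$ factor through the surjection $R \twoheadrightarrow S$, over which $R$-linearity and $S$-linearity coincide. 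This identifies $B^\sigma \otimes_S -$ with the composite $(B \otimes_R -)\circ \mathcal{G}$.

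Finally I would compute. Choosing a projective resolution $P_\bullet \to X$ of left $S$-modules and applying the exact functor $\mathcal{G}$, one obtains a resolution $\mathcal{G}(P_\bullet) \to \Omega_\sigma \otimes_S X$ by left-$R$-flat modules; since flat resolutions compute $\operatorname{Tor}$, it follows that $\operatorname{Tor}^{R}_n(B,\, \Omega_\sigma \otimes_S X) \cong H_n\big(B \otimes_R \mathcal{G}(P_\bullet)\big)$. The natural associativity isomorphism above gives $B \otimes_R \mathcal{G}(P_\bullet) \cong B^\sigma \otimes_S P_\bullet$, whose homology is $\operatorname{Tor}^{S}_n(B^\sigma, X)$. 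Combining this with $\Omega_\sigma \otimes_S X \cong \Omega_\sigma \otimes_{\kappa_{par}G} X$ and the definition $H^{par}_n(G,-)=\operatorname{Tor}^{\kappa_{par}G}_n(B,-)$ yields the claimed isomorphism. The point demanding the most care is the dual role played by the flatness of $\Omega_\sigma$: its right $R$-flatness (through the cited observation) is what forces $\mathcal{G}$ to be exact, while its left $R$-flatness is what makes $\mathcal{G}(P_\bullet)$ an admissible resolution for computing $\operatorname{Tor}^{R}(B,-)$; the only other delicate step is upgrading the isomorphism of Lemma~\ref{B_Sigma_iso} from right $R$-linearity to right $S$-linearity.
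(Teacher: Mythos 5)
Your argument is correct, and it reaches the result through the same skeleton as the paper's proof: both replace $B^\sigma$ by $B \otimes_{\kappa_{par}G} \Omega_\sigma$ via Lemma~\ref{B_Sigma_iso}, both rely on the two-sided flatness of $\Omega_\sigma$, and both finish by identifying $\Omega_\sigma \otimes_{\kappa_{par}^{\sigma''}G} X$ with $\Omega_\sigma \otimes_{\kappa_{par}G} X$. The genuine difference is the middle step: the paper obtains
$\operatorname{Tor}_\bullet^{\kappa_{par}^{\sigma''}G}(B \otimes_{\kappa_{par}G} \Omega_\sigma, X) \cong \operatorname{Tor}_\bullet^{\kappa_{par}G}(B, \Omega_\sigma \otimes_{\kappa_{par}^{\sigma''}G} X)$
by citing the change-of-rings result \cite[Corollary 10.61]{RotmanAnInToHoAl} as a black box, whereas you prove exactly this isomorphism by hand: the exact functor $\Omega_\sigma \otimes_{\kappa_{par}^{\sigma''}G} -$ carries a projective resolution $P_\bullet \to X$ to a left-$\kappa_{par}G$-flat resolution of $\Omega_\sigma \otimes_{\kappa_{par}^{\sigma''}G} X$, and flat resolutions compute $\operatorname{Tor}$; your closing remark that right flatness of $\Omega_\sigma$ forces exactness of this functor while left flatness makes the image an admissible resolution is precisely the content of the two-sided flatness hypothesis in the cited corollary. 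What your route buys is self-containedness, together with two details the paper leaves implicit and which you handle correctly: first, the justification that $\Omega_\sigma$ is flat over $\kappa_{par}^{\sigma''}G$ (the paper's phrase ``since \eqref{eq:kparGepi} is surjective'' is exactly your identification $\Omega_\sigma \otimes_{\kappa_{par}^{\sigma''}G} Y \cong \Omega_\sigma \otimes_{\kappa_{par}G} Y$ plus testing exactness after restriction along the surjection); second, the upgrade of the isomorphism of Lemma~\ref{B_Sigma_iso} from right $\kappa_{par}G$-linearity to right $\kappa_{par}^{\sigma''}G$-linearity, which is genuinely needed before tensoring against left $\kappa_{par}^{\sigma''}G$-modules and which holds because both module structures factor through the surjection \eqref{eq:kparGepi}. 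What the paper's route buys is brevity.
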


\begin{proof}
    We know that $\Omega_\sigma$ is flat on either side over $\kappa_{par}G$. Since \eqref{eq:kparGepi} is a surjective map of algebras, then $\Omega_\sigma$ is flat as $\kappa_{par}^{\sigma''}G$-module on either side. Thus, by Lemma~\ref{B_Sigma_iso} and \cite[Corollary 10.61]{RotmanAnInToHoAl} we have that
    \begin{align*}
        \operatorname{Tor}_\bullet^{\kappa_{par}^{\sigma''}G}(B^\sigma, X) 
        &\cong \operatorname{Tor}_\bullet^{\kappa_{par}^{\sigma''}G}(B \otimes_{\kappa_{par}G} \Omega_\sigma, X) \\
        & \cong  \operatorname{Tor}_\bullet^{\kappa_{par}G}(B, \Omega_\sigma \otimes_{\kappa_{par}^{\sigma''}G} X) \\ 
        & \cong  \operatorname{Tor}_\bullet^{\kappa_{par}G}(B, \Omega_\sigma \otimes_{\kappa_{par}G} X) \\ 
        &= H^{par}_\bullet (G, \Omega_\sigma \otimes_{\kappa_{par}G} X).
    \end{align*} 
\end{proof}

We proceed with the following lemma:

\begin{lemma}  \label{l_AoMisOmegaModule}
  The epimorphism \eqref{eq:kparGepi} induces a left  $\Omega_\sigma$-module structure on $A \otimes_{A^e}M.$
\end{lemma}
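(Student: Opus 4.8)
The plan is to show that the two-sided ideal $J$ annihilates $A\otimes_{A^e}M$, so that the $\kappa_{par}G$-module structure obtained by composing the epimorphism \eqref{eq:kparGepi} with the action of Proposition~\ref{p_AeMAction} descends to the quotient $\Omega_\sigma=\kappa_{par}G/J$. Since $A\otimes_{A^e}M$ is a \emph{left} module and $J$ is the two-sided ideal generated by $\ker\zeta$, it suffices to prove that $z\cdot(a\otimes_{A^e}m)=0$ for every $z\in\ker\zeta$ and every $a\otimes_{A^e}m$. Indeed, for $u,v\in\kappa_{par}G$ one has $(uzv)\cdot(a\otimes_{A^e}m)=u\cdot\big(z\cdot(v\cdot(a\otimes_{A^e}m))\big)$, and $v\cdot(a\otimes_{A^e}m)$ again lies in $A\otimes_{A^e}M$, so the inner term vanishes once we know $\ker\zeta$ kills the whole module; summing over generators then yields $J\cdot(A\otimes_{A^e}M)=0$.

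To evaluate the action of $z\in\ker\zeta\subseteq B$, I would use Lemma~\ref{l_teneq}. By part (iv), in its form valid for all $w\in B$, we have $z\cdot(a\otimes_{A^e}m)=(z\cdot a)\otimes_{A^e}m$, while part (i) gives $e_g\cdot a=1_g a$. Since $B$ is the universal commutative algebra on the commuting idempotents $e_g$ (with $e_1=1_B$), and the elements $1_g\in A$ satisfy the same relations, the $B$-action on $A$ is given by an algebra homomorphism $\chi\colon B\to A$ determined by $\chi(e_g)=1_g$; that is, $z\cdot a=\chi(z)\,a$. Consequently $z\cdot(a\otimes_{A^e}m)=\chi(z)a\otimes_{A^e}m$, and the whole problem reduces to showing $\ker\zeta\subseteq\ker\chi$.

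For this I would factor $\chi$ through $\zeta$. The homomorphism $\tilde{\Gamma}_\sigma\colon\kappa_{par}^{\sigma}G\to\Lambda$ of Corollary~\ref{cor:hom} sends $[g]^{\sigma}\mapsto 1_g\delta_g$; using $e_g^{\sigma}=\xi(g)[g]^{\sigma}[g^{-1}]^{\sigma}$ from \eqref{e_xisigma} together with $\xi(g)=\sigma(g,g^{-1})^{-1}$ from \eqref{e_xiformula}, a short computation (multiplying in $\Lambda$ and using $\varsigma_{g,g^{-1}}=\sigma(g,g^{-1})1_g$) gives $\tilde{\Gamma}_\sigma(e_g^{\sigma})=1_g\delta_1$, which under the identification $A\cong A\delta_1$ of \eqref{e_AbiModSt} is exactly $1_g$. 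Thus the restriction $\bar\chi:=\tilde{\Gamma}_\sigma|_{B^{\sigma}}\colon B^{\sigma}\to A$ is an algebra homomorphism with $\bar\chi(e_g^{\sigma})=1_g$, and since $\chi$ and $\bar\chi\circ\zeta$ agree on the generators $e_g$ of $B$, they coincide. Hence $\ker\zeta\subseteq\ker\chi$, so $\chi(z)=0$ for $z\in\ker\zeta$, which gives $z\cdot(a\otimes_{A^e}m)=0$, and the reduction of the first paragraph upgrades this to $J\cdot(A\otimes_{A^e}M)=0$.

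The main obstacle is precisely this last step: one must verify that $\chi$ genuinely factors through $\zeta$, i.e.\ that the assignment $e_g^{\sigma}\mapsto 1_g$ extends to a \emph{well-defined} algebra homomorphism $B^{\sigma}\to A$. This cannot be asserted by hand, since $B^{\sigma}$ may carry relations among the idempotents $e_g^{\sigma}$ that are not visible from the generators; exhibiting the map as the restriction of the already-available homomorphism $\tilde{\Gamma}_\sigma$ is what guarantees its well-definedness. Everything else is routine bookkeeping with Lemma~\ref{l_teneq} and the module axioms.
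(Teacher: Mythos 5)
Your proof is correct, and it takes a genuinely different route from the paper's own argument. The paper factors $\zeta$ as $B \to B^{\sigma''} \stackrel{\iota}{\to} B^{\sigma}$ and pushes the $B$-action onto the \emph{$M$-factor} of $A\otimes_{A^e}M$: it computes $e_g^{\sigma''}\cdot(a\otimes_{A^e}m)=a\otimes_{A^e}\bigl((\iota(e_g^{\sigma''})\cdot(1_A\delta_1))\cdot m\cdot(\iota(e_g^{\sigma''})\cdot(1_A\delta_1))\bigr)$ and then invokes the resulting formula \eqref{e_iotaTensor} for every $w\in B^{\sigma''}$ to conclude that $\ker\iota$ (hence $\ker\zeta$) annihilates the module. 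You instead push the action onto the \emph{$A$-factor}, where by Lemma~\ref{l_teneq}(iv) it becomes left multiplication by $\chi(w)$, with $\chi\colon B\to A$, $e_g\mapsto 1_g$, furnished by the universal property of $B$, and you kill $\ker\zeta$ by factoring $\chi=\bar\chi\circ\zeta$ through $\bar\chi=\tilde{\Gamma}_\sigma|_{B^{\sigma}}$. The two well-definedness mechanisms are the same object in disguise (by \eqref{e_KparG-mod-ArxG} the paper's $\iota(w)\cdot(1_A\delta_1)$ is exactly $\tilde{\Gamma}_\sigma(\iota(w))$), but your variant has a concrete advantage: your key identity $z\cdot(a\otimes_{A^e}m)=\chi(z)a\otimes_{A^e}m$ is \emph{linear} in the acting element, so it passes from products of the generators to all of $B$ by linearity, whereas the paper's \eqref{e_iotaTensor} is quadratic in $w$ (the element $\iota(w)\cdot(1_A\delta_1)$ appears on both sides of $m$), so its asserted validity ``for all $w\in B^{\sigma''}$'' does not follow from the generator computation by linearity; as written it is only justified on products of the $e_g^{\sigma''}$, and repairing that step amounts precisely to an argument like yours. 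You also make explicit the reduction from the two-sided ideal $J$ to $\ker\zeta$, which the paper treats as obvious. In short, your proof is valid and, in this one respect, tighter than the paper's.
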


\begin{proof}
 It is enough  to show that $\ker \zeta $ annihilates $A \otimes_{A^e}M.$ Since $\zeta  : B \to B^\sigma$ can be factored as $B \to B^{\sigma''} \stackrel{\iota}{\to}  B^\sigma,$ with $e_g \mapsto e^{\sigma''}_g \overset{\iota}{\mapsto} e^{\sigma}_g,$ where $B \to B^{\sigma''}$ is the restriction of \eqref{eq:kparGepi} to $B,$ and $\iota$ is the map defined in \ref{r_iota}, it suffices to verify that $\ker \iota$ annihilates  $A \otimes_{A^e} M.$

    Observe that by Lemma \ref{l_teneq} $(iv)$ and the equation \eqref{e_KparG-modM} we have
    \begin{align*}
        e_g^{\sigma''} \cdot (a \otimes_{A^e} m) 
        &= a \otimes_{A^e} e_g^{\sigma''} \cdot m \\ 
        &= a \otimes_{A^e} [g]^{\sigma''}[g^{-1}]^{\sigma''} \cdot m \\ 
        &= a \otimes_{A^e} \xi(g) \xi(g^{-1})[g]^{\sigma'}[g^{-1}]^{\sigma'} \cdot m \\ 
        &= a \otimes_{A^e} \left(\xi(g)\xi(g^{-1})(1_g \delta_g)(1_{g^{-1}} \delta_{g^{-1}}) \cdot m \cdot (1_g \delta_g)(1_{g^{-1}} \delta_{g^{-1}})\right) \\ 
        &\overset{\eqref{e_xiformula}}{=} a \otimes_{A^e} \left(\xi(g)\xi(g)(1_g \delta_g)(1_{g^{-1}} \delta_{g^{-1}}) \cdot m \cdot (1_g \delta_g)(1_{g^{-1}} \delta_{g^{-1}})\right) \\ 
        &= a \otimes_{A^e} \left((1_g \delta_1) \cdot m \cdot (1_g \delta_1)\right) \\
        & \overset{\eqref{e_B_module_structure_of_AskewG}}{=} a \otimes_{A^e} \left((e_g^\sigma \cdot (1 \delta_1)) \cdot m \cdot (e_g^\sigma \cdot (1 \delta_1))\right)
    \end{align*}
    whence we obtain
    \begin{equation} \label{e_iotaTensor}
        w \cdot (a \otimes_{A^e} m) = a \otimes_{A^e} \left((\iota(w) \cdot (1_A\delta_1)) \cdot m \cdot (\iota(w) \cdot (1_A\delta_1))\right), \, \text{ for all } w \in B^{\sigma''}.
    \end{equation}
    Now let $z \in \ker \iota \subseteq B^{\sigma''}$, $a \in A$ and $m \in M$. By \eqref{e_iotaTensor} we have
    \[
        z \cdot (a \otimes_{A^e}m) = a \otimes_{A^e} \left((\iota(z) \cdot (1_A\delta_1)) \cdot m \cdot (\iota(z) \cdot (1_A\delta_1))\right) = 0.
    \]
   Therefore, the left $ \Omega_\sigma$-action, given by  $\overline{x} \cdot (a \otimes_{A^e}m) := x \cdot (a \otimes_{A^e}m),$  for all $\overline{x} \in \Omega_\sigma ,$ is well-defined.
\end{proof}
Analogously to the computations carried out in the proof of Lemma~\ref{l_AoMisOmegaModule}, it can be shown that $z \cdot m = (\iota(z) \cdot (1_A\delta_1)) \cdot m \cdot (\iota(z) \cdot (1_A\delta_1)) $ for all $z \in B^{\sigma''}$ and $m \in M$. As result, $\ker \zeta$ annihilates $M$, consequently, $M$ also has a structure of an  $\Omega_\sigma$-module (a fact which will not be used below).

It is easy to verify that $\Omega_\sigma$\textbf{-Mod} is a full subcategory of $\kappa_{par}^{\sigma''}G$\textbf{-Mod}. Therefore, Proposition \ref{l_AoMisOmegaModule} implies that we can restrict the target of the functor $F_1$ to the subcategory $\Omega_\sigma$\textbf{-Mod}. Hence, the Hochschild homology groups $H_q(A,M)$ are in fact $\Omega_\sigma$-modules via the  epimorphism \eqref{eq:kparGepi} and, consequently,   
$\Omega_\sigma  \otimes_{\kappa_{par}G} H_q(A,M) \cong \Omega_\sigma  \otimes_{\Omega_\sigma} H_q(A,M) \cong  H_q(A,M).$  Thus, thanks to 
Proposition~\ref{prop:TorSigma'Hkpar}, we can reformulate  Theorem \ref{t_SSHHTor} as follows.

\begin{theorem} \label{t_SSHH}
    Let $A$ be a $\kappa$-algebra, $\theta$ a unital partial action of a group $G$ on $A$ with   twist 
    $\sigma \in pm(G)$. Then, for any $A \ast_{\theta,\sigma} G$-bimodule $M$ there exists a first quadrant homological spectral sequence
    \[
        E^2_{p,q} = H_p^{par}(G,H_q(A,M) ) \Rightarrow H_{p+q}(A \ast_{\theta, \sigma} G, M). 
    \]
\end{theorem}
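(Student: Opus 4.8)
The plan is to obtain this statement as a direct reformulation of Theorem~\ref{t_SSHHTor}, replacing the $\operatorname{Tor}$ term appearing on the $E^2$ page by partial homology. First I would invoke Theorem~\ref{t_SSHHTor}, which already supplies a first quadrant homological spectral sequence with
\[
  E^2_{p,q} = \operatorname{Tor}^{\kappa_{par}^{\sigma''}G}_p(B^\sigma, H_q(A,M))
\]
converging to $H_{p+q}(\Lambda, M)$, where $\Lambda = A\ast_{\theta,\sigma}G$ by the notation convention fixed before the theorem. Since the abutment is then exactly $H_{p+q}(A\ast_{\theta,\sigma}G, M)$, the only work left is to identify the $E^2$ term with $H_p^{par}(G, H_q(A,M))$.

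Next I would apply Proposition~\ref{prop:TorSigma'Hkpar} to the left $\kappa_{par}^{\sigma''}G$-module $X = H_q(A,M)$, which yields
\[
  \operatorname{Tor}^{\kappa_{par}^{\sigma''}G}_p(B^\sigma, H_q(A,M)) \cong H_p^{par}\!\left(G, \Omega_\sigma \otimes_{\kappa_{par}G} H_q(A,M)\right).
\]
The final simplification rests on Lemma~\ref{l_AoMisOmegaModule}: since the epimorphism \eqref{eq:kparGepi} endows $A\otimes_{A^e}M$, and hence each derived object $H_q(A,M)=L_q F_1(M)$, with an $\Omega_\sigma$-module structure, the functor $F_1$ factors through the full subcategory $\Omega_\sigma\textbf{-Mod}$ of $\kappa_{par}^{\sigma''}G\textbf{-Mod}$. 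Consequently $\Omega_\sigma \otimes_{\kappa_{par}G} H_q(A,M) \cong \Omega_\sigma \otimes_{\Omega_\sigma} H_q(A,M) \cong H_q(A,M)$, which gives $E^2_{p,q} \cong H_p^{par}(G, H_q(A,M))$ and completes the argument.

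The one point that demands genuine care — and which I expect to be the main (though mild) obstacle — is the compatibility of the two module structures involved: one must check that the $\Omega_\sigma$-action on $H_q(A,M)$ produced by Lemma~\ref{l_AoMisOmegaModule} restricts along \eqref{eq:kparGepi} to precisely the $\kappa_{par}^{\sigma''}G$-action that is used when invoking Proposition~\ref{prop:TorSigma'Hkpar}. This is exactly what the realization of $\Omega_\sigma\textbf{-Mod}$ as a full subcategory of $\kappa_{par}^{\sigma''}G\textbf{-Mod}$ guarantees, so no real difficulty arises; the substantive content has already been absorbed into Theorem~\ref{t_SSHHTor} and Proposition~\ref{prop:TorSigma'Hkpar}, and the theorem is essentially their combination.
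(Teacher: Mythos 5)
Your proposal is correct and follows exactly the paper's own route: invoke Theorem~\ref{t_SSHHTor}, identify the $E^2$ term via Proposition~\ref{prop:TorSigma'Hkpar}, and then use Lemma~\ref{l_AoMisOmegaModule} together with the fact that $\Omega_\sigma$\textbf{-Mod} is a full subcategory of $\kappa_{par}^{\sigma''}G$\textbf{-Mod} to conclude $\Omega_\sigma \otimes_{\kappa_{par}G} H_q(A,M) \cong H_q(A,M)$. The compatibility point you flag at the end is precisely the one the paper also leans on, so nothing is missing.
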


\begin{example}
    If the algebra $A$ is separable, then the spectral sequence collapses, and we obtain the following isomorphism
    \[
        H_n^{par}(G,M/[A,M] ) \cong H_{n}(A \ast _\sigma G, M).
    \]
    
\end{example}

\begin{example}
Let $M$ be a $\kappa_{par}^{\sigma}G$-bimodule. Since, the enveloping algebra $B^e$ is generated by the set of idempotents $\{e_g \otimes e_h: g,h \in G\},$  then, by Proposition \ref{p_ComIdAlgIsVNR}, $B$ is flat as a $B^e$-module, so that the spectral sequence collapses in the $p$-axis. Hence, we obtain the following generalization of the MacLane isomorphism
\[
    H_n^{par}(G, M/ [B,M]) \cong H_n(\kappa_{par}^{\sigma}G, M).
\]
The above isomorphism generalizes the MacLane isomorphism (see for example \cite[7.4.2]{loday2013cyclic}) in the sense that if $M$ is a $G$-module. Then, 
\[
    H_{\bullet}(\kappa_{par}G,M)= H_{\bullet}(\kappa G,M) \text{ and } H_\bullet(B,M) = H_\bullet(\kappa,M).
\]
Indeed, notice that $\kappa G \cong \kappa_{par}G/ < 1 - e_g \mid g \in G>$, thus, by Proposition \ref{p_FlatSR2} we have that $\kappa G$ is flat as left (right) $\kappa_{par}G$-module. Therefore, by Lemma \ref{l_RSprojectiveIsPSeProjective} ${(\kappa G)^e}$ is projective as $(\kappa_{par}G)^e$-module. Then,
\begin{align*}
    H_\bullet(\kappa_{par}G, M) &= \operatorname{Tor}_\bullet^{(\kappa_{par}G)^e}(\kappa_{par}G, M) \\ 
    &= \operatorname{Tor}_\bullet^{(\kappa_{par}G)^e}(\kappa_{par}G,{(\kappa G)^e} \otimes_{(\kappa G)^e} M) \\ 
    &= \operatorname{Tor}_\bullet^{(\kappa G)^e}(\kappa_{par}G \otimes_{(\kappa_{par}G)^e} (\kappa G)^e ,M) \\ 
    &= \operatorname{Tor}_\bullet^{(\kappa G)^e}(\kappa G \otimes_{(\kappa G)^e} (\kappa G)^e ,M) \\
    &= \operatorname{Tor}_\bullet^{(\kappa G)^e}(\kappa G ,M) \\
    &= H_\bullet(\kappa G, M).
\end{align*}
Analogously, one shows that $H_\bullet(B,M) = H_\bullet(\kappa,M)$ since $\kappa$ is flat as left (right) $B$-module (via the morphism $e_g \in B\mapsto 1 \in \kappa$). Since, $H_0(\kappa,M)=M$ and $H_n(\kappa,M)=0$ for $n>0$, we obtain
\[
    H_{\bullet}(\kappa G,M) = H_{\bullet}(\kappa_{par}G,M) \cong H_\bullet^{par}(G, M) =  H_\bullet(G, M).
\]
\end{example}

\section{Cohomological spectral sequence}\label{sec:CohomolSpecSeq}

We will now construct the dual spectral sequence of the Theorem \ref{t_SSHH} for Hochschild's cohomology. Since the computations are similar to those done in the previous section and \cite{AlAlRePartialCohomology} (the difference is that we need to take into account the details about $\sigma$ as in the previous section) we will skip most of them.

As in the previous section, we will fix a twisted unital partial action $\theta$ of $G$ on a $\kappa$-algebra $A$ with twist $\sigma$, such that $\sigma(g,g^{-1})=1$ for all $g \in G \setminus G_2$. If $M$ is a $\Lambda$-bimodule, we can define a projective partial $\sigma'$-representation $\Gamma:G \to \operatorname{End}_\kappa(\hom_{A^e}(A,M))$ by  
\begin{equation}
    \Gamma_g(f)(a)= [g]^{\sigma'} \cdot f( [g^{-1}] \cdot a).
\end{equation}
Thus, by the universal property of $\kappa_{par}^{\sigma'}G$ we have that $\hom_{A^e}(A,M)$ is a $\kappa_{par}^{\sigma'}G$-module. Henceforth, by \eqref{e_sigmaprimeisomorphism} we have that $\hom_{A^e}(A,M)$ is $\kappa_{par}^{\sigma''}G$-module with the action given by 
\begin{equation}
    ([g]^{\sigma''} \cdot f)(a):= \xi(g)([g]^{\sigma'} \cdot f)(a)=\xi(g)[g]^{\sigma'} \cdot f( [g^{-1}] \cdot a).
\end{equation}
Hence, we can define the left exact functors
\begin{equation}
    W_1:= \hom_{A^e}(A,-):\Lambda^e\textbf{-Mod} \to \kappa_{par}^{\sigma''}G\textbf{-Mod},
\end{equation}
and
\begin{equation}
    W_2:=\hom_{\kappa_{par}^{\sigma''}G}(B^\sigma,-): \kappa_{par}^{\sigma''}G\textbf{-Mod} \to \kappa\textbf{-Mod}.
\end{equation}
The functor that determines the Hochschild cohomology of $\Lambda$ with coefficients in $M$ is 
\begin{equation}
    W:= \hom_{\Lambda^e}(\Lambda,-): \Lambda^e \textbf{-Mod} \to \kappa\textbf{-Mod}.
\end{equation}

\begin{remark} \label{r_Hochschild_cohomology}
    Recall that the $A^e$-module structure of $\Lambda^e$ is determined by the morphism of rings $A^e \to \Lambda^e$ induced by the natural inclusion of $A$ into $\Lambda$. Since, by Lemma~\ref{l_AskewP}  the $K$-algebra $\Lambda^e$ is projective as an $A^e$-module, it follows by \cite[Corollary 3.6A]{Lam1998LecturesOM} that any injective $\Lambda^e$-module is an injective $A^e$-module. Consequently,
     \[
         H^\bullet(A,M) \cong R^\bullet W_1(M),
     \]
     for any $\Lambda^e$-module $M$.
 \end{remark}

Let $X$ be a left $\kappa_{par}^{\sigma''}G$-module then by Proposition \ref{p_left_and_right_partial_modules_equivalence} and Proposition \ref{p_ArtG-Mod-XtensorArtG} we have that $X \otimes_{B^{\sigma''}} \Lambda$ is a $\Lambda^e$-module with actions
\begin{equation}
    a_g \delta_g \cdot (x \otimes_{B^{\sigma''}} c_t \delta_t) := [g]^{{\sigma''}} \cdot x \otimes_{ B^{\sigma''}} (a_g \delta_g)(c_t \delta_t)
\end{equation}
and
\begin{equation} 
    (x \otimes_{B^{\sigma''}} c_t \delta_t) \cdot a_g \delta_g := x \otimes_{ B^{\sigma''}} (c_t \delta_t)(a_g \delta_g).
\end{equation}

Now consider the following linear isomorphism
\[
    \gamma: \hom_{\kappa_{par}^{\sigma''}G}(X, \hom_{A^e}(A,M)) \to \hom_{\Lambda^e}(X \otimes_{B^{\sigma''}}\Lambda,M)
\]
such that for $f \in \hom_{\kappa_{par}^{\sigma''}G}(X, \hom_{A^e}(A,M))$
\[
    \gamma(f)(x \otimes_{B^{\sigma''}} a_g \delta_g)= f_x(1_A) \cdot a_g \delta_g,
\]
where $f_x:=f(x)$. The inverse map of $\gamma$ is
\[
    \gamma^{-1}: \hom_{\Lambda^e}(X \otimes_{B^{\sigma''}}\Lambda,M) \to \hom_{\kappa_{par}^{\sigma''}G}(X, \hom_{A^e}(A,M))
\]
such that for $f \in \hom_{\Lambda^e}(X \otimes_{B^{\sigma''}}\Lambda,M)$
\[
    \gamma^{-1}(f)_x(a)= f(x \otimes_{B^{\sigma''}} a \delta_1).
\]
The isomorphism $\gamma$ determines the natural isomorphisms given in the following proposition

\begin{proposition} \, \label{p_cohomology_functors_isomorphism}
    \begin{enumerate}[(i)]
        \item The functors
        \[
            \hom_{\kappa_{par}^{\sigma''}G}(-, \hom_{A^e}(A,M)): \kappa_{par}^{\sigma''}G\textbf{-Mod} \to \kappa\textbf{-Mod}  
        \]
        and 
        \[
            \hom_{\Lambda^e}(- \otimes_{B^{\sigma''}} \Lambda, M):\kappa_{par}^{\sigma''}G\textbf{-Mod} \to \kappa\textbf{-Mod}    
        \]
        are naturally isomorphic.
        \item For any left $\kappa_{par}^{\sigma''}G$-module $X$ the functors
        \[
            \hom_{\kappa_{par}^{\sigma''}G}(X, \hom_{A^e}(A,-)): \Lambda^e\textbf{-Mod} \to \kappa\textbf{-Mod}  
        \]
        and 
        \[
            \hom_{\Lambda^e}(X \otimes_{B^{\sigma''}} \Lambda, -): \Lambda^e\textbf{-Mod} \to \kappa\textbf{-Mod}    
        \]
        are naturally isomorphic.

    \end{enumerate}
\end{proposition}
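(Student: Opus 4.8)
The plan is to take the explicit linear bijection $\gamma$ and its inverse $\gamma^{-1}$ exhibited just above the statement and upgrade them to natural isomorphisms of functors. Since the underlying $\kappa$-linear bijection has already been written down, the genuine content splits into two tasks: first, that $\gamma$ and $\gamma^{-1}$ actually take values in the indicated Hom-spaces, i.e.\ that $\gamma(f)$ is $\Lambda^e$-linear whenever $f$ is $\kappa_{par}^{\sigma''}G$-linear and dually that $\gamma^{-1}(f)$ is $\kappa_{par}^{\sigma''}G$-linear whenever $f$ is $\Lambda^e$-linear; and second, that the family of maps $\gamma$ is natural in the relevant variable for each of the two statements. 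Both statements are governed by the same $\gamma$, so I would verify these properties once and then read off (i) by fixing $M$ and varying $X$, and (ii) by fixing $X$ and varying $M$.

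First I would check that the assignment $\gamma(f)(x \otimes_{B^{\sigma''}} a_g\delta_g)= f_x(1_A)\cdot a_g\delta_g$ descends from $X\times\Lambda$ to the balanced tensor product $X\otimes_{B^{\sigma''}}\Lambda$, using that $f_x = f(x)$ lies in $\hom_{A^e}(A,M)$ together with the $B^{\sigma''}$-action; this is the analogue of the balancedness verifications already performed in Proposition~\ref{p_inXB}. Next I would verify that $\gamma(f)$ respects the $\Lambda$-bimodule structure on $X\otimes_{B^{\sigma''}}\Lambda$ recalled before the proposition. Compatibility with the right $\Lambda$-action is immediate, since that action is induced by right multiplication in $\Lambda$. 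For the left action one computes, for a generator $x\otimes_{B^{\sigma''}} a_g\delta_g$,
\[
  \gamma(f)\bigl(a_h\delta_h\cdot(x\otimes_{B^{\sigma''}} a_g\delta_g)\bigr)
  =\gamma(f)\bigl([h]^{\sigma''}\cdot x\otimes_{B^{\sigma''}}(a_h\delta_h)(a_g\delta_g)\bigr),
\]
and then uses the $\kappa_{par}^{\sigma''}G$-linearity of $f$ together with the module structure $([h]^{\sigma''}\cdot f)(a)=\xi(h)[h]^{\sigma'}\cdot f([h^{-1}]\cdot a)$ on $\hom_{A^e}(A,M)$ to transport the action onto $f_x(1_A)$. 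Dually, I would check that $\gamma^{-1}(f)_x(a)=f(x\otimes_{B^{\sigma''}}a\delta_1)$ is $A^e$-linear in $a$ and that $x\mapsto\gamma^{-1}(f)_x$ is $\kappa_{par}^{\sigma''}G$-linear, the latter reducing to the same left-action computation. That $\gamma$ and $\gamma^{-1}$ are mutually inverse is then the short verification $\gamma^{-1}(\gamma(f))_x(a)=\gamma(f)(x\otimes_{B^{\sigma''}}a\delta_1)=f_x(1_A)\cdot a\delta_1=f_x(a)$, and symmetrically, using $A^e$-linearity of each $f_x$.

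Finally I would establish naturality. For part (i) I fix $M$ and let $h\colon X\to X'$ be a morphism of $\kappa_{par}^{\sigma''}G$-modules; the required square compares $\gamma$ for $X$ and $X'$ with the maps induced by $h^{*}$ and $(h\otimes_{B^{\sigma''}}\mathrm{id}_\Lambda)^{*}$, and commutativity follows by evaluating both composites on a generator $x'\otimes_{B^{\sigma''}}a_g\delta_g$. For part (ii) I fix $X$ and let $v\colon M\to M'$ be a $\Lambda$-bimodule map; here the square compares $\gamma$ for $M$ and $M'$ with the maps induced by $v$ on both sides, and commutativity is again seen by evaluation, since $v$ commutes with the $\Lambda$-actions. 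Once the module-homomorphism properties of the previous step are in place, each naturality square is a one-line evaluation.

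The main obstacle is the left-linearity identity in the second step: showing that $\gamma(f)$ intertwines the left $\Lambda$-action on $X\otimes_{B^{\sigma''}}\Lambda$ with that on $M$ requires passing the action through the $\kappa_{par}^{\sigma''}G$-module structure on $\hom_{A^e}(A,M)$, and this is precisely where the twist $\sigma$, the correction factors $\xi(g)$, and the passage between the symbols $[g]^{\sigma''}$, $[g]^{\sigma'}$ and $[g]^{\sigma}$ all enter, exactly as in the balancedness arguments of Proposition~\ref{p_inXB} and the bimodule computation of Lemma~\ref{l_AsmashHisoBAsmashH}. Once this single identity is secured, the well-definedness of $\gamma^{-1}$, the inverse relations, and naturality in both variables are routine and parallel the homological computations already carried out in Section~\ref{sec:HomolSpecSeq}.
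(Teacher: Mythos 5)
Your proposal is correct and follows essentially the same route as the paper: the paper's proof consists precisely of exhibiting the linear maps $\gamma$ and $\gamma^{-1}$ before the statement and asserting that they yield the natural isomorphisms, and your outline supplies exactly the verifications (balancedness over $B^{\sigma''}$, left/right $\Lambda$-linearity of $\gamma(f)$ via the $\xi(g)$--$[g]^{\sigma'}$--$[g]^{\sigma''}$ bookkeeping, $\kappa_{par}^{\sigma''}G$-linearity of $\gamma^{-1}(f)$, the inverse relations, and naturality in each variable separately) that the paper leaves implicit. In particular, you correctly identify the left-action computation as the only nontrivial step, and it does go through using $\xi(h)\sigma(h^{-1},h)=1$ together with the centrality of $f_x(1_A)$ as an element of the $A$-bimodule $M$.
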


\begin{corollary} \label{c_W2W1_W_iso}
    The functor $W_2 W_1$ is naturally $W$ isomorphic.
\end{corollary}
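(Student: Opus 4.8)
The plan is to mirror exactly the homological argument used for Corollary~\ref{c_F2F1F}, replacing tensor products by Hom-functors. Unwinding the definitions, for a $\Lambda$-bimodule $M$ one has $W_2W_1(M)=\hom_{\kappa_{par}^{\sigma''}G}(B^\sigma,\hom_{A^e}(A,M))$, while $W(M)=\hom_{\Lambda^e}(\Lambda,M)$, so the task reduces to producing a natural isomorphism between these two functors $\Lambda^e\textbf{-Mod}\to\kappa\textbf{-Mod}$.

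First I would invoke Proposition~\ref{p_cohomology_functors_isomorphism}(ii) with the fixed left $\kappa_{par}^{\sigma''}G$-module $X=B^\sigma$. This yields a natural isomorphism
\[
    \hom_{\kappa_{par}^{\sigma''}G}(B^\sigma,\hom_{A^e}(A,-)) \;\cong\; \hom_{\Lambda^e}(B^\sigma\otimes_{B^{\sigma''}}\Lambda,-)
\]
of functors on $\Lambda^e\textbf{-Mod}$, whose left-hand side is precisely $W_2W_1$.

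Next I would apply Lemma~\ref{l_AsmashHisoBAsmashH}, which gives an isomorphism $B^\sigma\otimes_{B^{\sigma''}}\Lambda\cong\Lambda$ of $\Lambda$-bimodules, that is, of $\Lambda^e$-modules. Since $\hom_{\Lambda^e}(-,-)$ is (contravariantly) functorial in its first variable, precomposition with this isomorphism induces a natural isomorphism $\hom_{\Lambda^e}(B^\sigma\otimes_{B^{\sigma''}}\Lambda,-)\cong\hom_{\Lambda^e}(\Lambda,-)=W$. Composing the two displayed natural isomorphisms gives $W_2W_1\cong W$, as desired. I expect no real obstacle here: all the substantive work has already been done in Proposition~\ref{p_cohomology_functors_isomorphism} and Lemma~\ref{l_AsmashHisoBAsmashH}, and the only point worth confirming is that the isomorphism of Lemma~\ref{l_AsmashHisoBAsmashH} is genuinely $\Lambda^e$-linear, so that precomposition with it respects the bimodule structure defining $W$.
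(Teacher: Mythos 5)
Your proposal is correct and is essentially identical to the paper's proof: both set $X=B^\sigma$ in Proposition~\ref{p_cohomology_functors_isomorphism}(ii) and then conclude via the $\Lambda$-bimodule isomorphism $B^\sigma\otimes_{B^{\sigma''}}\Lambda\cong\Lambda$ of Lemma~\ref{l_AsmashHisoBAsmashH}. Your added remark that one must check $\Lambda^e$-linearity of that isomorphism is precisely what Lemma~\ref{l_AsmashHisoBAsmashH} already provides, so nothing is missing.
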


\begin{proof}
   If we set $X=B^\sigma$ in $(ii)$ of Proposition \ref{p_cohomology_functors_isomorphism} we obtain that
   \[
    W_2W_1 = \hom_{\kappa_{par}^{\sigma''}G}(B^\sigma, \hom_{A^e}(A,-)) \cong \hom_{\Lambda^e}(B^\sigma \otimes_{B^{\sigma''}} \Lambda, -) \cong W,
   \]
   where the later isomorphism is due to Lemma~\ref{l_AsmashHisoBAsmashH}.
\end{proof}

Since we want to use \cite[Theorem 10.47]{RotmanAnInToHoAl} we have to show that $W_1$ sends injective objects to $W_2$-acyclic $\kappa_{par}^{\sigma''}G$-modules. Let $Q$ be a injective $\Lambda^e$-module. Let $P_\bullet \to B^\sigma$ a projective resolution of $B^\sigma$ in $\kappa_{par}^{\sigma''}G$-\textbf{Mod}, then, for all $n \geq 1$,
\begin{align*}
    R^n W_2(W_1(Q)) 
    &= \operatorname{Ext}^n_{\kappa_{par}^{\sigma''}G}(B^\sigma, \hom_{A^e}(A,Q))\\
    &= H^n(\hom_{\kappa_{par}^{\sigma''}G}(P_\bullet, \hom_{A^e}(A,Q))) \\
   \text{by Proposition \ref{p_cohomology_functors_isomorphism} (i)} &=H^n(\hom_{\Lambda^e}(P_\bullet \otimes_{B^{\sigma''}} \Lambda,Q))=0.
\end{align*}
The last equality holds since $- \otimes_{B^{\sigma''}}\Lambda$ is exact (by Corollary \ref{l_Bflat}) and $\hom_{\Lambda^e}(-,Q)$ is exact ($Q$ is injective as $\Lambda^e$-module). Therefore, the following lemma has been proved

\begin{lemma} \label{l_InjectiveAcyclic}
    $W_1$ sends injectives in $W_2$-acyclics.
\end{lemma}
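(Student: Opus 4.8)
The plan is to compute the right derived functors $R^n W_2$ applied to the object $W_1(Q)=\hom_{A^e}(A,Q)$ directly, via a projective resolution of $B^\sigma$, and to show they vanish for $n\geq 1$. First I would fix an injective $\Lambda^e$-module $Q$ together with a projective resolution $P_\bullet \to B^\sigma$ in $\kappa_{par}^{\sigma''}G\textbf{-Mod}$, so that by the very definition of $\operatorname{Ext}$,
\[
  R^n W_2\bigl(W_1(Q)\bigr)=\operatorname{Ext}^n_{\kappa_{par}^{\sigma''}G}\bigl(B^\sigma,\hom_{A^e}(A,Q)\bigr)=H^n\Bigl(\hom_{\kappa_{par}^{\sigma''}G}\bigl(P_\bullet,\hom_{A^e}(A,Q)\bigr)\Bigr).
\]

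The key move is to transport this cochain complex to the category of $\Lambda^e$-modules by means of the natural isomorphism of Proposition~\ref{p_cohomology_functors_isomorphism}(i), which identifies the functor $\hom_{\kappa_{par}^{\sigma''}G}\bigl(-,\hom_{A^e}(A,Q)\bigr)$ with $\hom_{\Lambda^e}\bigl(-\otimes_{B^{\sigma''}}\Lambda,\,Q\bigr)$ naturally in the first variable. Applying this term by term to $P_\bullet$ yields an isomorphism of cochain complexes, and hence
\[
  R^n W_2\bigl(W_1(Q)\bigr)\cong H^n\Bigl(\hom_{\Lambda^e}\bigl(P_\bullet\otimes_{B^{\sigma''}}\Lambda,\,Q\bigr)\Bigr).
\]

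Finally I would conclude vanishing in positive degrees from the exactness of the two functors now in play. On one side, $-\otimes_{B^{\sigma''}}\Lambda$ is exact by Corollary~\ref{l_Bflat} (the algebra $B^{\sigma''}$ being Von Neumann regular, every module over it is flat), so that $P_\bullet\otimes_{B^{\sigma''}}\Lambda$ is still an exact complex, resolving $B^\sigma\otimes_{B^{\sigma''}}\Lambda$. On the other side, since $Q$ is injective as a $\Lambda^e$-module, the contravariant functor $\hom_{\Lambda^e}(-,Q)$ is exact. Composing the two, the complex $\hom_{\Lambda^e}\bigl(P_\bullet\otimes_{B^{\sigma''}}\Lambda,\,Q\bigr)$ is exact in all positive degrees, whence $H^n=0$ for every $n\geq 1$, which is exactly the $W_2$-acyclicity of $W_1(Q)$.

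The point I expect to require care—and the reason Corollary~\ref{l_Bflat} is indispensable—is that one cannot argue by projectivity of the $P_i$ after tensoring, since $P_i\otimes_{B^{\sigma''}}\Lambda$ need not remain projective over $\Lambda^e$; the argument instead depends on preserving the \emph{exactness} of the whole resolution under $-\otimes_{B^{\sigma''}}\Lambda$. This is precisely what flatness of $\Lambda$ over $B^{\sigma''}$ guarantees, so beyond the two exactness statements no dimension-shifting or additional acyclicity input is needed.
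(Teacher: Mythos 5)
Your proposal is correct and follows exactly the paper's own argument: compute $R^nW_2(W_1(Q))$ as $\operatorname{Ext}^n_{\kappa_{par}^{\sigma''}G}(B^\sigma,\hom_{A^e}(A,Q))$ via a projective resolution $P_\bullet\to B^\sigma$, transport the cochain complex through Proposition~\ref{p_cohomology_functors_isomorphism}(i) to $\hom_{\Lambda^e}(P_\bullet\otimes_{B^{\sigma''}}\Lambda,Q)$, and conclude vanishing from exactness of $-\otimes_{B^{\sigma''}}\Lambda$ (Corollary~\ref{l_Bflat}) together with exactness of $\hom_{\Lambda^e}(-,Q)$ for injective $Q$. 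Your closing remark about flatness versus preservation of projectivity is a fair articulation of why the argument is structured this way, but it is commentary rather than a deviation from the paper's proof.
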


Let $X$ be a $\kappa_{par}^{\sigma''}G$-module. Analogously to Lemma \ref{B_Sigma_iso} we have that $\Omega_\sigma \otimes_{\kappa_{par}G} B \cong B^\sigma$ as left $\kappa_{par}^{\sigma''}G$-modules, thus  by \cite[Corollary 10.65]{RotmanAnInToHoAl} we obtain
\begin{align*}
    \operatorname{Ext}^n_{\kappa_{par}^{\sigma''}G}(B^\sigma, X)
    &= \operatorname{Ext}^n_{\kappa_{par}^{\sigma''}G}(\Omega_\sigma \otimes_{\kappa_{par}G} B, X) \\ 
    &= \operatorname{Ext}^n_{\kappa_{par}G}(B, \hom_{\kappa_{par}^{\sigma''}G}(\Omega_\sigma,X))
\end{align*}
In particular, if $X$ is a left $\Omega_\sigma$-module, then
\[
    \hom_{\kappa_{par}^{\sigma''}G}(\Omega_\sigma,X)=\hom_{\Omega_\sigma}(\Omega_\sigma,X)=X.
\]
Hence,
\begin{equation}
    \operatorname{Ext}^n_{\kappa_{par}^{\sigma''}G}(B^\sigma, X) = \operatorname{Ext}^n_{\kappa_{par}G}(B, X)= H^n_{par}(G,X).
\end{equation}

Analogously to Lemma \ref{l_AoMisOmegaModule} we have that $\hom_{A^e}(A,M)$ is an $\Omega_\sigma$-module. Therefore, the modules $R^n W_1(M)= H^n(A,M)$ are $\Omega_\sigma$-modules. Then, we obtain the following cohomological dual theorem of Theorem \ref{t_SSHH}

\begin{theorem} \label{t_SSCH}
    Let $A$ be a $\kappa$-algebra, $\theta$ a twisted partial action of a group $G$ on $A$, with twist $\sigma$. Then for any $A \ast_{\theta,\sigma} G$-bimodule $M$ there exists a third quadrant cohomological spectral sequence
    \[
        E_2^{p,q} = H^p_{par}(G,H^q(A,M) ) \Rightarrow H^{p+q}(A \ast_{\theta, \sigma} G, M). 
    \]
\end{theorem}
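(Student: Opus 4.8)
The plan is to obtain Theorem~\ref{t_SSCH} as a direct application of the cohomological Grothendieck spectral sequence \cite[Theorem 10.47]{RotmanAnInToHoAl} to the composite of the left exact functors $W = W_2 W_1$, dualizing the argument of Theorem~\ref{t_SSHH}. First I would record the three identifications of derived functors that are already available: $R^\bullet W_1(M) \cong H^\bullet(A,M)$ by Remark~\ref{r_Hochschild_cohomology}; $R^\bullet W(M) = H^\bullet(\Lambda, M)$ by the very definition of Hochschild cohomology; and $R^\bullet W_2(X) = \operatorname{Ext}^\bullet_{\kappa_{par}^{\sigma''}G}(B^\sigma, X)$. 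The hypotheses of the Grothendieck theorem are exactly the two facts established just above the statement: the natural isomorphism $W_2 W_1 \cong W$ of Corollary~\ref{c_W2W1_W_iso}, and the fact that $W_1$ carries injective $\Lambda^e$-modules to $W_2$-acyclic $\kappa_{par}^{\sigma''}G$-modules, which is Lemma~\ref{l_InjectiveAcyclic}. With these in place the theorem yields a cohomological spectral sequence
\[
    E_2^{p,q} = R^p W_2\bigl(R^q W_1(M)\bigr) \Rightarrow R^{p+q} W(M),
\]
that is, $E_2^{p,q} = \operatorname{Ext}^p_{\kappa_{par}^{\sigma''}G}\bigl(B^\sigma, H^q(A,M)\bigr) \Rightarrow H^{p+q}(\Lambda, M)$.

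The remaining task is to rewrite the $E_2$ term in terms of the partial cohomology of $G$. The coefficient module $H^q(A,M) = R^q W_1(M)$ is an $\Omega_\sigma$-module, exactly as $\hom_{A^e}(A,M)$ was shown to be in the cohomological analogue of Lemma~\ref{l_AoMisOmegaModule}. For any $\Omega_\sigma$-module $X$, the computation immediately preceding the theorem gives
\[
    \operatorname{Ext}^p_{\kappa_{par}^{\sigma''}G}(B^\sigma, X) = \operatorname{Ext}^p_{\kappa_{par}G}(B, X) = H^p_{par}(G, X),
\]
using the isomorphism $\Omega_\sigma \otimes_{\kappa_{par}G} B \cong B^\sigma$ of left $\kappa_{par}^{\sigma''}G$-modules together with \cite[Corollary 10.65]{RotmanAnInToHoAl} and the fact that $\hom_{\kappa_{par}^{\sigma''}G}(\Omega_\sigma, X) \cong X$. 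Substituting $X = H^q(A,M)$ transforms the $E_2$ page into $H^p_{par}(G, H^q(A,M))$, recovering $\Lambda = A \ast_{\theta,\sigma} G$ in the abutment and yielding the asserted third-quadrant spectral sequence.

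The main delicacy — really the only point requiring care — is ensuring that the $\operatorname{Ext}$-to-partial-cohomology identification is legitimately applied at the $E_2$ stage, which demands that each derived functor $R^q W_1(M)$ genuinely lands in the full subcategory $\Omega_\sigma\textbf{-Mod}$ of $\kappa_{par}^{\sigma''}G\textbf{-Mod}$, so that $\hom_{\kappa_{par}^{\sigma''}G}(\Omega_\sigma,-)$ acts as the identity on it. I would secure this by dualizing Lemma~\ref{l_AoMisOmegaModule}: the same $e_g^{\sigma''}$-computation, carried out on the $\hom$-side, shows that $\ker\zeta$ annihilates $\hom_{A^e}(A,M)$, so $W_1$ factors through $\Omega_\sigma\textbf{-Mod}$; since this is a full subcategory, the $\Omega_\sigma$-structure is inherited by the right derived functors $R^q W_1(M)$. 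Beyond this, everything is a routine transcription of Section~\ref{sec:HomolSpecSeq} under the dualities $\operatorname{Tor}\leftrightarrow\operatorname{Ext}$, tensor~$\leftrightarrow$~Hom, right exact~$\leftrightarrow$~left exact, and projective~$\leftrightarrow$~injective resolutions, so I would invoke these passages rather than repeat the calculations.
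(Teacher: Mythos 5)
Your proposal is correct and follows essentially the same route as the paper's proof: it applies the cohomological Grothendieck spectral sequence to $W \cong W_2W_1$ via Corollary~\ref{c_W2W1_W_iso} and Lemma~\ref{l_InjectiveAcyclic}, and then identifies $R^pW_2$ on the $E_2$ page with $H^p_{par}(G,-)$ through the $\Omega_\sigma$-module structure and the isomorphism $\Omega_\sigma \otimes_{\kappa_{par}G} B \cong B^\sigma$. The delicacy you single out---that $R^qW_1(M)=H^q(A,M)$ must genuinely lie in $\Omega_\sigma$\textbf{-Mod}---is resolved in the paper exactly as you propose, by dualizing Lemma~\ref{l_AoMisOmegaModule} to show $\ker\zeta$ annihilates $\hom_{A^e}(A,M)$ and passing this structure to the derived functors.
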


\begin{proof}
    Recalling Remark~\ref{r_Hochschild_cohomology} we know that 
    \[ 
       R^q W_1(-)=H^q(A,-),\,\, R^p G_2 (-) = H_{par}^p(G,-) \text{ and }R^{p+q}W(-)=H^{p+q}(A \ast_{\theta,\sigma} G, -). 
    \]
    Finally, by Corollary \ref{c_W2W1_W_iso}, Lemma \ref{l_InjectiveAcyclic} and \cite[Theorem 10.47]{RotmanAnInToHoAl} we obtain the desired spectral sequence.
\end{proof}

\section*{Acknowledgments}

The first named author was partially supported by 
Funda\c c\~ao de Amparo \`a Pesquisa do Estado de S\~ao Paulo (Fapesp), process n°:  2020/16594-0, and by  Conselho Nacional de Desenvolvimento Cient\'{\i}fico e Tecnol{\'o}gico (CNPq), process n°: 312683/2021-9. The second named author was supported by Fapesp, process n°: 2022/12963-7.


\bibliographystyle{abbrv}
\bibliography{azu}

\end{document}